\newtheorem{tm}{Theorem}[section]
\newtheorem{lm}[tm]{Lemma}
\newtheorem{co}[tm]{Corollary}
\newtheorem{re}[tm]{Remark}
\newtheorem{exm}[tm]{Example}
\newtheorem{pr}[tm]{Proposition}
\newcommand{\dd}{\rm{d}}
\newcommand{\ee}{\rm{e}}
\newcommand{\subscripts}[3]{%
  \@mathmeasure\z@\displaystyle{#2}%
  \global\setbox\@ne\vbox to\ht\z@{}\dp\@ne\dp\z@
  \setbox\tw@\box\@ne
  \@mathmeasure4\displaystyle{\copy\tw@_{#1}}%
  \@mathmeasure6\displaystyle{{#2}_{#3}}%
  \dimen@-\wd6 \advance\dimen@\wd4 \advance\dimen@\wd\z@
  \hbox to\dimen@{}\mathop{\kern-\dimen@\box4\box6}%
}
\newcommand{\III}{{\vert \kern-.10em \vert \kern-.10em \vert}}
\def\supp{\mathop{\rm supp}\nolimits}
\begin{document}
\setlength{\baselineskip}
{15.5pt}
%
\allowdisplaybreaks

\title{
A graph discretized approximation of 
semigroups \\
for diffusion with
drift and killing \\
on a complete 
Riemannian manifold 
}
\author{\Large
{Satoshi Ishiwata\hspace{1mm}\footnote{Department of Mathematical Sciences, Faculty of Science, Yamagata University,
1-4-12, Kojirakawa, Yamagata 990-8560, Japan
(e-mail: {\tt ishiwata@sci.kj.yamagata-u.ac.jp})}
~and Hiroshi Kawabi\hspace{1mm}\footnote{Department of Mathematics, Hiyoshi Campus, Keio University,
4-1-1, Hiyoshi, Kohoku-ku, Yokohama 223-8521, Japan (e-mail: {\tt{kawabi@keio.jp}})}
}}

\maketitle
\begin{abstract}
In the present paper, 
we prove that the 
$C_{0}$-semigroup generated by a 
Schr\"odinger operator with drift on a complete Riemannian manifold 
is approximated by the discrete semigroups associated with a family of
discrete time 
random walks with killing in a flow 
on a sequence of proximity graphs, which are
constructed by partitions of the manifold. 
Furthermore, when the manifold is compact, we also obtain a quantitative error 
estimate of the convergence. Finally, we 
give examples of the partition of the manifold and 
the drift term 
on two typical manifolds:
Euclidean spaces and model manifolds.
\vspace{2mm} \\
{\bf{2020 Mathematics Subject Classification:}}~47D08, 58J65, 05C81.
\vspace{2mm} \\
{\bf{Keywords:}}
Riemannian manifold, 
graph discretization, proximity graph,
drifted Schr{\"o}dinger semigroup, 
random walk in a flow with killing.
%
\end{abstract}
\section{Introduction}
Let ${M}=({M}, g)$ be a smooth $n$-dimensional 
Riemannian manifold and $\mathfrak m$ be the Riemannian volume measure on $M$.
We assume that $M$ is geodesically complete and connected,
but not necessarily compact.
We denote by $C^{\infty}_{c}(M)$ and $C_{0}(M)$ the spaces of smooth functions on $M$
with compact support
and continuous functions on $M$ vanishing at infinity, respectively.
Let $b$ be a smooth vector field on $M$, and 
$V$ be a non-negative smooth function defined on $M$.
We consider a 
{\it{drifted Schr\"odinger operator}}
${\mathcal A}={\mathcal A}_{V}$ 
having the form
\begin{equation*}
{\mathcal A}f(x)=-\Delta f(x)-(bf)(x)+V(x)f(x),\quad x\in M,~f\in C^{\infty}_{c}(M), 
\end{equation*} 
where $\Delta$ is the negative
Laplacian on $M$.
Since $V$ is non-negative, 
the maximal principle of 
the Laplacian $\Delta$ implies that
$(-{\mathcal A}, C^{\infty}_{c}(M))$ is dissipative, thus is closable in $C_{0}(M)$
(see \cite[Lemma 2.1]{See84} for details).
Moreover 
by the Lumer-Phillips theorem, 
the closure of $(-{\mathcal A}, C^{\infty}_{c}(M))$ generates a contraction $C_{0}$-semigroup
$\{{\ee}^{-t{\mathcal A}} \}_{t \geq 0}$
in $C_{0}(M)$ under the
the condition
\begin{description}
\item[\bf{(A):}]\, 
$(\lambda+{\mathcal A})(C^{\infty}_{c}(M))$ is dense in $C_{0}(M)$ 
for some $\lambda>0$.
\end{description}
See e.g., \cite{Paz85, Ebe99}
for the definition of dissipativity and the Lumer-Philips theorem.

Many problems in analysis on Riemannian manifolds naturally lead to the study of the 
{\it{drifted Schr\"odinger semigroup}} $\{ {\ee}^{-t \mathcal{A}}\}_{t\geq 0}$ and hence 
it is an important problem to find an efficient approximation scheme of ${\ee}^{-t \mathcal{A}} f$ for a given $f \in C_0(M)$.
When $M=\mathbb{R}^n$ and $\mathcal{A}=-\Delta$, 
we have the explicit formula for the heat kernel of the Laplacian $\Delta$. Thus 
the integral representation formula
\begin{equation*}
{\ee}^{t\Delta} f (x)=\int_{\mathbb{R}^n} \frac{1}{(4\pi t)^{n/2}} \exp \big( -\frac{\vert x-y\vert_{\mathbb R^{n}}^2}{4t} 
\big) f(y) {\dd}y, \quad f\in C_{0}(\mathbb R^{n})
\end{equation*}
enables us to study discrete approximations for ${\ee}^{t \Delta} f$ by 
applying 
numerical integration 
methods.
However, such an exact formula of the kernel function of ${\ee}^{-t\mathcal{A}}$ 
for general $M$ and $\mathcal{A}$ 
has been proved only in limited situations.
When $M$ is compact, it is known that the spectrum of the positive Laplacian $-\Delta$ 
consists of eigenvalues 
\begin{equation*}
0=\lambda_0 <\lambda_1 \leq \lambda_2 \leq \cdots
\end{equation*}
and the corresponding (orthonormal) eigenfunctions $\{\phi_i \}_{i=0}^{\infty}$ are smooth on $M$.
Then the heat semigroup $\{ {\ee}^{t \Delta} \}_{t\geq 0}$ can be written by
\begin{equation*}
{\ee}^{t \Delta} f (x)=\sum_{i=0}^\infty e^{-t \lambda_i}
\phi_i (x)
\int_{M} f(y) \phi_{i}(y) {\dd}{\mathfrak m}(y).
\end{equation*}
By this spectral expansion formula, we see that the 
data $\{ \lambda_i\}_{i=0}^\infty$ and $\{ \phi_i\}_{i=0}^\infty$ have an important role
to approximate ${\ee}^{t \Delta}f$.
Needless to say, on compact manifolds, eigenvalues and eigenfunctions themselves 
are interesting objects. 
See, for instance, \cite{Cha84, Ros97, Fuj95, Ots03}.
It is worth mentioning that this kind of argument 
has been received a lot of attention 
in the study of
\textit{manifold learning}.
See \cite{BN03, BN08, BIK14, Tew17, SW17, Ain21} and references therein
for recent results. 
As a matter of fact, these papers motivated us to write the present paper.
In contrast to the above situation, such an expansion for the drifted Schr\"odinger 
semigroup $\{{\ee}^{-t \mathcal{A}}\}_{t\geq 0}$ on a non-compact manifold $M$ 
has been less-developed in general.
\vspace{1mm}

On the other hand, probability theory
provides us a functional integral
point of view to the studies of Schr\"odinger operators on Riemannian manifolds.
We return to the case when the manifold $M$ is 
not necessarily compact and 
consider a diffusion process ${\bf X}=(x_{t}, {\mathbb P}_{x})$ starting from $x\in M$, which is 
generated by $\Delta+b$.
Thanks to 
smoothness of $M$ and $b$, we can construct this diffusion 
process up to the explosion time $\zeta(x):=\inf \{t>0; x_{t}\notin M \}$ 
by solving a stochastic differential equation on $M$. Obviously,
$\zeta(x)=\infty$ for all $x\in M$ when $M$ is compact. 
The diffusion process $\bf X$ 
gives a probabilistic representation for 
the semigroup $\{{\ee}^{-t \mathcal{A}}\}_{t\geq 0}$. More precisely, by 
the Feynman-Kac formula, we have
\begin{equation}
{\ee}^{-t{\cal A}}
f(x)={\mathbb E}^{{\mathbb P}_{x}} \Big[ \exp \Big(\hspace{-0.5mm} -\hspace{-0.5mm}\int_{0}^{t} V(x_{s})\, {\dd}s \Big) f(x_{t}); 
t<\zeta (x) \Big],~~t\geq 0,~x\in M
\label{FK}
\end{equation}
for all $f\in C_{0}(M)$. 
See e.g., \cite{IW89, Gun17} for details. 
In view of Feynman's original path integral approach to quantum physics,
there are many interests of
finite dimensional integral approximations of the 
Feynman-Kac type functional integral (\ref{FK}) over Riemannian manifolds
in many branches of mathematics such as functional analysis, geometric analysis and probability theory.
Actually, this topic has been studied intensively and extensively by many authors. 
See e.g., \cite{ET81, IM85, Ino86, AD99, BP08, MMRS22} and references therein for further related results.
We should mention that, in the case $V=0$, the central limit theorem (CLT, in short) for
geodesic random walks on 
Riemannian manifolds also gives a finite dimensional integral approximation of
 (\ref{FK}). 
For early work in this direction, see e.g., \cite{Jor75, Pin76, Sun81, Blu84}.
\vspace{2mm}

Under these circumstances of finite dimensional integral approximations of the
functional integral (\ref{FK}) together with developments 
of numerical analysis and manifold learning theory, 
it is fundamental and important to 
study a discrete approximation scheme 
for the drifted Schr\"odinger semigroup $\{{\ee}^{-t \mathcal{A}} \}_{t \geq 0}$.
To tackle this problem, 
in the present paper, we introduce a family of
discrete time
{\it{random walks in the flow}} generated by the drift $b$ with {\it{killing}}
on a sequence of proximity graphs, which are
constructed by partitions cutting 
the Riemannian manifold $M$ into small pieces.
Due to the effect of the drift $b$, these random walks are not necessarily
symmetric in general.
This makes our problem difficult and at the same time interesting. 
As a main result, under condition {\bf{(A)}},
we prove that $\{{\ee}^{-t \mathcal{A}} \}_{t \geq 0}$
in $C_{0}(M)$
is approximated by the discrete semigroups
generated by
the family of random walks 
with a suitable scale change (see Theorem \ref{Main-1} for the precise statement).
Furthermore, when $M$ is compact, we also obtain
a quantitative error estimate of the convergence (see Theorem \ref{Main-3}).
As we shall state in Corollary \ref{Main-2}, these results give us a finite dimensional 
{\it{summation}} approximation of the Feynman-Kac type functional integral (\ref{FK})
which would be a theoretical basis of a new numerical method.
\vspace{2mm}

We note that it is possible to study our problem in $L^{p}(\mathfrak m)$-setting
in parallel to $C_{0}(M)$-setting,
where $L^{p}({\mathfrak m})$,
$1<p<\infty$, denotes
the usual real $L^{p}$-space on $M$ with respect to the volume measure ${\mathfrak m}$
equipped with the $L^{p}$-norm 
$\Vert f \Vert_{L^{p}({\mathfrak m})}=\big( \int_{M} \vert f(x) \vert^{p} {\dd} {\mathfrak m}(x) \big)^{1/p}$.
In fact,
under
two conditions
\begin{description}
\item[\bf{(A1)$_{\bm{p}}$:}]~$(-\lambda-{\mathcal A}, C^{\infty}_{c}(M))$ is 
dissipative
in $L^{p}({\mathfrak m})$ for some $\lambda\geq 0$;
\item[\bf{(A2)$_{\bm{p}}$:}]~$(\lambda'+{\mathcal A})(C^{\infty}_{c}(M))$ is dense in $L^{p}({\mathfrak m})$ 
for some $\lambda'>\lambda$,
\end{description}
the closure of $(-{\mathcal A}, C^{\infty}_{c}(M))$ generates a 
$C_{0}$-semigroup $\{{\ee}^{-t{\mathcal A}} \}_{t \geq 0}$ in $L^{p}({\mathfrak m})$,
and we see that most of the arguments in the proof of the above results apply 
$L^{p}(\mathfrak m)$-setting (see also Theorem \ref{Main-1} and 
Corollary \ref{Main-2} for details). 
Note that the semigroup $\{{\ee}^{-t{\mathcal A}} \}_{t \geq 0}$ is not necessarily contractive in
$L^{p}({\mathfrak m})$.
\vspace{1mm} 

We now mention
related works. 
Burago-Ivanov-Kurylev \cite{BIK14} studied a discrete approximation of the Laplacian 
on a compact Riemannian manifold
based on partitions of the manifold.
Although our framework is influenced by this paper, 
we need to extend their argument slightly to the case where 
the underlying manifold is non-compact.
Chen-Kim-Kumagai \cite{CKK13} also studied a similar discrete approximation for a large 
class of symmetric jump processes 
on metric measure spaces satisfying
the volume doubling condition. 
(Note that the volume doubling condition does not necessarily 
hold for general non-compact complete Riemannian manifolds.)
In a series of papers \cite{IKK17, IKN20, IKN21}, we studied CLTs for
non-symmetric random walks on
infinite graphs having a periodic structure such as
crystal lattices and nilpotent covering graphs.
Introducing a family
of random walks which interpolates between the original non-symmetric 
random walk with the symmetrized one, as the limit, we captured the Brownian motion with 
a constant drift on a suitable space in which the graph is realized.
However, this kind of interpolation is different from the family 
of random walks in a flow introduced in the present paper.
It is also worth mentioning
that Kotani \cite{Kot02}
studied a semigroup CLT for 
a generalized Harper operator on a crystal lattice
in $L^{2}$-setting,
and obtained a (uniform) magnetic 
Schr{\"o}dinger semigroup on an Euclidean space as the limit.
\vspace{1mm}

The rest of 
the present paper is organized as follows: In Sections 2.2 and 2.3, we introduce our framework of
the graph discretization of the manifold $M$ and 
a random walk in a flow with killing on the graph. In Section 2.4, 
we state main results (Theorems \ref{Main-1}, \ref{Main-3} and 
Corollaries \ref{Main-2}, \ref{co-weighted}).
In Section 3, we devote ourselves to prove main results. In particular, in 
Theorem \ref{pointwise generator convergence}, we obtain 
a convergence rate of the generators of the 
family of random walks under the suitable scale change 
mentioned above. 
Combining this theorem
with Trotter's approximation theorem (cf. \cite{Tro58, Kur69})
and a recent result on its convergence rate (cf. \cite{Nam22}), we obtain Theorems 
\ref{Main-1} and  \ref{Main-3}, respectively.
In Section 4, we discuss sufficient conditions for conditions {\bf{(A)}}, {\bf{(A1)$_{\bm{p}}$}}
and {\bf{(A2)$_{\bm{p}}$}}.
Finally in Section 5, we give examples of the partition of the manifold and the drift on
two typical manifolds: Euclidean spaces and model manifolds.
\section{Framework and main results}
\subsection{Notations}
We introduce some notations to describe our results in detail.
Let $C(M)$ be the space of all real-valued continuous function on
the Riemannian manifold $M$.
Let 
$C_{0}(M)$ be the subspace of
$C(M)$ vanishing at infinity, i.e., $\lim_{d(x,o) \to \infty}f(x)=0$, where 
$d(x,o)$ denotes
the geodesic distance between $x\in M$ and a base point $o\in M$.
This is a Banach space endowed with the uniform convergence topology
$\Vert f \Vert_{\infty}:=\sup_{x\in M}\vert f(x) \vert$.
Let $C^{\infty}(M)$ be the space of all real-valued smooth functions on $M$.
Obviously, if $M$ is compact, $C_{0}(M)$ and 
$C_{c}^{\infty}(M)$ coincide with $C(M)$ and $C^{\infty}(M)$, respectively.

Let $\partial$ be a {\it{cemetery 
point}} added to 
$M$ so that 
$M_{\partial}:=M \cup \{\partial \}$ is the one-point compactification of $M$.
As usual, we may regard $\partial$ as the point at infinity and  
${\bf{X}}=(x_{t}, {\mathbb P}_{x})$ as a continuous Markov process 
on $M_{\partial}$.
Note that we may rewrite the explosion time as  $\zeta(x)=\inf \{t>0; x_{t}=\partial \}$. 
Since $M$ is complete, $x\to \partial$ is equivalent to
$d(x,o) \to \infty$, 
and thus
$C_{0}(M)$ may be identified with the space of all continuous functions 
$f: M_{\partial} \to \mathbb R$
satisfying $f(\partial)=0$. 
For simplicity of notation, we write $\bm{\partial}=\{ \partial \}$.

Throughout the present paper, we use $c$,
$C$
to denote positive constants which may change from line to line. We also use the Landau symbols 
$O(\cdot)$ and $o(\cdot)$. If the dependence of $C$, $O(\cdot)$ and 
$o(\cdot)$ are significant, we denote them like $C(N)$, $O_{N}(\cdot)$ and $o_{N}(\cdot)$, 
respectively. For $a\in \mathbb R$, we denote by $\lfloor a \rfloor$ (resp. $\lceil a \rceil$)
the greatest integer less than or equal to $a$ (resp. the least integer greater than or equal to $a$).
Unless otherwise specified, we use the Einstein summation convention, 
which means that an index variable that appears twice in an expression is 
implicitly summed over all its possible values.
\subsection{Graph discretization}
In what follows, we present a framework of a graph discretization of the Riemannian manifold $M$. 
For $x \in M$ and $r>0$, let $B_{r}(x)$ be the open geodesic ball of radius
$r>0 $ centered at $x \in M$. 
For any subset $A$ of $ M$, we denote by 
$U_{r}(A)$ 
the $r$-neighborhood of $A $, i.e., 
\begin{equation*}
U_r(A)=\cup_{x \in A} B_r (x).
\end{equation*}
 For two subsets $A$ and $B$ of 
${M}$, we define the Hausdorff distance $d_{H}(A,B)$
between $A$ and $B$ by
$$ d_{H}(A,B):=\inf \{ r>0;~A\subset U_{r}(B),~B \subset U_{r}(A) \}.$$
See \cite[Section 7.3]{BBI01} for details of the Hausdorff distance.
Moreover, we set $$d_{H}(\{\partial \}, A):=\infty, \quad A\subset M.$$

A countable collection $\mathbb{X} $ 
of connected measurable subsets of $M$ with finite measures
is called 
a \textit{partition} of ${M}$
if 
\begin{equation}
M=\bigcup_{X \in {\mathbb X}} X, \quad 
X   \cap Y =\emptyset~~(X,Y \in \mathbb{X}, X \neq Y).
\label{bunkatsu-disjoint}
\end{equation}
Throughout the present paper, we assume
\begin{description}
\item[\bf{(B):}]~$\sharp  \hspace{0.5mm} \big \{ X \in \mathbb{X} ; X \subset B_r (x) \big \}<\infty$
for all $x\in M$ and $r>0$;
\item[\bf{(C):}]~$\vert {\mathbb X} \vert:= \sup_{X \in \mathbb{X}} {\rm diam} (X) <\infty.$
\end{description}For given $x\in M$,
we denote by $X (x)$ the unique element $X\in {\mathbb X}$ containing $x$.
We also define $X(\partial)={\bm{\partial}}$.
For each $X \in \mathbb{X}$, we take a reference point $ x \in X$ and write $x(X)$. 
Set $\mathscr X =\{ x(X) \}_{X \in \mathbb{X}}$. 
It follows from (\ref{bunkatsu-disjoint}) that $x(X) \neq x(Y)$ for $X\neq Y$.

If the manifold $M$ is compact,  condition {\bf{(B)}} implies that such a partition ${\mathbb X}$ is a finite set.
It is always possible to construct a partition 
$\mathbb X$ with {\bf{(B)}} and {\bf{(C)}} in the following manner:
For fixed a constant $\varepsilon >0$, there exists a {\it{maximal 
$\varepsilon$-separated}} subset ${\mathscr X}=\{ x_{i} \}_{i\in \mathbb N}$ 
of $M$. 
Note 
$B_{\varepsilon/2}(x_{i}) \cap 
B_{\varepsilon/2}(x_{j})=\emptyset$, $i\neq j$, and 
$\cup_{i=1}^{\infty} B_{\varepsilon}(x_{i})=M$
(see e.g.,  \cite{Kan85, Kan86} for details).
We set
$$ X_{i}:=B_{\varepsilon} (x_{i}) \setminus \cup_{j<i} B_{\varepsilon}(x_{j}),
\quad i\in {\mathbb N},$$ 
and divide each $X_{i}$ into a finite number of connected components $X_{i}^{1}, \ldots, X_{i}^{k(i)}$.
Obviously, $\mathbb X_{{\mathscr X}}=\{X_{i}^{k}; k=1,\ldots, k(i), i\in \mathbb N \}$ is a partition of $M$ 
satisfying {\bf{(C)}} because  $\vert {\mathbb X}_{\mathscr X} \vert \leq 2\varepsilon$. Besides,
since ${\mathscr X}$ is $\varepsilon$-separated and $M$ is complete and smooth, 
we easily see that $B_{r}(x) \cap {\mathscr X}$ is a finite set 
for all $x\in M$ and $r>0$. This means that ${\mathbb X}_{\mathscr X}$ satisfies {\bf{(B)}}.
As another example of the partition of $M$, we may 
consider the {\it{Voronoi decomposition}} of $M$ with respect to ${\mathscr X}$, which is
defined by $$ X_{1}:={\widetilde{X_{1}}},\quad X_{i}:={\widetilde{X_{i}}}\setminus \cup_{j=1}^{i-1} X_{j},~~i=2,3,\ldots,
$$
where
$${\widetilde{X_{i}}}:=\big \{ y\in M;~d(x_{i},y) \leq d(x_{j}, y), ~j\in {\mathbb N}, i\neq j \big\}, \quad
i\in {\mathbb N}.$$

For given $\rho>0$, let $\mathbb{X}$ be a partition of $M$ with ${\bf{(B)}}$ and ${\bf{(C)}}$
satisfying $\vert \mathbb X \vert < \rho$. 
We say that $X$ and $Y$ in $\mathbb{X}$ are \textit{adjacent} if $d_H(X,Y)<\rho$ and write $X\sim_\rho Y$. 
Then we define an oriented graph $\mathbb{G}(\mathbb{X}, \rho)=(\mathbb{V}, \mathbb{E})$ called 
a ($\rho$-) {\it{proximity graph}} of the manifold $M$ by $\mathbb{V}: ={\mathbb X}$ and 
$$\mathbb{E}:=\{ (X, Y)\in \mathbb{X}\times \mathbb{X}; X \sim_\rho Y \}.$$ 
Since $M$ is connected, ${\mathbb G}({\mathbb X}, \rho)$ is also connected, and
{\bf{(B)}} implies that ${\mathbb G}({\mathbb X}, \rho)$ is
locally finite, that is, for any $X\in \mathbb{X}$, 
 the ($\rho$-)neighborhood of  $X$ defined by 
$$N_{\rho}(X):=
\{ 
Z \in \mathbb{X} ; Z \sim_\rho X 
\}
$$
is a finite set. We should mention that 
${\mathbb G}({\mathbb X}, \rho)$ is uniform, i.e.,  
$\sup_{X\in {\mathbb X}} \sharp N_{\rho}(X)<\infty$, 
provided that the Ricci curvature of $M$ is bounded from below.
See \cite[Lemma 2.3]{Kan85} for details.

For later purpose, we introduce a 
graph ${\mathbb G}_{\partial}({\mathbb X}, \rho)
=({\mathbb V}_{\partial}, {\mathbb E}_{\partial})$ 
constructed by the sum of 
the proximity graph $\mathbb G({\mathbb X}, \rho)$ and the point $ \partial $ mentioned above. 
To be precise,
${\mathbb V}_{\partial}:={\mathbb V} \cup 
{\bm{\partial}}$
and
$${\mathbb E}_{\partial}:={\mathbb E} \cup \{ (X, 
{\bm{\partial}}),  ({\bm{\partial}}, X);  X \in \mathbb{X} \}
\cup \{ ({\bm{\partial}}, {\bm{\partial}}) \}.$$ 
We denote by $C_{0}({\mathbb G}_{\partial} ({\mathbb X}, \rho))$ the space of all
bounded functions $F: {\mathbb X} \cup \{ {\bm{\partial}} \}
\to \mathbb R$ with $F({\bm{\partial}})=0$ endowed with  
$$\Vert F \Vert_{C_{0}({\mathbb G}_{\partial}({\mathbb X}, \rho))}:=\sup_{X \in {\mathbb X}} \vert F(X) \vert,$$ 
and by $L^{p}({\mathbb G}_{\partial} ({\mathbb X}, \rho))$, $1<p<\infty $, the set of all functions 
$F: {\mathbb X}  \cup 
\{ {\bm{\partial}} \}
\to \mathbb R$ satisfying $F({\bm{\partial}})=0$ and 
$$ \Vert F \Vert_{L^{p}({\mathbb G}_{\partial} ({\mathbb X}, \rho))}:=
\Big( \sum_{X \in {\mathbb X}} \vert F(X) \vert^{p} {\mathfrak m}(X) \Big)^{1/p}<\infty.$$ 

For a given partition $\mathbb X$
and a set of reference points ${\mathscr X} =\{ x(X) \}_{X \in \mathbb{X}}$ associated with 
$\mathbb X$, 
we define two kinds of {\it{discretization maps}}
${\mathcal P}_{\mathbb X}: L^p({\mathfrak m}) 
\rightarrow L^p({\mathbb G}_{\partial} (\mathbb{X}, \rho))$ 
and 
$[\cdot ]_{\mathscr X} : C_{0}(M) \rightarrow C_{0}({\mathbb G}_{\partial} (\mathbb{X}, \rho))$
by
\begin{equation*}
{\mathcal P}_{\mathbb X}f(X)=\frac{1}{{\mathfrak m}(X)}\int_{X} f 
\, {\dd}{\mathfrak m}
\quad \mbox{and}
\quad
[f]_{{\mathscr X}}
(X)=f(x(X)),\quad 
\end{equation*}
respectively.
\subsection{Random walk in a flow with killing}
Let us recall that $b$ is a smooth vector field on the 
complete Riemannian manifold $M$ and $V$ is a non-negative smooth
potential function on $M$. 

We first introduce the notion of the flow generated by $b$.
For each $x\in M$, there exist 
an open interval $I=I_{x}=(-t(x), t(x))$
around $0\in \mathbb R$ and a 
smooth curve $(\varphi_{t}(x))_{t\in I}$ on $M$ satisfying
\begin{equation*}
\varphi_{0}(x)=x, \quad
\frac{d}{dt} \varphi_{t}(x)=b(\varphi_{t}(x)), \quad t\in I.
\end{equation*}
Since the solution to the above differential equation depends smoothly on the initial point $x\in M$,
it induces a local flow $\varphi=(\varphi_{t}(x)): \{(t,x);~t\in I_{x}, x\in M \}\to M$ generated by the vector
field $b$. Note that $t(x)$
is continuous with respect to $x$.
We set $$s(x):=\inf \{ t>0; \varphi_{t}(x)\notin M\}, \quad x\in M.$$
Obviously, 
$s(x)\geq t(x)$
holds for all $x\in M$. 
Moreover $\varphi_{s}(x)=\partial$ provided 
$s\geq s(x)$.

For $\vert {\mathbb X} \vert <\rho$, $X \in \mathbb{X}$ and  $s\geq 0$, 
we set
\begin{equation*}
N_{\rho, {\mathscr X}}(s; X):=\{ Y \in  \mathbb{X} ; Y \sim_\rho X(\varphi_s(x(X))) \}
\end{equation*}
and 
\begin{equation*}
\mathcal{N}_{\rho, {\mathscr X}}(s; X):=\bigcup_{Y \in N_{\rho, {\mathscr X}}(s; X)} Y.
\end{equation*}
In the case $b=0$, $N_{\rho, {\mathscr X}}(s; X)=N_{\rho}(X)$. If $s \geq s(x(X))$, 
 $N_{\rho, {\mathscr X}}(s; X)={\bm{\partial}}$.

Inspired by an idea of \cite{Mad89}, 
we introduce a random walk in the flow $\varphi$ with killing on 
the proximity graph
$({\mathbb G}_{\partial}({\mathbb X}, \rho), {\mathscr X})$.
Let $\alpha, s\geq 0$. We
define the transition probability $p_{\alpha, s}=p_{\alpha, s, \mathscr X}$
on $({\mathbb G}_{\partial}({\mathbb X}, \rho), {\mathscr X})$ as follows:
For each $X \in \mathbb{V}$, $Y \in \mathbb{V}_{ \partial }$ and
$0\leq s<  s(x(X))$,
\begin{equation*}
p_{\alpha, s}(X,Y):=\left\{ 
\begin{array}{ll}
{\displaystyle{
\min \{\alpha V(x(X)), 1\}
}} & \mbox{if } Y= {\bm{\partial}}, 
\\
{\displaystyle{
\max \big \{
1-\alpha V(x(X)), 0 \big \}
\frac{\mathfrak m(Y)}{\mathfrak m (\mathcal{N}_{\rho, {\mathscr X}}(s; X))}
}}
 &  \mbox{if }Y \in N_{\rho, {\mathscr X}}(s; X), \\
0 & \mbox{otherwise}
\end{array}\right.
\end{equation*}
and for $s \geq s(x(X))$
\begin{equation*}
p_{\alpha, s}(X, Y):=\left\{ 
\begin{array}{ll}
1 &  \mbox{if }Y={\bm{\partial}}, \\
0 & \mbox{otherwise}
\end{array}\right. .
\end{equation*}
Besides, we set
$$ p_{\alpha, s}(
{\bm{\partial}},
{\bm{\partial}}):=1, \quad p_{\alpha, s}({\bm{\partial}}, X):=0, \quad X 
\in {\mathbb X} ~~~\mbox{for all }s\geq 0.$$
In the usual manner, the transition probability $p_{\alpha, s}$ induces a 
time homogeneous Markov chain on ${\mathbb G}_{\partial}({\mathbb X}, \rho)$. We call it the {\it{random walk in the flow}} 
$\varphi_{s}$ with a {\it{killing rate}} $\alpha V$.

The corresponding transition operator $L_{\alpha, s}
=L_{\alpha, s, \mathscr X}: C_{0}(
{\mathbb G}_{\partial}({\mathbb X}, \rho)) \to 
C_{0}({\mathbb G}_{\partial} ({\mathbb X}, \rho))$ 
is given by 
$L_{\alpha, s}F({\bm{\partial}}):=F({\bm{\partial}})=0$ and
\begin{align}
L_{\alpha, s} F(X)&:=
\sum_{Y\in {\mathbb V}_{\partial}} 
p_{\alpha, s}(X, Y)F(Y), \quad X\in {\mathbb X}.
\label{transition}
\end{align}
This means 
\begin{equation*}
L_{\alpha, s} F(X)
=\left\{ 
\begin{array}{ll}
{\displaystyle{
\max \big \{
1-\alpha V(x(X)), 0 \big \}
\sum_{Y \in N_{\rho, {\mathscr X}}(s; X)}
\frac{{\mathfrak m}(Y)}{{\mathfrak m}(\mathcal{N}_{\rho, \mathscr{X}}(s; X ))} F(Y)
}}
&  \mbox{if }s<s(x(X)), \\
0 & \mbox{if }s\geq s(x(X)).
\end{array}\right. 
\end{equation*}
We abbreviate $L_{\alpha, \alpha}$ to $L_{\alpha}$ for brevity.

Here we give a remark. Let $b=0$ and $V=0$, and
put
$m(X):={\mathfrak m}(X)
{\mathfrak m}(\mathcal{N}_{\rho, \mathscr{X}}(s; X))$, $X \in \mathbb{X}$,
and ${m}({\bm{\partial}})={\mathfrak m}({\bm{\partial}}):=0$.
Then we easily see
$$ p_{\alpha, s}(X,Y)m(X)={\mathfrak m}(X){\mathfrak m}(Y)=p_{\alpha, s}(Y, X)m(Y), \quad X, Y\in {\mathbb V}_{\partial},$$
which means that the corresponding random walk is {\it{$m$-symmetric}}.
On the other hand, 
the random walk in the flow $\varphi$ is not
necessarily symmetric in general.
Indeed, by taking $s>0$ large enough and $\rho>0$ small enough, the distances
among $x \in X$, $\varphi_{s}(x)$ and $\varphi_{2s}(x)$ 
can be very large. In this situation, if $Y$ is close to $\varphi_{s}(x)$, we observe 
$p_{\alpha, s}(X, Y)>0$ and $p_{\alpha, s}(Y, X)=0$. This
means that the random walk cannot be symmetric.
\subsection{Main results}
Now we are in a position to state the main 
results 
in the present paper.
We start with presenting the following theorem.

\begin{tm}
\label{Main-1}
Let $M$ be an $n$-dimensional complete manifold, 
$b$ be a smooth vector field on $M$ and $V$ be a 
non-negative smooth potential function on $M$.
Let $\mathcal{A}=-\Delta-b+V$ be a drifted Schr\"odinger operator on $M$ 
satisfying condition {\bf{(A)}}.
Let $\{\mathbb{X}_{k} \}_{k\in {\mathbb N}}$ 
be a sequence of partitions of $M$ satisfying 
conditions {\bf{(B)}}, {\bf{(C)}} and 
$ |\mathbb{X}_{k}|  \searrow 0$ as $k\to \infty$.
Let $\{ k(\rho) \}_{\rho>0}$
be a subsequence of $\mathbb N$
satisfying
$k(\rho)\nearrow \infty$ 
and $|\mathbb{X}_{k(\rho)}|=o(\rho^2)$ as 
$\rho \searrow 0$. 
Then 
for any 
$f\in C_{0}(M)$ and $t>0$,
\begin{equation}
\lim_{\rho \searrow 0}
\Big \Vert
L_{\frac{\rho^2}{2(n+2)}}
^{\lfloor \frac{2(n+2)}{\rho^2}t \rfloor} 
[f]_{{\mathscr X}_{k(\rho)}}
-\big[ {\ee}^{-t\mathcal A }  f \big]_{{\mathscr X}_{k(\rho)}}
\Big \Vert_{C_0({\mathbb G}_{\partial} (\mathbb{X}_{k(\rho)}, \rho))}
=0.
\label{Main conv1}
\end{equation}
Moreover,
this convergence also holds in $L^{p}$-setting for all $1<p<\infty$.
Namely, under conditions {\bf{(A1)$_{\bm{p}}$}}, {\bf{(A2)$_{\bm{p}}$}}, {\bf{(B)}} and  {\bf{(C)}}, for any 
$f \in L^{p}(\mathfrak m)$ and $t>0$,
\begin{equation}
\lim_{\rho \searrow 0}
\Big \Vert
L_{\frac{\rho^2}{2(n+2)}
}^{\lfloor \frac{2(n+2)}{\rho^2}t \rfloor} {\mathcal P}_{{\mathbb X}_{k(\rho)}} f
-
{\mathcal P}_{{\mathbb X}_{k(\rho)}} ({\ee}^{-t\mathcal A}  f) 
\Big \Vert_{L^p({\mathbb G}_{\partial} (\mathbb{X}_{k(\rho)}, \rho))} 
=0.
\label{Main conv2}
\end{equation}
\end{tm}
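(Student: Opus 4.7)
My plan is to reduce \eqref{Main conv1} to a pointwise generator convergence statement and then invoke a discrete version of the Trotter-Kato approximation theorem for $C_0$-semigroups between the varying Banach spaces $C_0(\mathbb{G}_\partial(\mathbb{X}_{k(\rho)},\rho))$ and $C_0(M)$ (cf.\ \cite{Tro58,Kur69}). Setting $\alpha_\rho:=\rho^2/(2(n+2))$, the operator $L_{\alpha_\rho}$ is a sub-Markovian contraction on $C_0(\mathbb{G}_\partial(\mathbb{X}_{k(\rho)},\rho))$, so the stability hypothesis of the approximation theorem is automatic, and condition \textbf{(A)} guarantees that $C_c^\infty(M)$ is a core for the closure of $(-\mathcal{A}, C_c^\infty(M))$. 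What remains is to prove, for every $f\in C_c^\infty(M)$,
\begin{equation*}
\lim_{\rho \searrow 0}\Big\|\alpha_\rho^{-1}\bigl(L_{\alpha_\rho}-I\bigr)[f]_{\mathscr{X}_{k(\rho)}} + [\mathcal{A}f]_{\mathscr{X}_{k(\rho)}}\Big\|_{C_0(\mathbb{G}_\partial(\mathbb{X}_{k(\rho)},\rho))}=0,
\end{equation*}
which I would state as a separate pointwise generator convergence theorem.

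To prove this, fix $f\in C_c^\infty(M)$ and write $x=x(X)$ for $X\in\mathbb{X}_{k(\rho)}$. For $\alpha_\rho<s(x)$,
\begin{equation*}
L_{\alpha_\rho}[f]_{\mathscr{X}}(X) = \max\{1-\alpha_\rho V(x),0\}\cdot\frac{1}{{\mathfrak m}(\mathcal{N}_{\rho,\mathscr{X}}(\alpha_\rho;X))}\sum_{Y \in N_{\rho,\mathscr{X}}(\alpha_\rho;X)}{\mathfrak m}(Y)f(x(Y)),
\end{equation*}
and the estimate proceeds via three Taylor expansions: (i) $f(\varphi_{\alpha_\rho}(x))=f(x)+\alpha_\rho(bf)(x)+O(\alpha_\rho^2)$ from the flow; (ii) the Riemannian mean-value expansion $\frac{1}{{\mathfrak m}(B_\rho(y))}\int_{B_\rho(y)}f\,{\dd}{\mathfrak m}=f(y)+\alpha_\rho\Delta f(y)+O(\rho^3)$ at $y=\varphi_{\alpha_\rho}(x)$, which matches the normalization $\alpha_\rho=\rho^2/(2(n+2))$; and (iii) the comparison between the discrete average over $\mathcal{N}_{\rho,\mathscr{X}}(\alpha_\rho;X)$ and the continuous average over $B_\rho(y)$. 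The hypothesis $|\mathbb{X}_{k(\rho)}|=o(\rho^2)$ enters in (iii): the symmetric difference between $\mathcal{N}_{\rho,\mathscr{X}}(\alpha_\rho;X)$ and $B_\rho(y)$ is a shell of thickness $O(|\mathbb{X}_{k(\rho)}|)$, and after subtracting the constant $f(y)$ (whose contributions cancel identically between the two normalized averages) only $f-f(y)=O(\rho)$ enters the shell integral, producing an error $O(\rho\cdot|\mathbb{X}_{k(\rho)}|/\rho)=O(|\mathbb{X}_{k(\rho)}|)=o(\rho^2)=o(\alpha_\rho)$; replacing each $f(x(Y))$ by the cell average of $f$ over $Y$ costs another $O(|\mathbb{X}_{k(\rho)}|)=o(\alpha_\rho)$. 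Combining these with $\max\{1-\alpha_\rho V(x),0\}=1-\alpha_\rho V(x)+O(\alpha_\rho^2)$ (valid for small $\rho$ on $\mathrm{supp}(f)$) gives
\begin{equation*}
L_{\alpha_\rho}[f]_{\mathscr{X}}(X)-f(x)=\alpha_\rho\bigl[(\Delta f)(x)+(bf)(x)-V(x)f(x)\bigr]+o(\alpha_\rho)=-\alpha_\rho[\mathcal{A}f]_{\mathscr{X}}(X)+o(\alpha_\rho),
\end{equation*}
uniformly over $X$. Cells with $s(x(X))\leq\alpha_\rho$ force $L_{\alpha_\rho}[f]_{\mathscr{X}}(X)=0$; by continuity of the flow such cells lie outside any preassigned neighborhood of $\mathrm{supp}(f)$ once $\rho$ is small, so both sides are zero there and the uniform estimate is undisturbed.

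The $L^p$-statement \eqref{Main conv2} follows the same scheme with $[\cdot]_{\mathscr{X}}$ replaced by $\mathcal{P}_{\mathbb{X}}$: conditions \textbf{(A1)$_{\bm{p}}$} and \textbf{(A2)$_{\bm{p}}$} supply the core and range density required for Trotter-Kato in $L^p({\mathfrak m})$, transferring between reference-point values and cell averages costs only an additional $O(|\mathbb{X}_{k(\rho)}|)$ error in $L^p$-norm, and one extends from $C_c^\infty(M)$ to $L^p({\mathfrak m})$ by density. The main obstacle I foresee is the uniform control in step (iii): because $\mathcal{N}_{\rho,\mathscr{X}}(\alpha_\rho;X)$ is defined via a Hausdorff-distance criterion on cells rather than as a direct metric ball, matching it against $B_\rho(\varphi_{\alpha_\rho}(x(X)))$ with precision $o(\alpha_\rho)$ relies decisively on the quantitative relation $|\mathbb{X}_{k(\rho)}|=o(\rho^2)$ together with the cancellation of the constant term in the mean-value formula described above. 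This is the delicate technical heart of the proof.
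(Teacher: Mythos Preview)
Your proposal is correct and follows essentially the same approach as the paper: the paper proves a separate pointwise generator convergence theorem (with the explicit rate $K_1|\mathbb{X}|/\rho^2 + K_2\rho$) via exactly your three-step decomposition---flow Taylor expansion, spherical mean approximation of $\Delta$, and discrete-versus-ball comparison controlled by the shell of width $|\mathbb{X}|$---establishes an easy lemma showing that the discretization maps $[\cdot]_{\mathscr{X}_k}$ and $\mathcal{P}_{\mathbb{X}_k}$ approximate $C_0(M)$ and $L^p(\mathfrak m)$ in Trotter's sense, and then invokes the Trotter--Kurtz theorem. The only ingredient you leave implicit is that approximation lemma (that $\|[f]_{\mathscr{X}_k}\|\to\|f\|_\infty$ and $\|\mathcal{P}_{\mathbb{X}_k}f\|_{L^p}\to\|f\|_{L^p}$), but it is routine.
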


This theorem implies the following corollary immediately.
\begin{co} \label{Main-2}
Under the setting of Theorem {\rm{\ref{Main-1}}}, 
we have the following:
\\
{\rm{(1)}}~For any $x\in M$, $f\in C_{0}(M)$ and $t>0$,
\begin{equation}
{\ee}^{-t{\mathcal A}}f(x)=\lim_{\rho \searrow 0}
L_{\frac{\rho^2}{2(n+2)}}^{\lfloor \frac{2(n+2)}{\rho^2}t \rfloor}
[f]_{{\mathscr X}_{k(\rho)}}(X_{k(\rho)}(x)),
\label{pointwise conv}
\end{equation}
where $X_{k(\rho)}(x)$ is the unique element of $\mathbb{X}_{k(\rho)}$ containing $x$. 
\\
{\rm{(2)}}~For any bounded open set $U$ in $M$, $f\in L^{p}({\mathfrak m})$ and $t>0$,
\begin{equation}
\int_{U} 
{\ee}^{-t{\mathcal A}}f(x) {\dd} {\mathfrak m}(x)=
\lim_{\rho \searrow 0}
\sum_{X\in {\mathbb X}_{k(\rho)},  X\subset U}
L_{\frac{\rho^2}{2(n+2)}}^{\lfloor \frac{2(n+2)}{\rho^2}t \rfloor}
{\mathcal P}_{{\mathbb X}_{k(\rho)}} f(X) 
{\mathfrak m}(X).
\label{mean conv}
\end{equation}
\end{co}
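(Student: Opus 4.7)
The plan is to derive both assertions of Corollary \ref{Main-2} directly from the uniform- and $L^{p}$-norm convergences supplied by Theorem \ref{Main-1}; the arguments amount to specializing the norms to a single vertex (for part (1)) and summing against the volume measure over a bounded region (for part (2)).

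For part (1), I would evaluate the convergence (\ref{Main conv1}) at the vertex $X_{k(\rho)}(x) \in {\mathbb X}_{k(\rho)}$. Since $[{\ee}^{-t{\mathcal A}} f]_{{\mathscr X}_{k(\rho)}}(X_{k(\rho)}(x)) = {\ee}^{-t{\mathcal A}} f(x(X_{k(\rho)}(x)))$, the triangle inequality gives
$$
\bigl| L_{\rho^2/(2(n+2))}^{\lfloor 2(n+2)t/\rho^2 \rfloor}[f]_{{\mathscr X}_{k(\rho)}}(X_{k(\rho)}(x)) - {\ee}^{-t{\mathcal A}} f(x) \bigr| \leq A_{\rho} + B_{\rho},
$$
where $A_{\rho}$ is bounded by the left-hand side of (\ref{Main conv1}) (which vanishes by Theorem \ref{Main-1}) and $B_{\rho} := |{\ee}^{-t{\mathcal A}} f(x(X_{k(\rho)}(x))) - {\ee}^{-t{\mathcal A}} f(x)|$. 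Since ${\ee}^{-t{\mathcal A}} f \in C_{0}(M)$ is continuous at $x$ and $d(x, x(X_{k(\rho)}(x))) \leq |{\mathbb X}_{k(\rho)}| \to 0$, we have $B_{\rho} \to 0$.

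For part (2), set $U_{\rho} := \bigcup \{X \in {\mathbb X}_{k(\rho)} : X \subset U\} \subset U$. The definition of ${\mathcal P}_{{\mathbb X}_{k(\rho)}}$ yields
$$
\sum_{X \subset U} {\mathcal P}_{{\mathbb X}_{k(\rho)}} g(X)\, {\mathfrak m}(X) = \int_{U_{\rho}} g \,{\dd}{\mathfrak m}
$$
for any integrable $g$. I would then decompose the target quantity as $I_{\rho} + J_{\rho}$, where
$$
I_{\rho} := \sum_{X \subset U} \Bigl( L_{\rho^2/(2(n+2))}^{\lfloor 2(n+2)t/\rho^2 \rfloor}{\mathcal P}_{{\mathbb X}_{k(\rho)}} f - {\mathcal P}_{{\mathbb X}_{k(\rho)}}({\ee}^{-t{\mathcal A}} f) \Bigr)(X)\, {\mathfrak m}(X),\quad J_{\rho} := -\int_{U \setminus U_{\rho}} {\ee}^{-t{\mathcal A}} f \,{\dd}{\mathfrak m}.
$$
Hölder's inequality controls $|I_{\rho}|$ by ${\mathfrak m}(U)^{1/q}$ (where $1/p+1/q=1$, and $U$ is bounded) times the left-hand side of (\ref{Main conv2}), so $I_{\rho} \to 0$.

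The only delicate step, and therefore the main obstacle, is showing $J_{\rho} \to 0$ without any boundary regularity on $U$. If $y \in U \setminus U_{\rho}$, then the partition cell $X(y)$ meets $M \setminus U$, so $d(y, M \setminus U) \leq |{\mathbb X}_{k(\rho)}|$; hence $U \setminus U_{\rho} \subset \{y \in U : d(y, M \setminus U) \leq |{\mathbb X}_{k(\rho)}|\}$. Since $U$ is open, the family on the right decreases to the empty set as $\rho \searrow 0$, and boundedness of $U$ gives ${\mathfrak m}(U) < \infty$, so monotone convergence yields ${\mathfrak m}(U \setminus U_{\rho}) \to 0$. Then dominated convergence (or a further Hölder estimate against ${\ee}^{-t{\mathcal A}} f \in L^{p}({\mathfrak m})$) finishes $J_{\rho} \to 0$, completing the proof.
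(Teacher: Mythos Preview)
Your proof is correct and follows essentially the same approach as the paper's: for part (1) you split via the triangle inequality into a uniform-norm term controlled by (\ref{Main conv1}) and a continuity term for ${\ee}^{-t{\mathcal A}}f$, exactly as in the paper; for part (2) your decomposition $I_{\rho}+J_{\rho}$ coincides with the two terms in the paper's display (\ref{mean est1}), and your H\"older bound on $I_{\rho}$ matches the paper's. Your justification that $J_{\rho}\to 0$ (via the inclusion $U\setminus U_{\rho}\subset\{y\in U: d(y,M\setminus U)\leq |{\mathbb X}_{k(\rho)}|\}$ and openness of $U$) is in fact more explicit than the paper's, which simply asserts this convergence with the phrase ``Because $U$ is a bounded open set and $|\mathbb X_{k(\rho)}|\to 0$''.
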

We also obtain 
the following corollary by repeating arguments in the proof of Theorem \ref{Main-1} with a slight modification.
\begin{co}
\label{co-weighted}
{\rm{(1)}}~
Suppose that the potential function $V$ is bounded from below by $-v_0$ for some constant $v_0 \geq 0$.
Then
under the same setting as Theorem {\rm{\ref{Main-1}}}, 
\begin{equation*}
\lim_{\rho \searrow 0}
\Big \Vert
e^{v_0 t}
\widetilde{L}_{\frac{\rho^2}{2(n+2)}}
^{\lfloor \frac{2(n+2)}{\rho^2}t \rfloor} 
[f]_{{\mathscr X}_{k(\rho)}}
-\big[ {\ee}^{-t\mathcal{A} }  f \big]_{{\mathscr X}_{k(\rho)}}
\Big \Vert_{C_0({\mathbb G}_{\partial} (\mathbb{X}_{k(\rho)}, \rho))}
=0,
\end{equation*}
where $\widetilde{L}_\alpha=\widetilde{L}_{\alpha, \alpha}$ is the transition operator in the flow $\varphi_{\alpha}$
with the killing rate  $\alpha(V+v_0)$
defined in {\rm{(\ref{transition})}}. 
\vspace{2mm} \\
{\rm{(2)}}~
Let $(M, g, \mu)$ be a weighted manifold with ${\dd}\mu (x)=e^{-U(x)} {\dd} \mathfrak{m}(x)$ for a smooth function $U$ on $M$.
Define the weighted Laplacian $\Delta_\mu$ by
\begin{equation*}
\Delta_{\mu}f= \mathrm{div}_\mu( \nabla f)=\Delta f - \langle \nabla U, \nabla f \rangle,
\end{equation*}
where $\mathrm{div}_\mu  b= e^{U} \mathrm{div}\left( e^{-U} b \right)$ (see {\rm{\cite[Section 3.6]{Gri09}}} and 
{\rm{\cite{Shi12}}}).
Then Theorem {\rm{\ref{Main-1}}} with respect to the weighted measure $\mu$ instead of $\mathfrak{m}$
(used in the definition of $L_{\alpha, s}$) and $\mathcal{A}_\mu =-\Delta_\mu -b + V$ holds.
In this situation, positive constants $K_1$ and $K_2$ appearing in the error estimate 
{\rm{(\ref{pointwise generator estimate})}} in Theorem {\rm{\ref{pointwise generator convergence}}}
depend also on $U$.
\end{co}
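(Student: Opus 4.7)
For part (1), the plan is to reduce to Theorem \ref{Main-1} by a constant shift of the potential. Set $\widetilde V:=V+v_0$, which is non-negative and smooth on $M$, and let $\widetilde{\mathcal A}:=-\Delta-b+\widetilde V=\mathcal A+v_0 I$. Since condition \textbf{(A)} is stable under adding a real scalar (if $(\lambda+\mathcal A)(C_c^\infty(M))$ is dense in $C_0(M)$, so is $((\lambda-v_0)+\widetilde{\mathcal A})(C_c^\infty(M))$ for the shifted $\lambda$), $\widetilde{\mathcal A}$ fits the framework of Theorem \ref{Main-1}. Applying that theorem to $\widetilde{\mathcal A}$ with the associated transition operator $\widetilde{L}_\alpha$ (whose killing rate is $\alpha\widetilde V=\alpha(V+v_0)$) yields
\begin{equation*}
\Bigl\Vert \widetilde{L}_{\frac{\rho^2}{2(n+2)}}^{\lfloor 2(n+2)t/\rho^2\rfloor}[f]_{\mathscr X_{k(\rho)}}-[{\ee}^{-t\widetilde{\mathcal A}}f]_{\mathscr X_{k(\rho)}}\Bigr\Vert_{C_0(\mathbb{G}_\partial(\mathbb{X}_{k(\rho)},\rho))}\longrightarrow 0.
\end{equation*}
The identity $\ee^{-t\mathcal A}=\ee^{v_0 t}\ee^{-t\widetilde{\mathcal A}}$, together with the fact that $[\,\cdot\,]_{\mathscr X}$ and the $C_0$-norm are linear, gives part (1) after multiplying through by $\ee^{v_0 t}$ (the extra $\ee^{v_0 t}$ factor is harmless because it is a bounded constant for each fixed $t$).

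For part (2), the plan is to rerun the proof of Theorem \ref{Main-1} verbatim, replacing $\mathfrak m$ by $\mu$ throughout: the partitions, proximity graphs, and the flow $\varphi$ are unaffected, but the discretization map $\mathcal P_{\mathbb X}$, the transition probability $p_{\alpha,s}$, and both discrete function spaces $C_0(\mathbb G_\partial(\mathbb X,\rho))$, $L^p(\mathbb G_\partial(\mathbb X,\rho))$ are defined using $\mu(X)$ and $\mu(\mathcal N_{\rho,\mathscr X}(s;X))$ in place of the corresponding $\mathfrak m$-quantities. The Lumer--Phillips argument that produced $\{\ee^{-t\mathcal A}\}_{t\geq 0}$ applies to $\mathcal A_\mu=-\Delta_\mu-b+V$ in $C_0(M)$ (or $L^p(\mu)$) under the analogue of \textbf{(A)}, because $\Delta_\mu=\Delta-\langle\nabla U,\nabla\cdot\rangle$ differs from $\Delta$ by a smooth first-order term, which can be absorbed into the drift. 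Once the pointwise generator convergence of Theorem \ref{pointwise generator convergence} is established in this weighted setting, Trotter's approximation theorem delivers the conclusion just as in Theorem \ref{Main-1}.

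The substantive step, and the main obstacle, is to redo the local expansion underlying Theorem \ref{pointwise generator convergence} with the weight $\ee^{-U}$. For $f\in C_c^\infty(M)$ and small $\rho$, one expands $f(y)=f(x)+\langle\nabla f(x),\exp_x^{-1}y\rangle+\tfrac12\nabla^2 f(x)(\exp_x^{-1}y,\exp_x^{-1}y)+\cdots$ and, crucially, also $\ee^{-U(y)}=\ee^{-U(x)}\bigl(1-\langle\nabla U(x),\exp_x^{-1}y\rangle+O(|\exp_x^{-1}y|^2)\bigr)$ inside the $\mu$-integrals appearing in $L_{\alpha,s}[f]_{\mathscr X}(X)$. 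The odd-order cross term $\langle\nabla f,\exp_x^{-1}y\rangle\cdot\langle\nabla U,\exp_x^{-1}y\rangle$ integrated against the (approximately radially symmetric) $\mu$-measure on $\mathcal N_{\rho,\mathscr X}(s;X)$ produces precisely the $-\langle\nabla U,\nabla f\rangle/(2(n+2))$ contribution needed to promote $\Delta f$ to $\Delta_\mu f=\Delta f-\langle\nabla U,\nabla f\rangle$, while the drift and killing terms are unchanged. The error constants in \eqref{pointwise generator estimate} now inherit a dependence on $\|U\|_{C^k}$ on compact sets (through bounds on $\nabla U$ and $\nabla^2 U$ entering the remainder), which is the source of the extra dependence of $K_1,K_2$ on $U$ claimed in the statement. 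Once this weighted generator expansion is carried out uniformly on compacta, the remaining Trotter-type synthesis proceeds without further change.
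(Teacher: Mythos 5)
Your part (1) is correct and is exactly the reduction the authors intend: pass to $\widetilde{\mathcal A}=\mathcal A+v_0$ with non-negative potential $\widetilde V=V+v_0$, apply Theorem \ref{Main-1} to $\widetilde{\mathcal A}$ and the walk $\widetilde L_\alpha$, and transport the conclusion back through ${\ee}^{-t\mathcal A}={\ee}^{v_0t}{\ee}^{-t\widetilde{\mathcal A}}$. (Two small points to make explicit: since $-\mathcal A$ itself need not be dissipative in $C_0(M)$ when $V$ takes negative values, ${\ee}^{-t\mathcal A}$ should be \emph{defined} by ${\ee}^{v_0t}{\ee}^{-t\widetilde{\mathcal A}}$, and the shifted constant $\lambda-v_0$ in your range argument may fail to be positive; this is repaired by the standard fact that once the closure of a dissipative operator generates a contraction semigroup, the range condition holds for every $\lambda>0$.)

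Part (2) has a genuine gap at exactly the step you call "the substantive step." You assert, without computing it, that the cross term between $\langle\nabla f,\exp_x^{-1}y\rangle$ and $\langle\nabla U,\exp_x^{-1}y\rangle$ produces \emph{precisely} the $-\langle\nabla U,\nabla f\rangle$ correction turning $\Delta f$ into $\Delta_\mu f$. Carrying out the expansion in normal coordinates at $y=\varphi_\delta(x)$ with $e^{-U(u)}=e^{-U(0)}\big(1-\partial_jU(0)u^{(j)}+O(|u|^2)\big)$, the first-order term of $f$ no longer integrates to zero against the linear part of the weight, and one finds
\begin{equation*}
\frac{1}{\mu(B_\rho(y))}\int_{B_\rho(y)}f\,{\dd}\mu-f(y)
=\frac{\rho^2}{2(n+2)}\Big(\Delta f(y)-2\langle\nabla U,\nabla f\rangle(y)\Big)+O(\rho^3),
\end{equation*}
with a factor $2$ in front of the drift correction. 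Already on $M=\mathbb R$ with $f(u)=u$, $U(u)=au$, the weighted average over $[-\rho,\rho]$ equals $\tfrac1a-\rho\coth(a\rho)=-a\rho^2/3+O(\rho^4)=\tfrac{\rho^2}{6}(f''-2U'f')$, whereas $\tfrac{\rho^2}{6}\Delta_\mu f=-a\rho^2/6$. Hence the walk built with the measure $\mu$ has scaled generator converging to $-\Delta+2\langle\nabla U,\nabla\cdot\rangle-b+V=-\Delta_\nu-b+V$ with ${\dd}\nu=e^{-2U}{\dd}\mathfrak m$, not to $\mathcal A_\mu=-\Delta_\mu-b+V$; this is the familiar factor-of-two phenomenon for density-weighted graph Laplacians. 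To land on $\Delta_\mu$ you must either define $L_{\alpha,s}$ with the measure $e^{-U/2}{\dd}\mathfrak m$ or compensate by replacing the drift $b$ by $b+\nabla U$ in the flow, and your write-up must say which (as it stands, the computation also flags that the statement of Corollary \ref{co-weighted}(2) needs the same adjustment). The remainder of your outline for part (2) --- replacing $\mathfrak m$ by $\mu$ in the partitions, discretization maps and discrete function spaces, treating $-\langle\nabla U,\nabla\cdot\rangle$ as an extra smooth drift for the Lumer--Phillips step, tracking the dependence of $K_1,K_2$ on derivatives of $U$, and then invoking Trotter --- is the "slight modification" the paper has in mind and is fine once the generator expansion is fixed.
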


Now we further impose that the manifold $M$ is compact. In this case, we have the following
quantitative error estimate.
\begin{tm}
\label{Main-3}
Let $M$ be an $n$-dimensional closed Riemannian manifold.
Under the setting of Theorem {\rm{\ref{Main-1}}}, we take a subsequence $\{ k(\rho) \}$ of $\mathbb{N}$ satisfying 
$k(\rho) \nearrow \infty$ and 
$| \mathbb{X}_{k(\rho)} |=O(\rho^{2+\alpha})$ as $\rho \searrow 0$, where $\alpha>0$. Then 
for any $f\in C^{\infty}(M)$, $t\geq 0$ and 
$0<\rho<1$ satisfying condition
{\rm{(\ref{condition of V})}} in Theorem {\rm{\ref{pointwise generator convergence}}} below, 
the following error estimate holds:
\begin{equation}
\Big \Vert
L_{\frac{\rho^2}{2(n+2)}}
^{\lfloor \frac{2(n+2)}{\rho^2}t \rfloor} 
[f]_{{\mathscr X}_{k(\rho)}}
-\big[ {\ee}^{-t\mathcal A }  f \big]_{{\mathscr X}_{k(\rho)}}
\Big \Vert_{C_0({\mathbb G}_{\partial} (\mathbb{X}_{k(\rho)}, \rho))}
\leq C \rho^{\alpha \wedge 1}, 
\label{Main conv3}
\end{equation}
where $C=C(t, n, f, 
\Vert b \Vert_{\infty}, \Vert \nabla_{b}b \Vert_{\infty}, \Vert V \Vert_{\infty})
$ is a positive constant.
\end{tm}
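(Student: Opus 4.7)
The strategy is the quantitative analog of the proof of Theorem~\ref{Main-1}: combine the explicit generator error bound of Theorem~\ref{pointwise generator convergence} with a quantitative Trotter approximation theorem, namely the recent result of \cite{Nam22}. Set $h := \rho^2/(2(n+2))$ and $N := \lfloor t/h \rfloor$, so that $h^{-1}(L_h - I)$ plays the role of the generator of the discrete semigroup $\{L_h^k\}_{k\geq 0}$ associated with the $k(\rho)$-th partition. The argument has essentially two inputs: a local (one-step) estimate, coming from Theorem~\ref{pointwise generator convergence}, and a global-in-time bootstrap, supplied by the quantitative Trotter theorem.

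For the local step, I would apply Theorem~\ref{pointwise generator convergence} to a smooth $\phi \in C^\infty(M)$, yielding an estimate of the schematic form
\begin{equation*}
\Bigl\| \tfrac{1}{h}(L_h - I)[\phi]_{\mathscr{X}_{k(\rho)}} + [\mathcal{A}\phi]_{\mathscr{X}_{k(\rho)}} \Bigr\|_{C_0(\mathbb{G}_\partial(\mathbb{X}_{k(\rho)}, \rho))} \leq K_1 \rho + K_2 \frac{|\mathbb{X}_{k(\rho)}|}{\rho^2},
\end{equation*}
valid under the smallness assumption on $\rho V$ referred to in (\ref{condition of V}), with constants depending on $\|\phi\|_{C^3}$, $\|b\|_\infty$, $\|\nabla_b b\|_\infty$ and $\|V\|_\infty$. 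The hypothesis $|\mathbb{X}_{k(\rho)}| = O(\rho^{2+\alpha})$ then forces the right-hand side to be $O(\rho^{\alpha \wedge 1})$. I would feed this pointwise generator rate into the quantitative Trotter theorem of \cite{Nam22}, applied along the trajectory $\phi_s := \ee^{-s\mathcal{A}} f$, $s\in[0,t]$, and use the contractivity $\|L_h\|_{C_0 \to C_0} \leq 1$, to obtain
\begin{equation*}
\bigl\| L_h^N [f]_{\mathscr{X}_{k(\rho)}} - [\ee^{-Nh\mathcal{A}} f]_{\mathscr{X}_{k(\rho)}} \bigr\|_{C_0} \leq C(t, f) \rho^{\alpha \wedge 1}.
\end{equation*}
The leftover discrepancy $\|\ee^{-Nh\mathcal{A}} f - \ee^{-t\mathcal{A}} f\|_\infty \leq (t - Nh)\|\mathcal{A} \ee^{-t\mathcal{A}} f\|_\infty = O(h) = O(\rho^2)$ is absorbed into $O(\rho^{\alpha \wedge 1})$, producing (\ref{Main conv3}).

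The main obstacle is that the quantitative Trotter theorem requires uniform-in-$s$ control of the generator error along the trajectory $\{\phi_s\}_{s \in [0, t]}$, which in turn demands uniform bounds on $\|\phi_s\|_{C^3(M)}$. On the closed manifold $M$, parabolic regularity for the smooth operator $\mathcal{A} = -\Delta - b + V$ with smooth coefficients guarantees $\phi_s \in C^\infty(M)$ with $\|\phi_s\|_{C^k(M)}$ bounded uniformly in $s \in [0, t]$ by a constant depending on $\|f\|_{C^k(M)}$, $t$, and the $C^k$-norms of $b$ and $V$. The bookkeeping to propagate this uniform regularity through the telescoping argument of \cite{Nam22} and combine it with the constants from Theorem~\ref{pointwise generator convergence} so as to reproduce precisely the advertised dependence $C = C(t, n, f, \|b\|_\infty, \|\nabla_b b\|_\infty, \|V\|_\infty)$ is somewhat tedious but routine; crucially, compactness of $M$ is what lets one uniformize the regularity bounds, which is why the quantitative estimate is only stated in the closed case.
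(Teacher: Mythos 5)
Your proposal is correct and follows essentially the same route as the paper: the authors likewise combine the generator error estimate of Theorem \ref{pointwise generator convergence} with Namba's quantitative Trotter theorem \cite[Theorem 1]{Nam22} applied along the trajectory $s\mapsto {\ee}^{-s\mathcal{A}}f$, using the stability ${\ee}^{-t\mathcal{A}}(C^{\infty}(M))\subset C^{\infty}(M)$ (via hypoellipticity on the closed manifold) to control the constants $K_{1},K_{2}$ uniformly via $\max_{0\leq s\leq t}\Vert {\ee}^{-s\mathcal{A}}f\Vert_{C^{3}}$, exactly as you indicate.
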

\section{Proof of main results}
\subsection{Convergence of generators}
\begin{lm} 
\label{approximation-jyunbi}
Let $\{ \mathbb{X}_k \}_{k=1}^{\infty}$ be a sequence of partitions of $M$ satisfying 
$ |\mathbb{X}_k|  \searrow 0$ 
as $k \to \infty$, and ${\mathscr X}_{k}$ be a set of reference points associated with $\mathbb X_{k}$.
Then the sequence of spaces $L^p({\mathbb G}_{\partial} (\mathbb{X}_k,\rho))$ and 
$C_0({\mathbb G}_{\partial }(\mathbb{X}_k, \rho))$ 
together with the maps ${\mathcal P}_{{\mathbb X}_{k}}: L^p(\mathfrak m) \rightarrow 
L^p({\mathbb G}_{\partial} (\mathbb{X}_k, \rho))$ and 
$[\cdot ]_{{\mathscr X}_{k}}
: C_{0}({M}) \rightarrow C_{0}({\mathbb G}_{\partial} (\mathbb{X}_k, \rho))$ 
 approximate $L^p(\mathfrak m)$  and $C_{0}({M})$ in the sense of 
Trotter {\rm{\cite{Tro58}}}, respectively. 
Namely, for any $f \in L^p(\mathfrak m)$
\begin{align}
\| 
{\mathcal P}_{{\mathbb X}_{k}} f \|_{L^p({\mathbb G}_{\partial} (\mathbb{X}_k, \rho))} 
\leq & \| f \|_{L^p(\mathfrak m)}, \label{Lp1}\\
\lim_{k \rightarrow \infty}\| 
{\mathcal P}_{{\mathbb X}_{k}} f \|_{L^p({\mathbb G}_{\partial} (\mathbb{X}_k, \rho))} 
=& \| f \|_{L^p(\mathfrak m)},  \label{Lp2}
\end{align}
and for any $f \in C_{0}(M)$
\begin{align}
\| [f]_{{\mathscr X}_{k}} \|_{C_0({\mathbb G}_{\partial} (\mathbb{X}_k, \rho))} \leq & \| f \|_{C_0(M)},   \label{C_01}\\
\lim_{k \rightarrow \infty}\|  [f]_{{\mathscr X}_{k}} 
\|_{C_0({\mathbb G}_{\partial} (\mathbb{X}_k, \rho))} =& \| f \|_{C_0(M)}. \label{C_02}
\end{align}
\end{lm}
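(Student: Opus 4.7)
The plan is to verify the four inequalities separately, treating the norm bounds before the norm convergences. Both (\ref{Lp1}) and (\ref{C_01}) are essentially immediate: (\ref{C_01}) from $|[f]_{\mathscr X_k}(X)| = |f(x(X))| \leq \|f\|_{\infty}$, and (\ref{Lp1}) by Jensen's inequality applied inside each cell,
\begin{equation*}
|\mathcal{P}_{\mathbb X_k}f(X)|^{p}\, \mathfrak{m}(X) \leq \int_X |f|^{p}\, {\dd}{\mathfrak m},
\end{equation*}
then summed over the disjoint cells $X \in \mathbb{X}_k$ covering $M$ by (\ref{bunkatsu-disjoint}).

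For the sup-norm convergence (\ref{C_02}), I would exploit that every $f \in C_0(M)$ is uniformly continuous on the complete Riemannian manifold $M$. Given $\epsilon > 0$, pick $x_0 \in M$ with $|f(x_0)| > \|f\|_{\infty} - \epsilon$; the reference point $x(X_k(x_0))$ lies in the same cell as $x_0$, a cell of diameter at most $|\mathbb{X}_k| \searrow 0$, so $|f(x(X_k(x_0)))| \to |f(x_0)|$ as $k\to\infty$. This gives $\liminf_k \|[f]_{\mathscr X_k}\|_{C_0(\mathbb G_\partial(\mathbb X_k,\rho))} \geq \|f\|_{\infty} - \epsilon$, and combined with (\ref{C_01}) yields (\ref{C_02}) upon $\epsilon \searrow 0$.

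The main work lies in (\ref{Lp2}). I would first establish it for $f \in C_c^{\infty}(M)$, whose support lies in some compact $K$. Uniform continuity supplies a modulus $\omega_f$ such that $|f(y)-\mathcal{P}_{\mathbb X_k}f(X)| \leq \omega_f(|\mathbb X_k|)$ for every $y\in X$. Applying the elementary bound $\big||a|^{p}-|b|^{p}\big| \leq p\max(|a|,|b|)^{p-1}|a-b|$ with $a=\mathcal{P}_{\mathbb X_k}f(X)$, $b=f(y)$, and integrating in $y$ over $X$ yields
\begin{equation*}
\Big| |\mathcal{P}_{\mathbb X_k}f(X)|^{p}\mathfrak{m}(X) - \int_X |f|^{p}\,{\dd}{\mathfrak m} \Big| \leq p\,\|f\|_{\infty}^{p-1}\,\omega_f(|\mathbb X_k|)\,\mathfrak{m}(X).
\end{equation*}
Only cells $X$ with $X\cap K \neq \emptyset$ contribute; for small $|\mathbb X_k|$ these lie in a fixed compact neighborhood $K'$ of $K$, and condition \textbf{(B)} forces them to be finite in number with total volume at most $\mathfrak{m}(K')$. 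Summing and sending $k\to\infty$ completes the $C_c^{\infty}$ case.

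To upgrade to general $f \in L^{p}(\mathfrak m)$, I would use the density of $C_c^{\infty}(M)$ in $L^{p}(\mathfrak m)$: given $\epsilon>0$, pick $g\in C_c^{\infty}(M)$ with $\|f-g\|_{L^{p}(\mathfrak m)}<\epsilon$; then (\ref{Lp1}) applied to $f-g$ yields $\|\mathcal{P}_{\mathbb X_k}(f-g)\|_{L^{p}(\mathbb G_\partial(\mathbb X_k,\rho))} \leq \epsilon$ uniformly in $k$, so a standard three-epsilon triangle-inequality argument together with the already-established convergence for $g$ closes the case. The only genuine obstacle is arranging the $C_c^{\infty}$ step so that the reduction to finitely many cells via condition \textbf{(B)} is clean; everything else reduces to Jensen's inequality, uniform continuity, and density.
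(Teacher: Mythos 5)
Your proposal is correct and follows essentially the same route as the paper: H\"older/Jensen for the norm bounds (\ref{Lp1}) and (\ref{C_01}), and for (\ref{Lp2}) a reduction to $C_c^\infty(M)$ by density plus (\ref{Lp1}), followed by a uniform-continuity estimate summed over the cells meeting a fixed compact neighborhood of the support. The only cosmetic difference is that you bound $|f(y)-\mathcal{P}_{\mathbb{X}_k}f(X)|$ directly by the modulus of continuity over the cell, whereas the paper first invokes the integral-type mean value theorem (using connectivity of each cell) to replace the cell average by a point value before applying the same elementary estimate on differences of $p$-th powers.
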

\begin{proof}
The estimate (\ref{Lp1}) follows directly from H\"older's inequality. Noting that
$\{{\mathscr X}_{k} \}_{k=1}^{\infty}$
satisfies 
$d_{H}({\mathscr X}_{k}, M) \rightarrow 0$ as $k \to \infty$, we also obtain
the estimate (\ref{C_01}) and the convergence (\ref{C_02}) immediately.

To prove (\ref{Lp2}), it suffices to show that for any $\varepsilon>0$ and $f \in L^p(\mathfrak m)$ 
there exists $k_0 \in \mathbb{N}$ such that 
for any $k\geq k_0$
\begin{equation}
\| f \|_{L^p(\mathfrak m)} - \| {\mathcal P}_{{\mathbb X}_{k}} f \|_{L^{p}({\mathbb G}_{\partial} (\mathbb{X}_k, \rho))} < \varepsilon.
\label{upper}
\end{equation}
Moreover, since $C_{c}^\infty (M)$ is dense in $L^p(\mathfrak m)$, 
it suffices to show the estimate in (\ref{upper}) for any $f_0 \in C_{c}^\infty(M)$. 
Indeed, for any $f \in L^p(\mathfrak m)$ 
take $f_0 \in C_{c}^\infty(M)$ such that $\| f -f_0 \|_{L^p(\mathfrak{m})} <\varepsilon$.  Then we obtain
\begin{eqnarray}
\|f \|_{L^p(\mathfrak m)} -\| {\mathcal P}_{{\mathbb X}_{k}} f 
\|_{L^p({\mathbb G}_{\partial} (\mathbb{X}_k, \rho )) }
&=&
\| f-f_0 +f_0 \|_{L^p(\mathfrak m)} -\| {\mathcal P}_{{\mathbb X}_{k}}
( f-f_0 +f_0 ) \|_{L^p({\mathbb G}_{\partial} (\mathbb{X}_k, \rho))} 
\nonumber \\
&\leq & \| f- f_0 \|_{L^p(\mathfrak m)} +\| f_0 \|_{L^p(\mathfrak m)}
\nonumber \\
&\mbox{ }&~
 - \| {\mathcal P}_{{\mathbb X}_{k}} 
  f_0 \|_{L^p({\mathbb G}_{\partial} (\mathbb{X}_k, \rho))} 
+\| {\mathcal P}_{{\mathbb X}_{k}} 
(f-f_0) \|_{L^p({\mathbb G}_{\partial} (\mathbb{X}_k, \rho))} 
\nonumber \\
&\leq & 
2 \| f-f_0 \|_{L^p(\mathfrak m)} + \| f_0 \|_{L^p(\mathfrak m)} - \|
{\mathcal P}_{{\mathbb X}_{k}} 
 f_0 \|_{L^p({\mathbb G}_{\partial} (\mathbb{X}_k , \rho))} 
\nonumber \\
&\leq & 2\varepsilon + \| f_0 \|_{L^p(\mathfrak m)} - \|
{\mathcal P}_{{\mathbb X}_{k}} 
f_0 \|_{L^p({\mathbb G}_{\partial} (\mathbb{X}_k , \rho))}.
\nonumber
\end{eqnarray}
To prove 
$\| f_0 \|_{L^p(\mathfrak m)} - \| {\mathcal P}_{{\mathbb X}_{k}} 
 f_0 \|_{L^p({\mathbb G}_{\partial} (\mathbb{X}_k, \rho))} \searrow 0$, 
it suffices to prove that 
$$
 \| f_0 \|_{L^p(\mathfrak m)}^p - \| {\mathcal P}_{{\mathbb X}_{k}}  
  f_0 \|^p _{L^p({\mathbb G}_{\partial} (\mathbb{X}_k, \rho ))} \searrow 0$$ 
by the continuity of function $x^{1/p}$ defined on $[0, \infty)$.
Then we have 
\begin{align*}
& \| f_0 \|_{L^p(\mathfrak m)}^p - \|
{\mathcal P}_{{\mathbb X}_{k}} 
f_0 \|^p _{L^p({\mathbb G}_{\partial} (\mathbb{X}_k, \rho ))} 
 \nonumber  \\
 &=
\sum_{X \in  \mathbb{X}_k} 
\left( 
\frac{1}{\mathfrak{m}(X)} \int_{X} |f_0|^p \, {\dd}{\mathfrak m} 
- \left| \frac{1}{\mathfrak{m}(X)} \int_{X} f_0 \, {\dd}{\mathfrak m} \right|^p  
\right) \mathfrak{m}(X)\\
& \leq 
{\mathfrak m} \big(B_{2\rho}(\supp f_0) \big) 
\max_{X \subset B_{2\rho}(\supp f_0)}
\left( 
\frac{1}{{\mathfrak m}(X)} \int_{X} |f_0|^p \, {\dd}{\mathfrak m} - 
\left| \frac{1}{\mathfrak{m}(X)} \int_{X} f_0 \, {\dd}{\mathfrak m}
 \right|^p  
\right) .
\end{align*}
Since $X \in \mathbb{ X }_k$ is connected, the integral-type mean value theorem implies that 
there exists $z \in X$ such that 
\begin{equation*}
\frac{1}{\mathfrak{m}(X)} \int_{X} f_{0} \, {\dd}{\mathfrak m}=f_{0}(z).
\end{equation*}
Then we obtain
\begin{equation*}
\frac{1}{\mathfrak{m}(X)} \int_{X} |f_0|^p \, {\dd}{\mathfrak m} - 
\left| \frac{1}{\mathfrak{m}(X)} \int_{X} f_0 \, {\dd}{\mathfrak m}
\right|^p  
=\frac{1}{\mathfrak{m}(X)} \int_{X} 
\big( |f_0(y)|^p - |f_0(z ) |^p \big) \, {\dd} {\mathfrak m}(y).
\end{equation*}
Since $f_0$ is uniformly continuous, for any $\varepsilon>0$, there exists 
$\delta>0$ such that if $ \mathrm{diam} X< \delta $
then 
\begin{equation*}
| f_0(y) |^p \leq (|f_0(z)|+\varepsilon)^p.
\end{equation*}
Then we obtain for $y \in X$
\begin{equation*}
|f_0(y)|^p -|f_0(z)|^p \leq p(|f_0(z)| +\varepsilon)^{p-1} \varepsilon,
\end{equation*}
which concludes that 
\begin{equation*}
\max_{X \subset B_{2\rho }(\supp f_0)}
\left( 
\frac{1}{\mathfrak{m}(X)} \int_{X} |f_0|^p \, {\dd}{\mathfrak m} - \left| \frac{1}{\mathfrak{m}(X)} \int_{X} f_0 
\, {\dd}{\mathfrak m}
 \right|^p  
\right) 
\leq \varepsilon p\big (  \sup_{x\in M} |f_0(x)| +\varepsilon \big)^{p-1} .
\end{equation*}
Hence, the proof of the convergence in (\ref{Lp2}) is completed. 
\end{proof}
Let $f\in C^{\infty}_{c}(M)$ and $b$ be a smooth vector field on $M$. We denote by $\varphi$ the flow 
generated by $b$.
Since $\overline{U_2(\supp f)}$ is compact and $t=t(x)$ is a positive continuous function,
$$s(f):=\min \big \{ t(x) \vert \hspace{0.5mm} x\in\overline{U_2(\supp f)} \big \}$$ 
is positive, and 
$\varphi_{s}$ gives a diffeomorphism between 
$\varphi_{-s}U_2(\supp f)$ and $U_2(\supp f)$
for any $0\leq s \leq s(f)$.
We then define a \textit{generalized support of $f$ in the flow $\varphi$} by
$${S}(f):= \overline{\bigcup_{0\leq s \leq \min\{ s(f),  1\} } \varphi_{-s }U_{2} 
\big( \supp f \big)},
$$
and set $$\Vert b \Vert_{\infty, S(f)}=\max_{x\in S(f)} \vert b(x) \vert_{T_{x}M}.$$
Moreover, for a non-negative smooth potential function $V$ on $M$, we also set 
$$\Vert V \Vert_{\infty, S(f)}=\max_{x\in S(f)} V(x).$$
We emphasize that $S(f)$ is compact, $0\leq 
\Vert b \Vert_{\infty, S(f)}<\infty$ and
$0\leq \Vert V \Vert_{\infty, S(f)}<\infty$.

For $f\in C^{\infty}_{c}(M)$, 
we set 
\begin{equation*}
\| f \|_{C^k} := \max_{ |\alpha| \leq k} \sup_{x \in M} 
\left| \frac{\partial^{|\alpha|} f}{\partial^{\alpha_1} u^{(1)}  \cdots \partial^{\alpha_n} u^{(n)}}(x) \right|,
\quad k\in \mathbb N,
\end{equation*}
where $\alpha=(\alpha_1,  \cdots ,  \alpha_n)$ is the multi-index, $|\alpha |=\alpha_1+ \cdots +\alpha_n$ and 
$(u^{(1)}, \cdots ,u^{(n)})$ is the Riemannian normal coordinates at $x \in M$.

The following theorem plays a crucial role in the proof of our main results.
\begin{tm} \label{pointwise generator convergence}
Let $f\in C^{\infty}_{c}(M)$, and 
take $\rho>0$ small enough such that  
\begin{equation}
\frac{ \rho^2}{2(n+2)} \leq \min \left\{ s(f), \| V \|_{\infty, S(f)}^{-1}, 1 \right\}
\label{condition of V}
\end{equation}
and
\begin{equation*}
\rho<\mathrm{inj}_M(S(f)),
\end{equation*}
where $\mathrm{inj}_M(S(f))$ is the infimum of the injectivity radius in $S(f)$.
Then for any partition ${\mathbb X}$ satisfying
$| \mathbb{X} | <\rho/3$, there exist 
positive constants 
${K}_{1}$ and ${K}_{2}$
depending 
on $\|f \|_{C^3}$, $\Vert b \Vert_{\infty, S(f)}$, $\Vert \nabla_{b}b \Vert_{\infty, S(f)}$,
$\Vert V \Vert_{\infty, S(f)}$ and 
$S(f)$ 
such that 
\begin{align}
\left| 
\frac{2(n+2)}{\rho^2} 
(I-L_{\frac{\rho^2}{2(n+2)}})
[f]_{{\mathscr X}}(X)- 
[{\mathcal A}f ]_{{\mathscr{X}}} (X)   \right|
\leq {K}_1\frac{| \mathbb{X} |}{\rho^2}+{K}_2 \rho, \quad X \in {\mathbb X}.
\label{pointwise generator estimate}
\end{align}
\end{tm}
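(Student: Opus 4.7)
The plan is to expand both sides pointwise in the small parameter $\alpha=\rho^{2}/(2(n+2))$ at a fixed $X\in\mathbb X$ with $x=x(X)$ and to match terms up to the claimed error. First, using $\rho+|\mathbb X|<2$, $\alpha\le 1\le s(f)$ together with the definition of the generalized support $S(f)$, one verifies that if $x\notin S(f)$ then $\tilde x:=\varphi_{\alpha}(x)$ satisfies $B_{\rho+|\mathbb X|}(\tilde x)\cap\supp f=\emptyset$, so that $L_{\alpha}[f]_{\mathscr X}(X)=0=\mathcal Af(x)$. I therefore restrict attention to $x\in S(f)$, where $b$, $V$, $\nabla_{b}b$ and all derivatives of $f$ are uniformly controlled. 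Under condition \eqref{condition of V}, we have $\alpha V(x)\le 1$ and $\alpha<s(x)$, so
\[
L_{\alpha}[f]_{\mathscr X}(X)=(1-\alpha V(x))\,A,\qquad A:=\frac{1}{\mathfrak m(\mathcal N)}\sum_{Y\in N}\mathfrak m(Y)\,f(x(Y)),
\]
where $N=N_{\rho,\mathscr X}(\alpha;X)$, $\mathcal N=\mathcal N_{\rho,\mathscr X}(\alpha;X)$. Consequently
\[
\tfrac{1}{\alpha}\bigl(f(x)-L_{\alpha}[f]_{\mathscr X}(X)\bigr)=\tfrac{1}{\alpha}\bigl(f(x)-A\bigr)+V(x)\,A,
\]
and it suffices to show $(f(x)-A)/\alpha=-\Delta f(x)-bf(x)+O(K_1|\mathbb X|/\rho^{2}+K_{2}\rho)$ and $V(x)A=V(x)f(x)+O(\rho^{2}+|\mathbb X|)$.

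Next I introduce the continuous ball average $B:=\mathfrak m(B_{\rho}(\tilde x))^{-1}\int_{B_{\rho}(\tilde x)}f\,d\mathfrak m$ as an intermediate object and split $A-B$ into a discretization piece and a shape piece. The discretization piece $\sum_{Y}\mathfrak m(Y)f(x(Y))-\int_{\mathcal N}f\,d\mathfrak m$ is bounded cell-by-cell by $|\mathbb X|\,\|\nabla f\|_{\infty,S(f)}\mathfrak m(\mathcal N)$, yielding $O(|\mathbb X|)$ after dividing by $\mathfrak m(\mathcal N)$. For the shape piece, the Hausdorff-distance definition of $\sim_{\rho}$ together with $|\mathbb X|<\rho/3$ gives the sandwich $B_{\rho-2|\mathbb X|}(\tilde x)\subset\mathcal N\subset B_{\rho+|\mathbb X|}(\tilde x)$, so $\mathfrak m(\mathcal N\triangle B_{\rho}(\tilde x))=O(\rho^{n-1}|\mathbb X|)$. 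The crucial step is to subtract the constant $f(\tilde x)$ before comparing averages: since $|f-f(\tilde x)|=O(\rho)\|\nabla f\|_{\infty,S(f)}$ throughout the annulus, a short computation expanding the difference of two ratios produces $|A-B|=O(|\mathbb X|\,\|\nabla f\|_{\infty,S(f)})$, which after division by $\alpha=\rho^{2}/(2(n+2))$ contributes the desired $K_{1}|\mathbb X|/\rho^{2}$.

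The analytic part is the expansion of $B$ around $\tilde x$ in Riemannian normal coordinates, legitimate thanks to $\rho<\mathrm{inj}_{M}(S(f))$. Expanding $f$ to second order with cubic Taylor remainder controlled by $\|f\|_{C^{3}}$, and $\sqrt{\det g}(u)=1-\tfrac{1}{6}\mathrm{Ric}_{ij}u^{i}u^{j}+O(|u|^{3})$, the linear and cubic terms in $u$ vanish by symmetry of the Euclidean ball, and the scalar-curvature contribution cancels between numerator and denominator, leaving
\[
B=f(\tilde x)+\alpha\,\Delta f(\tilde x)+O(\rho^{3}\|f\|_{C^{3}}),
\]
with implied constants depending on the curvature of $M$ restricted to $S(f)$. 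Taylor expansion along the integral curve of $b$ yields $f(\tilde x)=f(x)+\alpha\,bf(x)+O(\alpha^{2}\{\|b\|_{\infty,S(f)}^{2}\|f\|_{C^{2}}+\|\nabla_{b}b\|_{\infty,S(f)}\|\nabla f\|_{\infty,S(f)}\})$ and $\Delta f(\tilde x)=\Delta f(x)+O(\alpha\|b\|_{\infty,S(f)})\|\nabla\Delta f\|_{\infty,S(f)}$. Combining these with the previous paragraph gives
\[
A=f(x)+\alpha\bigl(bf(x)+\Delta f(x)\bigr)+O(\alpha^{2})+O(\rho^{3})+O(|\mathbb X|),
\]
so dividing by $\alpha$ produces $-bf(x)-\Delta f(x)$ plus $O(\rho)+O(|\mathbb X|/\rho^{2})$ because $\alpha\asymp\rho^{2}$. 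Adding $V(x)A=V(x)f(x)+O(\|V\|_{\infty,S(f)}(\rho^{2}+|\mathbb X|))$ establishes \eqref{pointwise generator estimate}.

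The main obstacle is the careful coordination of the three different error sources. A naive bound on the shape discrepancy $\mathcal N\triangle B_{\rho}(\tilde x)$ produces only $O(|\mathbb X|/\rho)$ in the average and hence $O(|\mathbb X|/\rho^{3})$ after division by $\alpha$, which would fail to match the theorem's bound; obtaining the sharper $O(|\mathbb X|/\rho^{2})$ requires the cancellation trick of centering $f$ at $f(\tilde x)$ so that oscillation on the annulus contributes an extra factor of $\rho$. Similarly, extracting only $O(\rho)$ analytic error from a $C^{3}$ function depends on the parity cancellation of odd-order Taylor terms in the Riemannian ball average, which is why one must work in geodesic normal coordinates and enforce $\rho<\mathrm{inj}_{M}(S(f))$.
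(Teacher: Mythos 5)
Your proposal is correct and follows essentially the same route as the paper's proof: Taylor expansion of $f$ along the flow $\varphi_{\alpha}$, the sandwich $B_{\rho-|\mathbb X|}(\varphi_{\alpha}(x))\subset\mathcal N_{\rho,\mathscr X}(\alpha;X)\subset B_{\rho+|\mathbb X|}(\varphi_{\alpha}(x))$ combined with the centering trick $f-f(\varphi_{\alpha}(x))$ to get the $O(|\mathbb X|/\rho^{2})$ discretization error, and the spherical-mean expansion of the Laplacian in normal coordinates with odd-term cancellation for the $O(\rho)$ analytic error. The only difference is organizational (you factor $L_{\alpha}[f]_{\mathscr X}(X)=(1-\alpha V(x))A$ and compare $A$ with the ball average, rather than the paper's longer telescoping decomposition), and your explicit treatment of $x\notin S(f)$ is a welcome clarification.
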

\begin{proof}
We set $\delta=\frac{\rho^{2}}{2(n+2)}$ and 
fix $X \in {\mathbb X}$. 
We denote the reference point $x(X)\in {\mathscr{X}}$ by $x$ for the simplicity of notation.
Then we have
\begin{align} 
\frac{1}{\delta}  &
(I-L_{\delta})[f]_{{\mathscr X}}(X)- 
[{\mathcal A}f ]_{{\mathscr{X}}} (X) \notag\\
&=
\frac{1}{\delta} \big( f(x)- f(\varphi_{\delta} (x))+\delta bf(x) \big)
\notag \\
&\mbox{ }~~
-V(x)\big( f(x)- f(\varphi_{\delta} (x))+\delta bf(x) \big)
\notag \\
&\mbox{ }~~+\big( \Delta f(x)- \Delta f(\varphi_{\delta} (x))+\delta b\Delta f(x) \big)
\notag \\
&\mbox{ }~~
+\delta \big(V(x)bf(x)-b \Delta f(x) \big)
\notag \\
&\mbox{ }~~-
\frac{1}{\delta} L_{\delta}[f]_{{\mathscr X}}(X)-V(x)f(\varphi_{\delta} (x))
+\Delta f(\varphi_{\delta} (x))
+\frac{1}{\delta}f(\varphi_{\delta}(x)).
\label{t1}
\end{align}
Applying the Taylor expansion formula, we have
\begin{equation}
 f(x) - f(\varphi_{\delta} (x))=-\delta \int_{0}^{1}bf(\varphi_{\delta \theta} (x)){\dd}\theta
 \label{Taylor-824}
 \end{equation}
 and
\begin{equation}
f(x) - f(\varphi_{\delta} (x)) + \delta b f (x) = -\frac{\delta^2}{2} 
\int_{0}^{1}
\big\{ 
{\rm Hess} f (b, b) (\varphi_{\delta \theta} (x )) + \left( \nabla_b  b \right) f ( \varphi_{\delta \theta} (x )) \big \} {\dd}\theta,
\label{Taylor-820}
\end{equation}
where $\nabla$ and ${\rm{Hess}} f$ stand for 
the Levi-Civita connection of the Riemannian metric $g$ and 
the Hessian of $f$, respectively.
In a local coordinate $(x^{(1)}, \ldots, x^{(n)})$ of $M$ with the natural basis 
$\{ {\partial}_{i}={\partial}/{\partial}x^{(i)} \}_{i=1}^{n}$, by using the expansion 
$b=\sum_{i=1}^{n}b^{i}{\partial}_{i}$, 
we may write
$${\rm Hess} f (b, b) 
=
b^{i}b^{j} \frac{\partial^2 f}{\partial x^{(i)} \partial x^{(j)}} -b^kb^i \Gamma_{ki}^j \frac{\partial f}{\partial x^{(j)}},
\quad  
\left( \nabla_b  b \right) f 
=b^i \Big( \frac{\partial b^j}{\partial x^{(i)}} \partial_j + b^j \Gamma_{ji}^k \partial_k \Big) f,
$$
where $\{\Gamma_{ij}^{k}\}_{i,j,k=1}^{n}$ are the Christoffel symbols of the Levi-Civita connection.

It follows from (\ref{Taylor-824}) and (\ref{Taylor-820}) that
\begin{align*}
& \Big \vert \frac{1}{\delta} 
\big( f(x)- f(\varphi_{\delta} (x))+\delta bf(x) \big)
\Big \vert
\leq \frac{\delta}{2}
\big ( \Vert {\rm{Hess}} f \Vert_{\infty}+\Vert (\nabla_{b} b)f \Vert_{\infty} \big)
\end{align*}
and
\begin{align*}
\Big \vert
& -V(x)\big( f(x)- f(\varphi_{\delta} (x))+\delta bf(x) \big)
+\big( \Delta f(x)- \Delta f(\varphi_{\delta} (x))+\delta b\Delta f(x) \big)
\Big \vert
\notag \\
&\leq
2 \delta
\big(
 \Vert V \Vert_{\infty, S(f)} \Vert bf \Vert_{\infty}
+\Vert b\Delta f \Vert_{\infty}
\big).
\end{align*}
Besides, we also have
\begin{equation*}
\big \vert \delta \big( V(x)bf(x)-b \Delta f(x) \big) \big \vert 
\leq \delta \big(  \Vert V \Vert_{\infty, S(f)} \Vert bf \Vert_{\infty}+\Vert b(\Delta f) \Vert_{\infty}
\big).
\end{equation*}

Noting $\max\{(1-\delta V)_, 0 \}=(1-\delta V)$ in $S(f)$,
we expand the final line of
the right-hand side of (\ref{t1}) as 
\begin{align}
-\frac{1}{\delta} 
&
L_{\delta}
[f]_{{\mathscr X}}(X)-V(x)f(\varphi_{\delta} (x))
+\Delta f(\varphi_{\delta} (x))
+\frac{1}{\delta}f(\varphi_{\delta}(x))
\notag \\
&=\frac{\delta V(x)-1}{\delta}
\sum_{Y \in {N}_{\rho, \mathscr{X}}(\delta; X)}
\frac{ \mathfrak{m}(Y)}{ \mathfrak{m} ( \mathcal{N}_{\rho, \mathscr{X}}(\delta; X) )} f(x(Y))
\notag \\
&\mbox{ }~~~~
+\frac{1}{\delta}f(\varphi_{\delta}(x))
-V(x)f(\varphi_{\delta} (x))
+\Delta f(\varphi_{\delta} (x))
\notag \\
&=
\frac{1-\delta V(x)}{\delta \mathfrak{m} ( \mathcal{N}_{\rho,\mathscr{X}}(\delta; X) )} 
\Big \{ \sum_{Y \in  {N}_{\rho, \mathscr{X}}(\delta; X)} 
 \mathfrak{m}(Y)  \big( f(\varphi_{\delta} (x) ) - f(x(Y))  \big) 
\notag \\
&\mbox{ }
\hspace{50mm}
-\int_{B(\varphi_{\delta}(x), \rho)} \left( f(\varphi_{\delta}(x)) - f(z) \right) {\dd} \mathfrak{m}(z) \Big \}
\notag \\
&\mbox{ }~+\big(1-\delta V(x)\big)
\Big \{
\frac{1}{\delta \mathfrak{m} ( \mathcal{N}_{\rho}(\delta; X) )} 
\int_{B(\varphi_{\delta}(x), \rho)} \left( f(\varphi_{\delta}(x)) - f(z) \right) {\dd} \mathfrak{m}(z)
+\Delta f(\varphi_{\delta} (x))
\Big \}
\notag \\
 &\mbox{ }~
+\delta V(x)\Delta f(\varphi_\delta (x)).
\label{expand-2}
\end{align}
Here we easily have
\begin{align*}
\vert  \delta V(x)\Delta f(\varphi_\delta (x))
\vert
\leq \delta 
\Vert V \Vert_{\infty, S(f)} \Vert \Delta f \Vert_{\infty}\leq \Vert \Delta f \Vert_{\infty}.
\end{align*}
To estimate the first term
on the right-hand side of (\ref{expand-2}),
we recall the definition of the Riemannian integral. Indeed, 
taking into account that 
\begin{equation}
B_{\rho-|\mathbb{X} | }(\varphi_{\delta}(x) ) \subset \mathcal{N}_{\rho, \mathscr{X}}(\delta; X) \subset 
B_{\rho + |\mathbb{X}| } (\varphi_{\delta}(x)   ), 
\label{BN}
\end{equation}
we have
\begin{align*}
& 
\sum_{Y  \in N_{\rho, \mathscr{X} }(\delta; X)} 
\mathfrak{m}(Y) \big( f(\varphi_{\delta}(x)) -f(x(Y)) \big) 
\nonumber \\
&
\hspace{10mm}
=
\int_{\mathcal{N}_{\rho, \mathscr{X}}(\delta; X)} \big( f(\varphi_{\delta} (x) )- f(z) \big) {\dd} \mathfrak{m}(z) 
+\sum_{Y \in  N_{\rho, \mathscr{X}}(\delta; X) }  \int_{Y} \left( f(z)-f(x(Y)) \right) {\dd} \mathfrak{m}(z)\\
&
\hspace{10mm}
= \int_{B_{\rho} (\varphi_{\delta}(x))} \big( f(\varphi_{\delta} (x) ) -f(z) \big) {\dd} \mathfrak{m}(z)  
\nonumber \\
&
\hspace{10mm}
\mbox{  }~~~
+\int_{B_{\rho+|\mathbb{X} |} (\varphi_{\delta}(x) ) \backslash B_{\rho} (\varphi_{\delta}(x)) } 
\big( f(\varphi_{\delta}(x)) -f(z) \big) {\dd} \mathfrak{m}(z)  \\
&
\hspace{10mm}
\mbox{ }~~~ 
-\int_{B_{\rho+|\mathbb{X} |}(\varphi_{\delta}(x) ) \backslash \mathcal{N}_{\rho, \mathscr{X} }(\delta; X) }
\big( f(\varphi_{\delta}(x)) -f(z) \big) {\dd} \mathfrak{m}(z) 
\nonumber \\
&
\hspace{10mm}
\mbox{  }~~~
+\sum_{Y \in N_{\rho, \mathscr{X} }(\delta; X) }  \int_{Y} \left( f(z)-f(x(Y)) \right) {\dd} \mathfrak{m}(z). 
\end{align*}
Here we note that there exist positive constants $c_1, c_{2}$ and $c_{3}$
depending only on geometry on $S(f)$ such that for any $0<h<r<1$
\begin{align*}
\mathfrak{m}\big(B_{r+h} ( \varphi_{\delta} (x)) \big) 
- \mathfrak{m}\big( B_{r-h} (\varphi_{\delta}(x) ) \big)
& \leq c_1h r^{n-1} 
\end{align*}
and
\begin{align*}
c_3r^{n} 
\leq \mathfrak{m}\big(
B_r (\varphi_{\delta}(x) ) \big) & \leq c_2r^{n}.
\end{align*}
Combining these estimates with the assumption of $|\mathbb{X} |<\rho/3$, we obtain
\begin{align*}
& \bigg | \frac{1-\delta V(x)}{\delta \mathfrak{m}(\mathcal{N}_{\rho, \mathscr{X}} (\delta; X))} 
\Big \{ \sum_{Y  \in N_{\rho, \mathscr{X}}(\delta; X)} 
\mathfrak{m}(Y) \big( f(\varphi_{\delta}(x) )-f(x(Y)) \big) 
\\
&\mbox{ } \hspace{50mm}
 -\int_{B_{\rho} (\varphi_{\delta}(x))} \big( f(\varphi_{\delta} (x) ) -f(z) \big) {\dd} \mathfrak{m}(z) \Big \} \bigg | 
\\
&\leq \frac{1}{\delta c_3(\rho-|\mathbb{X} |)^n} 
\Big \{ 
\Big | \sum_{Y \in  N_{\rho, \mathscr{X} }(\delta; X) }  
\int_{Y} \big( f(z)-f(x(Y)) \big) {\dd} \mathfrak{m}(z) \Big |
\\
&
\hspace{35mm} 
+
2\ 
\int_{B_{\rho+|\mathbb{X}|} (\varphi_{\delta}(x)) \backslash B_{\rho-|\mathbb{X}|} (\varphi_{\delta}(x) ) } 
\big | f(\varphi_{\delta}(x)) -f(z) \big | {\dd} \mathfrak{m}(z)  
\Big \}
\\
&\leq 
 \frac{2^n}{\delta c_3\rho^n} 
\Big \{
\| f \|_{C^1} |\mathbb{X}| \mathfrak{m}\big(B_{\rho+|\mathbb{X}| }(\varphi_{\delta}(x))
\big)
\\
&\mbox{ }
\hspace{15mm} +
2 \| f \|_{C^1} (\rho+|\mathbb{X} | ) 
\Big( \mathfrak{m}\big(
B_{\rho+|\mathbb{X} |} ( \varphi_{\delta} (x) )
\big)
-
\mathfrak{m}\big(
B_{\rho-|\mathbb{X} | } (\varphi_{\delta}(x))
\big) \Big)
\Big \}
\\
&\leq 
\frac{2^n(4c_1+2^n c_2)(n+2)}{c_3} \| f \|_{C^1} \frac{|\mathbb{X} | }{\rho^2} .
\end{align*}

To estimate the second term on the right-hand side of (\ref{expand-2}), 
we now apply the spherical mean approximation 
of the Laplacian. 
Let us take a Riemannian normal coordinate system
 $(u^{(1)},\ldots, u^{(n)})$ at  $\varphi_{\delta}(x) \in M$ via exponential map 
$\exp_{\varphi_{\delta}(x)}: T_{\varphi_{\delta}(x)} M \rightarrow M$.
More precisely, for an orthonormal basis $\{{\bf{e}}_1, \ldots , {\bf{e}}_n\}$ of $T_{\varphi_{\delta}(x)} M$, 
the map
 \begin{equation*}
 (u^{(1)}, \cdots, u^{(n)}) \stackrel{E}{\longmapsto} \sum_{j=1}^n u^{(j)}{\bf{e}}_j 
 \stackrel{\exp_{\varphi_{\delta}(x)}}{\longmapsto}
 \exp_{\varphi_{\delta}(x)} \left( \sum_{j=1}^n u^{(j)} {\bf{e}}_j \right) 
 \end{equation*}
 gives a diffeomorphism between
 $B^{\mathbb{R}^n}(r)$, the open ball of radius $r$ in $\mathbb R^{n}$
centered at origin and $B_r (\varphi_{\delta}(x))$
 for small $r>0$ and then the pair $(B_r (\varphi_{\delta}(x)), 
 E^{-1}\circ \exp_{\varphi_{\delta}(x)}^{-1} )$ can be regarded 
 as a chart containing $\varphi_{\delta}(x)$.
 
Now take $z \in S(f)$ and $r=\rho <\mathrm{inj}_M(S(f))$.
Applying the Taylor expansion formula
to the function  $\tilde{f}=f\circ\exp_{\varphi_{\delta}(x)}\circ E$ on 
$B^{\mathbb{R}^n}(r) 
\subset \mathbb{R}^n$ at the origin, 
we have 
 for $u=(u^{(1)}, \cdots ,u^{(n)})=E^{-1}\circ   \exp_{\varphi_{\delta}(x)}^{-1} (z) $
 \begin{align*}
 f(z) &= \tilde{f} (u^{(1)}, \cdots , u^{(n)} )
\nonumber \\
&={\tilde f}(0)+\frac{\partial {\tilde f}}{\partial u^{(j)}}(0)u^{(j)}
+\frac{1}{2} \frac{\partial^{2} {\tilde f}}{\partial u^{(j)} \partial u^{(k)}}(0)
u^{(j)}u^{(k)}+{\mathcal T}_{jkl}({\tilde f})(u)
u^{(j)}u^{(k)}u^{(l)},
\end{align*}
where
$$
{\mathcal T}_{jkl}({\tilde f})(u)
=
\int_{0}^{1}{\dd} \theta_{1}
\int_{0}^{\theta_{1}} {\dd} \theta_{2}
\int_{0}^{\theta_{2}} {\dd} \theta_{3}
\frac{\partial^{3} {\tilde f}}{\partial u^{(j)} \partial u^{(k)} \partial u^{(l)} }(\theta_{3} u).
$$

Now we make use of the fact that the Riemannian volume element 
${\sqrt{{\rm det}({\tilde{g}} (u))}}$ in the normal coordinates has the expansion
\begin{align*}
{\sqrt{{\rm det}({\tilde{g}} (u))}}=1-\frac{1}{6} {\rm Ric}_{jk}(0) u^{(j)} u^{(k)}+
{\mathcal G}_{3}\big(B_{\rho} (\varphi_{\delta}(x)); u \big), 
\end{align*}
where $\mathrm{Ric}_{jk}(0):=\mathrm{Ric}(\varphi_{\delta}(x))(e_{j}, e_{k})$, the $(j,k)$-component 
of the Ricci curvature tensor  
at $\varphi_{\delta}(x)$, and 
${\mathcal G}_{3}\big( B_{\rho} (\varphi_{\delta}(x)); u \big)$
is the reminder term. Here we should remark that there exists a
continuous non-negative function $G$
on $M$ such that
${\mathcal G}_{3}\big( B_{\rho} (\varphi_{\delta}(x)); u \big)$
satisfies for all $|u|_{\mathbb R^{n}} < \rho$
\begin{equation}
\sup_{0<\rho'<\rho} \big \vert {\mathcal G}_{3}(B_{\rho'} (\varphi_{\delta'}(x) ); u) \big \vert \leq 
G(\varphi_{\delta}(x)) \vert u \vert_{\mathbb R^{n}}^{3},
\label{volume-reminder-est}
\end{equation} where $\delta'=\frac{(\rho')^{2}}{2(n+2)}$.
See e.g., \cite[Lemma 3.5 in Chapter II]{Sak96} and \cite[Lemma 3.4]{Sak71} for details.
We also mention here that
\begin{equation*}
 \int_{B^{\mathbb{R}^n}(\rho) } u^{(j)} {\dd} u=0, \quad  
\int_{B^{\mathbb{R}^n}(\rho)} u^{(j)} u^{(k)} {\dd}u =\delta_{jk} \frac{\omega_{n}\rho^{n+2}}{n+2},
\end{equation*}
where $\omega_{n}$ is the volume of the unit ball $B^{\mathbb{R}^n}(1)$.

We then obtain 
\begin{align*}
&
\int_{B(\varphi_{\delta}(x), \rho)} 
\left( f(\varphi_{\delta}(x)) - f(z) \right) {\dd} \mathfrak{m}(z) 
\nonumber \\
&=
\int_{B^{\mathbb{R}^n}(\rho)} 
\big( {\tilde f}(0) - {\tilde f}(u) \big) \sqrt{\det ( {\tilde g} (u))} \hspace{0.5mm} {\dd} u 
\notag \\
&=\int_{B^{\mathbb{R}^n}(\rho) }
\Big( - \frac{\partial {\tilde f}}{\partial u^{(j)} } (0) u^{(j)} 
-\frac{1}{2} \frac{\partial^2 {\tilde f}}{\partial u^{(j)} \partial u^{(k)}} (0) u^{(j)} u^{(k)}
+
{\mathcal T}_{jkl}({\tilde f})(u)
u^{(j)}u^{(k)}u^{(l)}
\Big)
\nonumber \\
&\mbox{  } \hspace{18mm} \times
\Big( 1-\frac{1}{6} {\rm Ric}_{jk}(0) u^{(j)} u^{(k)}
+
 {\mathcal G}_{3}(B_{\rho} (\varphi_{\delta}(x)); u)
\Big)
{\dd}u 
\notag\\
&= 
-\frac{\partial {\tilde f}}{\partial u^{(j)}}(0) \int_{B^{\mathbb{R}^n}(\rho) } u^{(j)} {\dd} u 
-\frac{1}{2}\frac{\partial^2 {\tilde f}}{\partial u^{(j)} u^{(k)}} (0) 
 \int_{B^{\mathbb{R}^n}(\rho) } u^{(j)} u^{(k)} {\dd} u 
+
{\mathcal R}_{f}\big(B_{\rho} (\varphi_{\delta}(x))\big)
\nonumber \\
&=-\frac{\omega_{n} \rho^{n+2}}{2(n+2)} \Delta f( \varphi_{\delta}(x))
+{\mathcal R}_{f}\big(B_{\rho} (\varphi_{\delta}(x))\big),
\end{align*}
where
\begin{align*}
{\mathcal R}_{f}\big(B_{\rho} (\varphi_{\delta}(x))) &:=
\frac{1}{6}\frac{\partial {\tilde f}}{\partial u^{(l)}}(0) \mathrm{Ric}_{jk}(0) 
\int_{B^{\mathbb{R}^n}(\rho) }  u^{(j)} u^{(k)} u^{(l)} {\dd} u
\nonumber \\
&~~~~~
+\int_{B^{\mathbb{R}^n}(\rho) }  
{\mathcal T}_{jkl}({\tilde f})(u)
u^{(j)}u^{(k)}u^{(l)} {\dd} u 
\nonumber \\
&~~~~~
-\frac{\partial {\tilde f}}{\partial u^{(j)}}(0)
 \int_{B^{\mathbb{R}^n}(\rho) } 
 u^{(j)} 
 {\mathcal G}_{3}(B_{\rho} (\varphi_{\delta}(x)); u)
 {\dd} u 
\nonumber  \\
&~~~~~
+\frac{1}{12} 
\frac{\partial^2 {\tilde f}}{\partial u^{(j)} u^{(k)}} (0) 
 \mathrm{Ric}_{rs}(0) 
\int_{B^{\mathbb{R}^n}(\rho) } 
u^{(j)} u^{(k)}
u^{(r)} u^{(s)} {\dd} u
\nonumber \\
&~~~~~
-\frac{1}{2} 
\frac{\partial^2 {\tilde f}}{\partial u^{(j)} u^{(k)}} (0) 
\int_{B^{\mathbb{R}^n}(\rho) } 
u^{(j)} u^{(k)} 
 {\mathcal G}_{3}(B_{\rho} (\varphi_{\delta}(x)); u)
{\dd}u
\nonumber \\
&~~~~~
-\frac{1}{6} \mathrm{Ric}_{rs}(0)
\int_{B^{\mathbb{R}^n}(\rho) }  
{\mathcal T}_{jkl}({\tilde f})(u)
u^{(j)}u^{(k)}u^{(l)}
u^{(r)} u^{(s)} {\dd} u 
\nonumber \\
&~~~~~
+\int_{B^{\mathbb{R}^n}(\rho)}  
{\mathcal T}_{jkl}({\tilde f})(u)
u^{(j)}u^{(k)}u^{(l)}
 {\mathcal G}_{3}(B_{\rho} (\varphi_{\delta}(x)); u)
 {\dd}u.
\end{align*}
Repeating the same calculation as above, we also have
\begin{align*}
{\mathfrak m}\big(B_{\rho} (\varphi_{\delta}(x)) \big)=\omega_{n} \rho^{n}-{\rm{Scal}}(0)\frac{\omega_{n}
\rho^{n+2}}{n+2}+\int_{B^{\mathbb R^{n}}(\rho)}
{\mathcal G}_{3}(B_{\rho}(\varphi_{\delta}(x)); u)
 {\dd} u,
\end{align*}
where ${\rm{Scal}}(0)$ stands for the scalar curvature at $\varphi_{\delta}(x)$.
Recalling (\ref{volume-reminder-est}), we observe that
$$\vert {\mathcal R}_{f}\big(B_{\rho} (\varphi_{\delta}(x)) \big)\vert
+\Big \vert
\int_{B^{\mathbb R^{n}}(\rho)}
{\mathcal G}_{3}(B_{\rho} (\varphi_{\delta}(x)); u)
 {\dd} u
\Big \vert
\leq
c_4 \omega_{n} \rho^{n+3},
$$
where the constant $c_4$ depends on $\| f \|_{C^3}$ and 
the geometry on $B_{\rho} (\varphi_{\delta}(x))$.
Using (\ref{BN}), we obtain
\begin{align*}
& \left | (1-\delta V(x)) \Big \{ \frac{1}{\delta \mathfrak{m} ( \mathcal{N}_{\rho, \mathscr{X} }(\delta; X) )} 
\int_{B_{\rho} (\varphi_{\delta}(x))} \left( 
f(\varphi_{\delta}(x)) - f(z) \right) {\dd} \mathfrak{m}(z) 
+\Delta f (\varphi_\delta (x) ) \Big \} \right|
\nonumber \\
&\leq 
\left| \frac{\omega_n \rho^n}{\mathfrak{m}(\mathcal{N}_{\rho, \mathscr{X}} (\delta; X))} -1\right| 
\cdot \vert
\Delta f (\varphi_{\delta}(x))
\vert
+\frac{c_{4}\omega_{n} \rho^{n+3}}{ \delta^2 \mathfrak{m}(\mathcal{N}_{\rho,\mathscr{X}}(\delta; X))} 
\nonumber \\
&\leq 
\frac{  
\vert
\Delta f (\varphi_{\delta}(x))
\vert
}
{\mathfrak{m}(B_{\rho-|\mathbb{X}|} (\varphi_{\delta}(x) )) }
\Big \{
\big | \omega_n \rho^n-\mathfrak{m}(B_{\rho} (\varphi_{\delta}(x) )) \big |
\nonumber \\
&\mbox{ }~~~~~~~~~
+
\mathfrak{m}\big(B_{\rho} (\varphi_{\delta}(x) )\backslash B_{\rho- |\mathbb{X}|} (\varphi_{\delta}(x) ) \big)
+\mathfrak{m}\big( \mathcal{N}_{\rho, \mathscr{X}} (\delta; X) \backslash B_{ \rho-|\mathbb{X}| } (\varphi_{\delta}(x)) \big) 
\Big \}
\nonumber \\
&\mbox{ }~~+ 
\frac{2c_4\omega_{n} (n+2)\rho^{n+1}}{\mathfrak{m}(B_{\rho-|\mathbb{X}|} (\varphi_{\delta}(x)  ))} 
\nonumber \\
&\leq 
\frac{
\big \vert \Delta f (\varphi_{\delta}(x))
\big \vert
}
{c_{3}  (\rho-|\mathbb{X}|)^{n}}
\Big(
\vert
{\rm{Scal}}(0)
\vert
\frac{\omega_n
\rho^{n+2}
}{n+2}
+
c_{4}\omega_{n} \rho^{n+3}
+2c_1 |\mathbb{X}| \rho^{n-1} 
\Big)
+ \frac{2c_4\omega_{n} (n+2)\rho^{n+1}}
{c_3(\rho-|\mathbb{X}|)^{n}}
\nonumber
\\
&\leq 
\frac{2^n}{c_3} \big(  
\vert {\rm{Scal}}(0) \vert
\frac{\omega_n \rho^{2}}{n+2}+c_{4}\omega_{n} \rho^{3}
+2c_1 \frac{|\mathbb{X} | }{\rho} \big) 
\Vert \Delta f \Vert_{\infty}
+\frac{2^{n+1}c_4\omega_{n}(n+2)}{c_3} \rho.
\end{align*}

Putting it all together, we finally obtain our desired estimate
(\ref{pointwise generator estimate}).
\end{proof}
\begin{re}
Combining smoothness of $b$, $V$ and $f$, connectivity of each 
$X\in {\mathbb X}$, and compactness of $S(f)$,
we can also show that the constants $K_1$ and $K_2$ 
in 
{\rm{(\ref{pointwise generator estimate})}}
do not depend on the choice of reference points 
$\mathscr{X}$. 
\end{re}
\begin{co}\label{generator convergence}
Let $\{ \mathbb{X}_{k} \}_{k\in \mathbb N}$ be a sequence of partitions such that $| \mathbb{X}_k | \searrow 0$ 
as $k \rightarrow \infty$.
Then for any subsequence $\{ k(\rho) \}_{\rho>0}$ of $\mathbb N$ such that
$k(\rho) \nearrow \infty$ and $|\mathbb{X}_{k(\rho )} |=o(\rho^2)$ as $\rho \searrow 0$ and 
for any $f \in C_c^\infty (M)$,
\begin{equation}
\lim_{\rho \searrow 0} 
\left\| 
\frac{2(n+2)}{\rho^2} \big(I-L_{\frac{\rho^2}{2(n+2)}
}
\big) [f]_{{\mathscr X}_{k(\rho)}} 
 - [{\mathcal A}f]_{{\mathscr X}_{k(\rho)}} 
\right\|_{C_0({\mathbb G}_{\partial} (\mathbb{X}_{k(\rho)}, \rho))}=0,
\label{gen infty}
\end{equation}
and
\begin{equation}
\lim_{\rho \searrow 0} 
\left\| 
\frac{2(n+2)}{\rho^2} \big(I-L_{\frac{\rho^2}{2(n+2)}
}
\big) {\mathcal P}_{{\mathbb X}_{k(\rho)}} f  -
{\mathcal P}_{{\mathbb X}_{k(\rho)}}
 \left( 
{\mathcal A} f \right)  
\right\|_{L^{p}({\mathbb G}_{\partial} (\mathbb{X}_{k(\rho)}, \rho))}=0.
\label{gen p}
\end{equation}
\end{co}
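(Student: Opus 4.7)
The sup-norm convergence (\ref{gen infty}) follows at once by taking the supremum in $X\in\mathbb{X}_{k(\rho)}$ of the pointwise estimate (\ref{pointwise generator estimate}) from Theorem \ref{pointwise generator convergence}: the right-hand side $K_{1}|\mathbb{X}_{k(\rho)}|/\rho^{2}+K_{2}\rho$ is uniform in $X$, and under the hypothesis $|\mathbb{X}_{k(\rho)}|=o(\rho^{2})$ both terms vanish as $\rho\searrow 0$. One only needs to observe that for $f\in C^{\infty}_{c}(M)$ the generalized support $S(f)$ is compact and fixed, so that both (\ref{condition of V}) and $\rho<\mathrm{inj}_{M}(S(f))$ are eventually satisfied, and $|\mathbb{X}_{k(\rho)}|<\rho/3$ eventually holds too.

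For the $L^{p}$-convergence (\ref{gen p}), my plan is to reduce to the sup-norm case by a triangle inequality combined with a support localization argument. Setting $\delta=\rho^{2}/(2(n+2))$ and inserting $\pm[\cdot]_{\mathscr{X}_{k(\rho)}}$, I decompose the function inside the norm as $E_{1}+E_{2}+E_{3}$, where
\begin{align*}
E_{1}&=\frac{2(n+2)}{\rho^{2}}(I-L_{\delta})\bigl(\mathcal{P}_{\mathbb{X}_{k(\rho)}}f-[f]_{\mathscr{X}_{k(\rho)}}\bigr),\\
E_{2}&=\frac{2(n+2)}{\rho^{2}}(I-L_{\delta})[f]_{\mathscr{X}_{k(\rho)}}-[\mathcal{A}f]_{\mathscr{X}_{k(\rho)}},\\
E_{3}&=[\mathcal{A}f]_{\mathscr{X}_{k(\rho)}}-\mathcal{P}_{\mathbb{X}_{k(\rho)}}(\mathcal{A}f).
\end{align*}
For each $E_{i}$ I will first bound $\|E_{i}\|_{C_{0}(\mathbb{G}_{\partial}(\mathbb{X}_{k(\rho)},\rho))}$ and then upgrade to an $L^{p}$-bound via the elementary inequality $\|E_{i}\|_{L^{p}}\leq\|E_{i}\|_{C_{0}}\bigl(\sum_{X:E_{i}(X)\neq 0}\mathfrak{m}(X)\bigr)^{1/p}$.

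The sup-norm bounds themselves are straightforward. For $E_{2}$, Theorem \ref{pointwise generator convergence} gives $\|E_{2}\|_{C_{0}}\to 0$ directly. For $E_{1}$ and $E_{3}$, the integral-form mean value theorem applied to the connected cell $X$ (exactly as in the proof of Lemma \ref{approximation-jyunbi}) produces $z(X)\in X$ with $\mathcal{P}_{\mathbb{X}_{k(\rho)}}f(X)=f(z(X))$; since $f,\mathcal{A}f\in C^{\infty}_{c}(M)$ are Lipschitz, this yields $\|\mathcal{P}_{\mathbb{X}_{k(\rho)}}f-[f]_{\mathscr{X}_{k(\rho)}}\|_{C_{0}}\leq\|\nabla f\|_{\infty}|\mathbb{X}_{k(\rho)}|$ and an analogous bound for $\mathcal{A}f$. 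Combined with the sub-Markov contractivity $\|L_{\delta}\|_{C_{0}\to C_{0}}\leq 1$, this gives $\|E_{1}\|_{C_{0}}=O(|\mathbb{X}_{k(\rho)}|/\rho^{2})=o(1)$ and $\|E_{3}\|_{C_{0}}=O(|\mathbb{X}_{k(\rho)}|)=o(1)$.

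The step I expect to be most delicate is the support localization that converts the $C_{0}$-bounds into $L^{p}$-bounds. The key observation is that $E_{i}(X)\neq 0$ forces either $x(X)\in\mathrm{supp}\,f$ (so that $[f]_{\mathscr{X}}(X)$ or $[\mathcal{A}f]_{\mathscr{X}}(X)$ is nonzero) or $\mathcal{N}_{\rho,\mathscr{X}}(\delta;X)\cap\mathrm{supp}\,f\neq\emptyset$ (the only way an $L_{\delta}$-term can be nonzero, because $\mathrm{supp}(\mathcal{A}f)\subset\mathrm{supp}\,f$). The latter constraint places $x(X)$ within distance $\|b\|_{\infty,S(f)}\delta+\rho+|\mathbb{X}_{k(\rho)}|$ of $\mathrm{supp}\,f$, which for small $\rho$ sits inside $S(f)$. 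Consequently $\sum_{X:E_{i}(X)\neq 0}\mathfrak{m}(X)\leq\mathfrak{m}\bigl(U_{1}(S(f))\bigr)<\infty$ uniformly in $\rho$, and (\ref{gen p}) follows.
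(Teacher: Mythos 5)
Your proposal is correct and follows essentially the same route as the paper: \eqref{gen infty} is obtained by taking the supremum in the pointwise estimate \eqref{pointwise generator estimate}, and \eqref{gen p} is reduced to the $C_0$-bounds via exactly the same three-term decomposition $E_1+E_2+E_3$, the contraction bound $\Vert I-L_{\frac{\rho^2}{2(n+2)}}\Vert\leq 2$, the estimate $\Vert \mathcal{P}_{\mathbb{X}_{k(\rho)}}f-[f]_{\mathscr{X}_{k(\rho)}}\Vert_{C_0}\leq \Vert f\Vert_{C^1}\vert\mathbb{X}_{k(\rho)}\vert$, and the uniform bound $\mathfrak{m}(S(f))$ on the measure of the support. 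The only (harmless) difference is that you localize the support term by term and spell out the flow-displacement argument more explicitly than the paper does.
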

\begin{proof}
Applying Theorem
\ref{pointwise generator convergence}, 
we easily have (\ref{gen infty}).
Next, we prove (\ref{gen p}).
It is easy to see 
\begin{align*}
\supp 
\left( \frac{2(n+2)}{\rho^2} (I-L_{\frac{\rho^2}{2(n+2)}
}) {\mathcal P}_{{\mathbb X}_{k}}f  
-
{\mathcal P}_{{\mathbb X}_{k}}
\left( {\mathcal A}  f\right)   \right)
\end{align*}
for any $0<\rho \leq 1$ and $|\mathbb{X}_k | \leq 1$.
Then 
\begin{equation*}
\mathfrak{m} \left( 
\supp\left( \frac{2(n+2)}{\rho^2} (I-L_{\frac{\rho^2}{2(n+2)}
}) 
{\mathcal P}_{{\mathbb X}_{k}}f  -
{\mathcal P}_{{\mathbb X}_{k}} \left( {\mathcal A} f \right)   \right) \right)
\end{equation*}
is uniformly bounded by the finite constant ${\mathfrak m}(S(f))$ 
for all  $0<\rho \leq 1$ and $|\mathbb{X}_k |  \leq 1$.
Hence we obtain
\begin{align}
& 
\hspace{-5mm}
\left\| 
\frac{2(n+2)}{\rho^2} (I-L_{\frac{\rho^2}{2(n+2)}
}) 
{\mathcal P}_{{\mathbb X}_{k(\rho)}} f  -
{\mathcal P}_{{\mathbb X}_{k(\rho)}}
\left( {\mathcal A} f\right)  
\right\|_{L^{p}({\mathbb G}_{\partial} (\mathbb{X}_{k(\rho)}, \rho))}
\notag \\
& \leq {\mathfrak m}(S(f))^{1/p}
\left\| 
\frac{2(n+2)}{\rho^2} (I-L_{\frac{\rho^2}{2(n+2)}
}) 
{\mathcal P}_{{\mathbb X}_{k(\rho)}} f  -{\mathcal P}_{{\mathbb X}_{k(\rho)}}
 \left( {\mathcal A} f\right)  
\right\|_{C_0({\mathbb G}_{\partial} (\mathbb{X}_{k(\rho)}, \rho))}
\notag \\
& \leq  {\mathfrak m}(S(f))^{1/p} 
\frac{2(n+2)}{\rho^2} 
\left\| (I-L_{\frac{\rho^2}{2(n+2)}}) \left(
{\mathcal P}_{{\mathbb X}_{k(\rho)}} f  -[f]_{\mathscr{X}_{k(\rho)}} \right)
\right\|_{C_0({\mathbb G}_{\partial} (\mathbb{X}_{k(\rho)}, \rho))} 
\notag
\\
&\quad + {\mathfrak m}(S(f))^{1/p}  \left\| 
\frac{2(n+2)}{\rho^2} \big(I-L_{\frac{\rho^2}{2(n+2)}
}
\big) [f]_{{\mathscr X}_{k(\rho)}} 
 - [{\mathcal A}f]_{{\mathscr X}_{k(\rho)}} 
\right\|_{C_0({\mathbb G}_{\partial} (\mathbb{X}_{k(\rho)}, \rho))} 
\notag \\
&\quad + {\mathfrak m}(S(f))^{1/p}  \left\| [{\mathcal A}f]_{{\mathscr X}_{k(\rho)}} 
-\mathcal{P}_{\mathbb{X}_{k(\rho)}} (\mathcal{A} f )
\right\|_{C_0({\mathbb G}_{\partial} (\mathbb{X}_{k(\rho)}, \rho))}.
\label{p infty}
\end{align}
We observe that 
$\Big \| I-L_{\frac{\rho^2}{2(n+2)}} \Big \|_{
C_0({\mathbb G}_{\partial} (\mathbb{X}_{k(\rho)}, \rho))
\to 
C_0({\mathbb G}_{\partial} (\mathbb{X}_{k(\rho)}, \rho))
} \leq 2$ and for any $X \in \mathbb{X}_{k(\rho)}$
\begin{align*}
\mathcal{P}_{\mathbb{X}_{k(\rho)}} ( f )(X) - [f]_{{\mathscr X}_{k(\rho)}}  (X) 
&= \frac{1}{\mathfrak{m}(X)} \int_{X} ( f(z)- f(x(X))) {\dd} \mathfrak{m}(z) \\
& \leq \sup_{z \in X} | f(z)- f(x(X)) | \\
& \leq \| f \|_{C^1} | \mathbb{X}_{k(\rho)} |.
\end{align*}
Then the first and the third term in (\ref{p infty}) can be estimated by
\begin{align*}
& 
  {\mathfrak m}(S(f))^{1/p} 
\frac{2(n+2)}{\rho^2} 
\left\| (I-L_{\frac{\rho^2}{2(n+2)}}) \left(
{\mathcal P}_{{\mathbb X}_{k(\rho)}} f  -[f]_{\mathscr{X}_{k(\rho)}} \right)
\right\|_{C_0({\mathbb G}_{\partial} (\mathbb{X}_{k(\rho)}, \rho))}  \\
&
\hspace{5mm}
\leq 2{\mathfrak m}(S(f))^{1/p} 
\frac{2(n+2)}{\rho^2} \| f \|_{C^1} | \mathbb{X}_{k(\rho)} |,  
\end{align*}
and
$$
 {\mathfrak m}(S(f))^{1/p}  \left\| [{\mathcal A}f]_{{\mathscr X}_{k(\rho)}} 
-\mathcal{P}_{\mathbb{X}_{k(\rho)}} (\mathcal{A} f )
\right\|_{C_0({\mathbb G}_{\partial} (\mathbb{X}_{k(\rho)}, \rho))} 
 \leq {\mathfrak m}(S(f))^{1/p} \| \mathcal{A}f \|_{C^1} | \mathbb{X}_{k(\rho)} |,
$$
respectively. 
Since $|\mathbb{X}_{k(\rho)}| =o(\rho^2)$, these terms converges to $0$. Consequently, (\ref{gen p}) is deduced from (\ref{gen infty}).
\end{proof}

\subsection{Convergence of semigroups}
\begin{proof}[{\bfseries Proof of Theorems {{\ref{Main-1}}} and {{\ref{Main-3}}}}]
Combining Lemma \ref{approximation-jyunbi}, Corollary \ref{generator convergence}
with the condition {\bf{(A)}} (resp. 
{\bf{(A1)$_{\bm{p}}$}}
and
{\bf{(A2)$_{\bm{p}}$}}), 
we may apply Trotter's approximation theorem (e.g., \cite{Tro58, Kur69}) to have
(\ref{Main conv1}) (resp. (\ref{Main conv2})).

Now we prove Theorem \ref{Main-3}. Noting that
the hypoellipticity of the elliptic operator ${\mathcal A}$ implies
${\rm e}^{-t{\mathcal A}}(C^{\infty}(M)) \subset C^{\infty}(M)$ for all $t\geq 0$,
we may apply Namba \cite{Nam22}.
For given $\rho>0$, we take $k(\rho) \in \mathbb N$ such that $\vert {\mathbb X}_{k(\rho)} \vert <\rho^{2+\alpha}$.
Combining Theorem \ref{pointwise generator convergence} with \cite[Theorem 1]{Nam22},
we have
\begin{align}
&
\Big \Vert
L_{\frac{\rho^2}{2(n+2)}}
^{\lfloor \frac{2(n+2)}{\rho^2}t \rfloor} 
[f]_{{\mathscr X}_{k(\rho)}}
-\big[ {\ee}^{-t\mathcal A }  f \big]_{{\mathscr X}_{k(\rho)}}
\Big \Vert_{C_0({\mathbb G}_{\partial} (\mathbb{X}_{k(\rho)}, \rho))}
\nonumber \\
&
\leq 
\sqrt{\frac{t\rho^{2}}{2(n+2)}} \Big( {K}_{1}(0)
\frac{\vert {\mathbb X}_{k(\rho)}\vert }{\rho^2}+{K}_{2}(0) \rho
+\Vert {\mathcal A}f \Vert_{\infty}
 \Big)
  \nonumber \\
 &\mbox{~~}
+\frac{\rho^{2}}{2(n+2)}
\Big( {K}_{1}(0)\frac{\vert {\mathbb X}_{k(\rho)}\vert }{\rho^2}+{K}_{2}(0) \rho
+\Vert {\mathcal A}f \Vert_{\infty}
 \Big)
+\int_{0}^{t} \Big ( {K}_{1}(s)
\frac{\vert {\mathbb X}_{k(\rho)}\vert }{\rho^2}
+{K}_{2}(s)
\rho \Big ) {\dd}s
\nonumber \\
&\leq {C}' \big \{t^{1/2}\rho+{\rho}^2+t \max_{0\leq s \leq t} \big({K}_{1}(s))
\rho^{\alpha}
+
t \max_{0\leq s \leq t} \big({K}_{2}(s)) \rho 
\big \}
= {C} \rho^{\alpha \wedge 1},
\quad f\in C^{\infty}(M),~t>0,
\nonumber
\end{align}
where ${K}_{i}(s)={K}_{i}
(\Vert {\ee}^{-s{\mathcal A}}f \Vert_{C^{3}}, 
\Vert b \Vert_{\infty}, \Vert \nabla_{b}b \Vert_{\infty}, \Vert V \Vert_{\infty})$, $i=1,2$, $s\geq 0$, are positive
constants appearing in Theorem \ref{pointwise generator convergence}. This gives our desired estimate
(\ref{Main conv3}).
\end{proof}
\begin{re} In the proof of Theorem {\rm{\ref{Main-3}}}, it is a 
key point to find a good
core ${\mathfrak D}$ such that ${\ee}^{-t{\mathcal A}} ({\mathfrak D}) \subset {\mathfrak D}$.
In the case where $M$ is compact, as mentioned above, it suffices to put ${\mathfrak D}=C^{\infty}(M)$.
On the other hand, in the case where $M$ is non-compact, this problem is not so trivial.
We expect that
$$ {\mathfrak D}=\{ f\in C_{0}(M) \cap C^{3}_{b}(M); {\mathcal A}f\in C_{0}(M) \}$$
is a candidate of 
such a core. To check the stability of ${\mathfrak D}$ under the operation 
${\ee}^{-t{\mathcal A}}$, we need to show boundedness of
the third order derivatives of ${\ee}^{-t{\mathcal A}}f$ for $f\in {\mathfrak D}$.
(Note that the first and the second order derivatives of ${\ee}^{-t{\mathcal A}}f$ is obtained by
{\rm{\cite{Tho19, Li21}}} under several conditions on curvature and the derivative of the potential
function $V$.)
On the other hand, it is not so difficult to have  
{\rm{(\ref{pointwise generator estimate})}} for $f\in \mathfrak D$ because the coefficients ${K}_{1}$ and 
${K}_{2}$ depend on the derivatives up to the third order. Hence we conjecture that 
Theorem {\rm{\ref{Main-3}}} still holds in the case where $M$ is non-compact by imposing
additional conditions as mentioned above. 
We will discuss this problem in the future.
\end{re}
\begin{proof}[{\bfseries Proof of Corollary \ref{Main-2}}]
First, we prove (1). For $x \in M$, let $y=x(X_{k(\rho)}(x))\in X_{k(\rho)} \in \mathbb{X}_{k(\rho)} $.
Then we obtain
\begin{align}
& \left| {\ee}^{-t{\mathcal A}}f(x)-
L_{\frac{\rho^2}{2(n+2)}}^{\lfloor \frac{2(n+2)}{\rho^2}t \rfloor}
[f]_{{\mathscr X}_{k(\rho)}}( X_{k(\rho)} (x)) \right|   \notag \\
\leq & \left| {\ee}^{-t{\mathcal A}}f(x)-{\ee}^{-t{\mathcal A}}f(y)
\right| 
+ \left| 
\left[ {\ee}^{-t{\mathcal A}}f \right]_{\mathscr{X}_{k(\rho)}} (X_{k(\rho)}(x)) 
-
L_{\frac{\rho^2}{2(n+2)}}^{\lfloor \frac{2(n+2)}{\rho^2}t \rfloor}
[f]_{{\mathscr X}_{k(\rho)}}( X_{k(\rho)}(x)) 
\right| 
\notag \\
\leq  &
\left| {\ee}^{-t{\mathcal A}}f(x)-{\ee}^{-t{\mathcal A}}f(y)
\right|
+ \left\|
\left[ {\ee}^{-t{\mathcal A}}f \right]_{\mathscr{X}_{k(\rho)}}
-
L_{\frac{\rho^2}{2(n+2)}}^{\lfloor \frac{2(n+2)}{\rho^2}t \rfloor}
[f]_{{\mathscr X}_{k(\rho)}}
\right\|_{C_0(\mathbb{G}_{\partial} (\mathbb{X}_{k(\rho)}, \rho))} .
\label{pointwise1}
\end{align}
Because the function ${\ee}^{-t{\mathcal A}}f$ is continuous and 
$d(x, y) \leq |\mathbb{X}_{k(\rho)} | \rightarrow 0$ as $\rho \rightarrow 0$, 
the first term of the right-hand side in (\ref{pointwise1}) converges to $0$. 
By using (\ref{Main conv1}), the second term in (\ref{pointwise1}) converges to $0$. Thus
we obtain (\ref{pointwise conv}).

Next, we prove (2).
\begin{align}
&\left| \int_{U} {\ee}^{-t{\mathcal A}}f(x) {\dd} \mathfrak{m}(x) -\sum_{X\in {\mathbb X}_{k(\rho)},  X\subset U}
L_{\frac{\rho^2}{2(n+2)}}^{\lfloor \frac{2(n+2)}{\rho^2}t \rfloor}
{\mathcal P}_{{\mathbb X}_{k(\rho)}} f(X) 
{\mathfrak m}(X) \right| \notag \\ 
\leq &
\left| 
\int_{U} {\ee}^{-t{\mathcal A}}f(x) {\dd} \mathfrak{m}(x) - \sum_{X \in \mathbb{X}_{k(\rho)}, X \subset U} 
\int_X  {\ee}^{-t{\mathcal A}}f(x) {\dd} \mathfrak{m}(x) \right| \notag \\
+& \left| \sum_{X \in \mathbb{X}_{k(\rho)}, X \subset U} 
\int_X {\ee}^{-t{\mathcal A}} f(x) {\dd} \mathfrak{m}(x) 
-\sum_{X\in {\mathbb X}_{k(\rho)},  X\subset U}
L_{\frac{\rho^2}{2(n+2)}}^{\lfloor \frac{2(n+2)}{\rho^2}t \rfloor}
{\mathcal P}_{{\mathbb X}_{k(\rho)}} f(X) 
{\mathfrak m}(X)
\right| .
\label{mean est1}
\end{align}
 Because $U$ is a bounded open set and $| \mathbb{X}_{k(\rho)}|  \rightarrow 0$ as $\rho \rightarrow 0$, 
\begin{equation*}
\left| 
\int_{U} {\ee}^{-t{\mathcal A}}f(x) {\dd} \mathfrak{m}(x) - \sum_{X \in \mathbb{X}_{k(\rho)}, X \subset U} 
\int_X  {\ee}^{-t{\mathcal A}}f(x) {\dd} \mathfrak{m}(x) \right| \rightarrow 0  \quad (\rho \rightarrow 0).
\end{equation*}
Then the first term of the right-hand side in (\ref{mean est1}) converges to $0$. 

By using  H\"older's inequality, we obtain
\begin{align*}
& \left| \sum_{X \in \mathbb{X}_{k(\rho)}, X \subset U} 
\int_X  {\ee}^{-t{\mathcal A}}f(x) {\dd} \mathfrak{m}(x)-\sum_{X\in {\mathbb X}_{k(\rho)},  X\subset U}
L_{\frac{\rho^2}{2(n+2)}}^{\lfloor \frac{2(n+2)}{\rho^2}t \rfloor}
{\mathcal P}_{{\mathbb X}_{k(\rho)}} f(X) 
{\mathfrak m}(X) \right| \\
=& \left| \sum_{X \in \mathbb{X}_{k(\rho)}, X \subset U} 
\left( \frac{1}{\mathfrak{m}(X)}  \int_X {\ee}^{-t{\mathcal A}} f(x) {\dd} \mathfrak{m}(x)-
L_{\frac{\rho^2}{2(n+2)}}^{\lfloor \frac{2(n+2)}{\rho^2}t \rfloor}
{\mathcal P}_{{\mathbb X}_{k(\rho)}} f(X)  \right)  {\mathfrak m}(X) \right| \\
\leq & \left( 
 \sum_{X \in \mathbb{X}_{k(\rho)}, X \subset U} 
 \left| 
 \frac{1}{\mathfrak{m}(X)}  \int_X {\ee}^{-t{\mathcal A}} f(x) {\dd} \mathfrak{m}(x)-
L_{\frac{\rho^2}{2(n+2)}}^{\lfloor \frac{2(n+2)}{\rho^2}t \rfloor}
{\mathcal P}_{{\mathbb X}_{k(\rho)}} f(X)   
 \right|^p {\mathfrak m}(X) 
\right)^{1/p} \mathfrak{m}(U)^{1/q} \\
\leq &
\Big \Vert 
{\mathcal P}_{{\mathbb X}_{k(\rho)}} ({\ee}^{-t\mathcal A}  f) -
L_{\frac{\rho^2}{2(n+2)}
}^{\lfloor \frac{2(n+2)}{\rho^2}t \rfloor} {\mathcal P}_{{\mathbb X}_{k(\rho)}} f
\Big \Vert_{L^p({\mathbb G}_{\partial} (\mathbb{X}_{k(\rho)}, \rho))} 
\mathfrak{m}(U)^{1/q}.
\end{align*}
Hence (\ref{Main conv2}) implies that the second term of the right-hand side 
in (\ref{mean est1}) converges to $0$ as $\rho \rightarrow 0$. 
Then the proof of (\ref{mean conv}) is completed. 
\end{proof}
%
\section{Sufficient conditions for 
{\bf{(A)}},  {\bf{(A1)$_{\bm{p}}$}} and {\bf{(A2)$_{\bm{p}}$}}
}
In this section, we 
discuss sufficient conditions for conditions
{\bf{(A)}}, {\bf{(A1)$_{p}$}} and {\bf{(A2)$_{p}$}} on 
the drifted Schr\" odinger operator
${\cal A}_{V}=-\Delta -b +V$ given in the Introduction.
As mentioned in the proof of Theorem 1.3, 
these conditions hold in the case where $M$ is compact.
Hence, in this section, we focus on the case where
$M$ is non-compact.
Although the non-negativity of the
the potential function $V$ was always assumed in previous sections,
we discuss our problems without assuming it in advance in this section.

First of all, we give a basic criterion for condition {\bf{(A)}}
essentially due to Seeley \cite[Theorem 2]{See84}.
We give an outline of the proof for the reader's convenience.

\begin{pr}\label{kihon-pr1}
Let $M$ be a non-compact complete smooth Riemannian manifold, 
and $V$ be a non-negative smooth function on $M$.
Suppose that there exists a sequence of
smooth cut-off functions $\{ \chi_{m} \}_{m=1}^{\infty} \subset C^{\infty}_{c}(M)$
satisfying 
the following three conditions:
\begin{itemize}
\setlength{\leftskip}{0.5cm}
\item[\bf{(a1):}]
$0\leq \chi_{m}(x) \leq 1$ for all $x\in M$ and $m\in {\mathbb N}$.
\item[\bf{(a2):}]
For every compact set $K\subset M$, there exists an integer $m_{0}(K)$ such that 
\\
$\chi_{m}(x)=1$ for all $x\in K$ and $m \geq m_{0}(K)$.
\item[\bf{(a3):}]
There exists a constant $C>0$ such that 
\begin{equation} 
(\Delta+b)\chi_{m}(x) \leq C
\label{keyestimate-825}
\end{equation}
for all $x\in M$ 
and $m\in \mathbb N$.
\end{itemize}
Then condition {\bf{(A)}} holds.
\end{pr}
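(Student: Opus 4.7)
The plan is to argue by a Hahn--Banach duality contradiction combined with Kato's inequality. Suppose, toward a contradiction, that $(\lambda + \mathcal{A})(C^{\infty}_{c}(M))$ fails to be dense in $C_{0}(M)$ for some $\lambda > 0$ to be chosen at the end. By Hahn--Banach together with the Riesz representation of $C_{0}(M)^{*}$ as the space of finite signed Radon measures on $M$, there exists a nonzero finite signed Radon measure $\mu$ satisfying
\begin{equation*}
\int_{M} (\lambda + \mathcal{A})\varphi \, d\mu = 0 \qquad \text{for all } \varphi \in C^{\infty}_{c}(M),
\end{equation*}
which is precisely $(\lambda + \mathcal{A}^{\ast})\mu = 0$ in $\mathcal{D}'(M)$, where $\mathcal{A}^{\ast} = -\Delta + b + (\mathrm{div}\, b) + V$ is the formal $L^{2}(\mathfrak{m})$-adjoint of $\mathcal{A}$. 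Since $\lambda + \mathcal{A}^{\ast}$ is elliptic with smooth coefficients, interior elliptic regularity forces $\mu$ to have a smooth density $u$; moreover $u \in L^{1}(\mathfrak{m})$ because $\mu$ is a finite measure.

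Next I pass from the signed function $u$ to $|u|$. Rewriting the equation as $\Delta u = b u + (\mathrm{div}\, b + V + \lambda) u$ and using $\mathrm{sgn}(u)\, bu = b|u|$ (both sides vanish on $\{u=0\}$ and agree elsewhere), the classical Kato inequality yields
\begin{equation*}
\Delta |u| \;\geq\; b|u| + (\mathrm{div}\, b + V + \lambda)\,|u| \qquad \text{in } \mathcal{D}'(M).
\end{equation*}
Pairing this against the non-negative cut-off $\chi_{m} \in C^{\infty}_{c}(M)$ supplied by \textbf{(a1)} and integrating by parts with respect to $d\mathfrak{m}$ (legitimate because $\chi_{m}$ is compactly supported), the contributions containing $\mathrm{div}\, b$ cancel on the two sides and I am left with
\begin{equation*}
\int_{M} |u| \, (\Delta + b)\chi_{m} \, d\mathfrak{m}
\;\geq\; \int_{M} \chi_{m}\, (V + \lambda)\, |u| \, d\mathfrak{m}.
\end{equation*}

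To conclude, hypothesis \textbf{(a3)} bounds the left-hand side above by $C\,\|u\|_{L^{1}(\mathfrak{m})}$, while \textbf{(a1)} and \textbf{(a2)} guarantee $0 \leq \chi_{m} \leq 1$ and $\chi_{m}(x) \to 1$ for every $x \in M$. Fatou's lemma applied to the non-negative right-hand side, combined with the assumption $V \geq 0$, therefore gives
\begin{equation*}
\lambda \,\|u\|_{L^{1}(\mathfrak{m})} \;\leq\; \int_{M} (V + \lambda)\, |u| \, d\mathfrak{m} \;\leq\; C\,\|u\|_{L^{1}(\mathfrak{m})}.
\end{equation*}
Choosing any $\lambda > C$ forces $u \equiv 0$, hence $\mu = 0$, contradicting the choice of $\mu$ and proving condition \textbf{(A)}. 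The most delicate step will be the distributional Kato inequality for the smooth $L^{1}$ density $u$ and the clean verification that the $\mathrm{div}\, b$ terms indeed cancel in the integration by parts against $\chi_{m}$; once this bookkeeping is in order, the passage to the limit and the final contradiction are elementary.
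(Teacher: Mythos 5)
Your proposal is correct and follows essentially the same route as the paper: Hahn--Banach duality plus the Riesz representation, elliptic regularity to get a smooth $L^{1}$ density $u$, Kato's inequality to pass to $|u|$, and testing the resulting differential inequality against the cut-offs $\chi_{m}$ so that only $(\Delta+b)\chi_{m}$ survives (your cancellation of the $\mathrm{div}\,b$ terms is exactly the paper's step of moving $\mathcal{A}_{0}^{*}$ onto $\chi_{m}$). The only real difference is the endgame: you take $\lambda>C$ and let Fatou's lemma produce $\lambda\Vert u\Vert_{L^{1}(\mathfrak m)}\leq C\Vert u\Vert_{L^{1}(\mathfrak m)}$, whereas the paper fixes $\lambda=1$ and instead splits the $L^{1}$ mass of $u$ between a compact set $K$ carrying a $(1-\varepsilon)$-fraction of it --- on which $(\Delta+b)\chi_{m}$ vanishes for large $m$ by \textbf{(a2)} --- and its complement, arriving at $(1-\varepsilon)\Vert u\Vert_{L^{1}(\mathfrak m)}\leq C\varepsilon\Vert u\Vert_{L^{1}(\mathfrak m)}$. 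Both endgames are legitimate since condition \textbf{(A)} only asks for density of the range for \emph{some} $\lambda>0$; your version is marginally cleaner in that it needs only pointwise convergence $\chi_{m}\to 1$ rather than $\chi_{m}\equiv 1$ on a neighborhood of $K$, at the (harmless) cost of a $\lambda$ depending on the constant in \textbf{(a3)}.
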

\begin{proof}
To prove that $(1+\mathcal{A}_{V})(C_c^\infty(M))$ is dense in $C_0 (M)$,
it 
suffices 
to show that $\nu \in C_{0}(M)^{*}$ equals to $0$ provided 
\begin{equation}
\subscripts
 {C_{0}(M)^*}
{\big \langle \nu, (1+{\mathcal A}_{V}) \varphi
 \big \rangle}
{C_{0}(M)}=0 \quad \mbox{for all }\varphi \in C_{c}^{\infty}(M).
\label{nu-weak form}
\end{equation}
By the Riesz-Markov-Kakutani theorem, $\nu \in C_{0}(M)^{*}$ is identified with
a finite signed Borel measure ${\dd} \nu$ on $M$. Moreover combining (\ref{nu-weak form}) with 
the hypoellipticity of elliptic operator ${\mathcal A}_{V}$, we have
${\dd}\nu(x)=u(x) {\dd}{\mathfrak m}(x)$,
where $u\in C^{\infty}(M)\cap L^{1}({\mathfrak m})$ (see e.g., \cite[Proposition 4.5]{IW89}).
Hence we may rewrite 
(\ref{nu-weak form}) as 
\begin{equation}
\int_M u(x) \, (1+ \mathcal{A}_{V})\varphi (x) \, {\dd} {\mathfrak m}(x) =0
\quad \mbox{for all }
\varphi \in C^{\infty}_{c}(M).
\label{dense}
\end{equation}
Since $u\in L^1({\mathfrak m})$, for any  $0<\varepsilon<1$, there exists a compact set $K=K_{\varepsilon} \subset M$ 
such that 
\begin{equation}
\int_{K^c} |u| \, {\dd} {\mathfrak m} \leq \varepsilon \int_{M} |u| \, {\dd}\mathfrak m,
\label{KcM}
\end{equation}
where $K^c$ is the complement of $K$. Namely,
\begin{equation}
\int_{K} |u| \, {\dd}{\mathfrak m} \geq (1-\varepsilon) \int_{M} |u| \, {\dd}\mathfrak m.
\label{KM}
\end{equation}
By {\bf{(a1)}}, {\bf{(a2)}} and (\ref{dense}), we obtain
for all $m \geq m_0(K)$
\begin{align}
\int_{K} |u| \, {\dd}{\mathfrak m} & \leq  
\int_{M} \chi_m(x) |u(x)| \, {\dd}{\mathfrak m}(x)\notag\\
&=  {\color{red}{-}} \int_{M} \chi_m (x) \mathcal{A}^{*}_{V} u(x)  \mathrm{sgn}(u(x)) \, {\dd} {\mathfrak m}(x) \notag\\
&\leq -\int_{M} \chi_m (x) \mathcal{A}^{*}_{0} u(x)  \mathrm{sgn}(u(x)) \, {\dd} {\mathfrak m}(x) \notag\\
&\leq -\int_{M} \chi_{m}(x)\mathcal{A}^{*}_{0} \vert u(x)  \vert \, {\dd} {\mathfrak m}(x) \notag\\
&= - \int_{M} \mathcal{A}_{0}\chi_m (x) |u(x)| \, {\dd} {\mathfrak m}(x)
=  \int_{M} |u(x)| (\Delta +b) \chi_m (x) \, {\dd} {\mathfrak m}(x),
\label{KM-2}
\end{align}
where $$ {\rm{sgn}}(a):=\left\{\begin{array}{lc}
a/\vert a \vert
& \mbox{if }  a\neq 0 \\
0 & \mbox{if } a=0
\end{array}
 \right. 
 ,
 \quad \mathcal{A}^{*}_{V} u=-\Delta u+{\rm{div}}(ub)+Vu$$
and
we used the non-negativity of $V$ for the third line.
We also used 
Kato's inequality 
(cf. \cite[Theorem 2]{See84}) 
$$-\mathcal{A}^{*}_{0}\vert u \vert \geq {\rm{sgn}}(u) (-{\mathcal A}^{*}_{0} u)$$
for the fourth line.

We note here that {\bf{(a2)}} implies
\begin{equation*}
(\Delta +b)\chi_m (x) = 0,  \quad x \in K
\end{equation*}
for all $m\geq m_{0}(K)$. Then by {\bf{(a3)}}, we have
\begin{align}
 \int_{M} |u(x)| (\Delta +b) \chi_m (x) \, {\dd} {\mathfrak m}(x) 
 \leq C \int_{K^{c}}
  |u| 
  \, {\dd} {\mathfrak m}. 
  \label{KM-3}
 \end{align}
Combining (\ref{KcM}), (\ref{KM}), (\ref{KM-2}) with (\ref{KM-3}),
we obtain
\begin{align*}
(1-\varepsilon ) \int_M |u| \, {\dd}{\mathfrak m} & \leq 
 \int_{K} |u| \, {\dd} {\mathfrak m} \\
& \leq   C \int_{K^c} |u| {\dd}{\mathfrak m} 
\leq C\varepsilon \int_M |u| \, {\dd}{\mathfrak m},
\end{align*}
that is, 
\begin{equation*}
0\leq \left \{ C\varepsilon  - (1-\varepsilon)  \right \}  \int_M |u| \, {\dd} {\mathfrak m}.
\end{equation*}
We now take $\varepsilon >0$ small enough.
Then $u$ must satisfy
\begin{equation*}
\int_M |u| \, {\dd}{\mathfrak m}=0,
\end{equation*}
which implies $u=0$. Hence 
we complete the proof.
\end{proof}

In $L^{p}(\mathfrak m)$-setting, the following criterion for conditions
{\bf{(A1)$_{\bm{p}}$}} and {\bf{(A2)$_{\bm{p}}$}} 
is obtained by Shigekawa 
\cite{Shi10, Shi12}. See Theorems 2.1, 4.1 and 4.2 in \cite{Shi10} and Proposition 2 in \cite{Shi12}
for details of the proof.
\begin{pr} \label{pr2}
Suppose that
there exists a constant $\lambda \geq 0$ such that
\begin{equation}
\frac{1}{p}
({\rm{div}}  b)(x) +V(x) \geq -\lambda, \quad 
x\in M,
\label{main p}
\end{equation}
then condition {\bf{(A1)$_{\bm{p}}$}} holds.
Furthermore, suppose that there exist a base point $o\in M$, 
a positive constant $C$ and a positive non-increasing continuous function
$\kappa=\kappa (r): [0,\infty) \to (0,1]$ such that 
\begin{equation}
\int_{0}^{\infty} \kappa (r) \, {\dd}r=\infty \quad \mbox{ and }\quad 
 \kappa(r(x)) br(x) \geq -C,~~ 
x\in M,
\label{Lp cond1}
\end{equation}
then condition {\bf{(A2)$_{\bm{p}}$}} holds,
where $r(x):=d(o,x)$ is the radial function from $o\in M$.
Hence, under {\rm{(\ref{main p})}} and {\rm{(\ref{Lp cond1})}}, 
for any $1<p<\infty$, 
$(-{\mathcal A}_{V},
C^{\infty}_{c}(M))$ is closable in $L^{p}({\mathfrak m})$
and 
its closure
generates a $C_{0}$-semigroup
$\{{\ee}^{-t{\mathcal A}_{V}} \}_{t \geq 0}$ in $L^{p}({\mathfrak m})$ and
${\ee}^{-t{\mathcal A}_{V}}f$ has the Feynman-Kac type functional integral representation
{\rm{(\ref{FK})}} for 
all $f\in C_{c}^{\infty}(M)$. 
Additionally, 
if we impose $V\geq 0$, the semigroup 
$\{{\ee}^{-t{\mathcal A}_{V}} \}_{t \geq 0}$
is Markovian. Namely, for all $f\in L^{p}({\mathfrak m})$, $0\leq f \leq 1$ implies
$0\leq {\ee}^{-t{\mathcal A}_{V}}f \leq 1$.
\end{pr}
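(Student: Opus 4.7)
The plan is to establish (A1)$_p$ and (A2)$_p$ separately; the remaining conclusions then follow routinely from Lumer--Phillips and It\^o calculus.

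For (A1)$_p$ I would use the standard duality characterization of $L^p$-dissipativity. For $\varphi \in C_c^\infty(M)\setminus\{0\}$ take the normalized duality element $j(\varphi) = \|\varphi\|_{L^p}^{2-p}|\varphi|^{p-1}\mathrm{sgn}(\varphi) \in L^{p'}(\mathfrak{m})$ and split $\int_M \mathcal{A}_V\varphi \cdot |\varphi|^{p-1}\mathrm{sgn}(\varphi)\,{\dd}\mathfrak{m}$ into three pieces. After regularizing $|\varphi|^{p-1}\mathrm{sgn}(\varphi)$ near $\{\varphi=0\}$ and integrating by parts, the Laplacian piece equals $(p-1)\int|\varphi|^{p-2}|\nabla\varphi|^2\,{\dd}\mathfrak{m}\geq 0$. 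The pointwise identity $(b\varphi)\cdot|\varphi|^{p-1}\mathrm{sgn}(\varphi) = \tfrac{1}{p}b(|\varphi|^p)$ together with one more integration by parts converts the drift piece into $\tfrac{1}{p}\int(\mathrm{div}\,b)|\varphi|^p\,{\dd}\mathfrak{m}$, and the potential piece is $\int V|\varphi|^p\,{\dd}\mathfrak{m}$. Hypothesis (\ref{main p}) then yields $\int_M \mathcal{A}_V\varphi\cdot j(\varphi)\,{\dd}\mathfrak{m} \geq -\lambda\|\varphi\|_{L^p}^p$, which is precisely the dissipativity of $-\lambda-\mathcal{A}_V$ on $C_c^\infty(M)$ in $L^p(\mathfrak{m})$.

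For (A2)$_p$ I would argue by duality. Fix $\lambda'>\lambda$ and suppose $u \in L^{p'}(\mathfrak{m})$ satisfies $\int_M u(\lambda'+\mathcal{A}_V)\varphi\,{\dd}\mathfrak{m} = 0$ for every $\varphi \in C_c^\infty(M)$; the goal is $u = 0$. The distributional equation $(\lambda'+\mathcal{A}_V^{\ast})u = 0$ combined with hypoellipticity of $\mathcal{A}_V^{\ast}$ upgrades $u$ to $C^\infty(M)\cap L^{p'}(\mathfrak{m})$. Hypothesis (\ref{Lp cond1}) supplies a natural Lyapunov weight: $\Phi(x):=\int_0^{r(x)}\kappa(s)\,{\dd}s$ is proper on $M$ (since $\int_0^\infty\kappa=\infty$) and satisfies $|\nabla\Phi|\leq 1$ and $b\Phi\geq -C$. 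Taking a smooth non-decreasing $\eta$ equal to $1$ on $(-\infty,1]$ and $0$ on $[2,\infty)$, set $\chi_m := \eta(\Phi/m)$. Multiplying the adjoint equation by $|u|^{p'-2}u\chi_m$, integrating, and running the same integration-by-parts bookkeeping as in Step 1 (the gradient term $(p'-1)\int\chi_m|u|^{p'-2}|\nabla u|^2\,{\dd}\mathfrak{m}$ is non-negative and can be dropped) produces the cancellation
\begin{equation*}
(\lambda'-\lambda)\int_M \chi_m|u|^{p'}\,{\dd}\mathfrak{m} \leq \tfrac{1}{p'}\int_M |u|^{p'}(\Delta+b)\chi_m\,{\dd}\mathfrak{m}.
\end{equation*}
Since $\mathrm{supp}\,(\Delta+b)\chi_m \subset \{m\leq \Phi\leq 2m\}$ and the weight bounds give $(\Delta+b)\chi_m = O(1/m)$ on that annular region, the right-hand side vanishes as $m\to\infty$ by dominated convergence (using $|u|^{p'}\in L^1(\mathfrak{m})$), while the left-hand side tends to $(\lambda'-\lambda)\|u\|_{L^{p'}}^{p'}$ by monotone convergence. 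Hence $u=0$.

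With (A1)$_p$ and (A2)$_p$ in hand, the Lumer--Phillips theorem produces the $C_0$-semigroup $\{{\ee}^{-t\mathcal{A}_V}\}_{t\geq 0}$ on $L^p(\mathfrak{m})$. The Feynman--Kac representation (\ref{FK}) for $f\in C_c^\infty(M)$ then follows by applying It\^o's formula to $N_s := \exp(-\int_0^s V(x_r)\,{\dd}r)\cdot {\ee}^{-(t-s)\mathcal{A}_V}f(x_s)$ along the diffusion generated by $\Delta+b$: this process is a local martingale up to the explosion time, and a standard localization plus dominated convergence argument yields (\ref{FK}). Markovianity when $V\geq 0$ is immediate from (\ref{FK}) because the exponential factor then takes values in $[0,1]$. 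I expect the main obstacle to be the cutoff estimate in Step~2: extracting the $O(1/m)$ decay of $(\Delta+b)\chi_m$ requires controlling $\Delta r$ on the cut locus of $r$ without any curvature hypothesis, which is handled in \cite{Shi10, Shi12} either by a Calabi-type barrier argument near the cut locus or by replacing $r$ with a smoothed approximation; the rest of the scheme is standard.
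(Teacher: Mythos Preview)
The paper itself gives no proof of this proposition; it simply cites Shigekawa \cite{Shi10,Shi12}. Your argument for {\bf{(A1)$_{\bm p}$}} is the standard duality computation and is correct (modulo the harmless slip that $\langle j(\varphi),\mathcal A_V\varphi\rangle\geq -\lambda\|\varphi\|_{L^p}^{2}$, not $-\lambda\|\varphi\|_{L^p}^{p}$, once you include the normalizing factor).

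For {\bf{(A2)$_{\bm p}$}}, however, there is a real gap. After multiplying the adjoint equation by $\chi_m|u|^{p'-2}u$ and integrating by parts, one obtains a cross term $\int |u|^{p'-2}u\,\nabla u\cdot\nabla\chi_m\,{\dd}\mathfrak m$ in addition to the positive energy term $(p'-1)\int \chi_m|u|^{p'-2}|\nabla u|^2\,{\dd}\mathfrak m$. You then integrate by parts once more to convert the cross term into $\tfrac{1}{p'}\int|u|^{p'}\Delta\chi_m\,{\dd}\mathfrak m$ and discard the energy term. The resulting quantity $(\Delta+b)\chi_m$ contains $\tfrac{1}{m}\eta'(\Phi/m)\,\kappa(r)\Delta r$, and since $\eta'\leq 0$ you would need a \emph{lower} bound on $\kappa(r)\Delta r$ to get an upper bound on $(\Delta+b)\chi_m$. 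Hypothesis (\ref{Lp cond1}) gives no information about $\Delta r$, and with no curvature assumption $\Delta r$ can be arbitrarily negative; your $O(1/m)$ claim therefore fails. This is not a cut-locus regularity issue that Calabi's trick or smoothing $r$ can repair---it is the wrong inequality.

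The remedy (and, presumably, the reason (\ref{Lp cond1}) involves only $br$) is to \emph{keep} the energy term and \emph{not} integrate the cross term into a Laplacian. Replace $\chi_m$ by $\chi_m^{2}$ as the weight; then the cross term becomes $2\int\chi_m|u|^{p'-2}u\,\nabla u\cdot\nabla\chi_m\,{\dd}\mathfrak m$, which by Young's inequality is bounded by $\varepsilon\int\chi_m^{2}|u|^{p'-2}|\nabla u|^{2}\,{\dd}\mathfrak m+\varepsilon^{-1}\int|\nabla\chi_m|^{2}|u|^{p'}\,{\dd}\mathfrak m$. The first piece is absorbed by the energy term for $\varepsilon<p'-1$, and the second is $O(m^{-2})\int_{\{m\leq\Phi\leq 2m\}}|u|^{p'}\,{\dd}\mathfrak m\to 0$ since $|\nabla\Phi|\leq 1$. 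The drift contribution $b(\chi_m^{2})=\tfrac{2}{m}\chi_m\eta'(\Phi/m)\,b\Phi$ is bounded above by $C/m$ using only $b\Phi=\kappa(r)\,br\geq -C$ from (\ref{Lp cond1}). No control of $\Delta r$ is needed. (Also: your $\eta$ should be non-\emph{increasing}.)
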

\begin{re} If we can take $\lambda=0$ in {\rm{(\ref{main p})}}, the semigroup
$\{{\ee}^{-t{\mathcal A}_{V}} \}_{t \geq 0}$ is contractive in 
$L^{p}({\mathfrak m})$. (Note that it is also shown by {\rm{\cite[Theorem 3]{See84}}}.) 
If we assume $V\geq 0$,
the lower-boundedness of ${\rm div}b$ implies {\rm{(\ref{main p})}}.
However, needless to say, 
$V\geq 0$ and 
{\rm{(\ref{main p})}} does not imply
the lower-boundedness of ${\rm div}b$ 
in general. 
\end{re}
\begin{exm}
The functions 
$$\kappa(r)=1,~ \frac{1}{r}, ~\frac{1}{r\log r}, ~\frac{1}{r \log r \log \log r}, \ldots, \quad r\geq R$$
are typical examples satisfying 
$\int^{\infty}_{R} \kappa(r)\, {\dd}r=\infty$ 
with $R=0, 1, e, e^{e}, \ldots$, 
respectively.  
\end{exm}

For the later purpose, we 
fix a constant $\gamma>1$ and 
take a
smooth function $\phi=\phi_{\gamma}(t): {\mathbb R} \to [0,1]$ 
satisfying $\phi \equiv 1$ on $(-\infty, 1]$,  $\phi \equiv 0$ on $[\gamma, \infty)$ and 
\begin{equation}
-\frac{2}{\gamma-1}\leq \phi'(t) \leq 0, \quad \vert \phi''(t) \vert \leq \frac{8}{(\gamma-1)^{2}} \quad \mbox{for all }
1<t<\gamma.
\label{cutoff-bibun}
\end{equation}
In the following two subsections, we consider the cases where
\begin{itemize}
\item{} $M$ has an empty cut-locus;
\vspace{-2mm}
\item{} $M$ has a variable lower Ricci curvature bound,
\end{itemize}
and give more concrete sufficient conditions for {\bf{(A)}} in terms of the vector field $b$.
\subsection{Manifolds with an empty
cut-locus}
\begin{pr}\label{without-cutlocus}
Let $M$ be a non-compact manifold with an empty cut-locus.
Suppose that there exist some base point $o\in M$ 
and a positive non-increasing smooth function
$\kappa=\kappa (r): (0,\infty) \to (0,1]$ 
such that
\begin{equation}
\sup_{r>0}
\vert \kappa' (r) \vert <\infty, \quad
\int_{0}^{\infty} \kappa (r) 
\, {\dd}r=\infty
\label{kappa-condition}
\end{equation}
and 
the radial function $r(x)=d(o, x)$ satisfies
\begin{equation} \kappa(r(x))\big(\Delta r(x) +br(x)\big) \geq -C 
\int_{0}^{r(x)}
\kappa (r)
\, {\dd}r, 
\quad x\in M \setminus \{ o \}
\label{main infty-827}
\end{equation}
for some positive constant $C$.
Then condition {\bf{(A)}} holds.
\end{pr}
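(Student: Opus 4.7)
The plan is to apply Proposition \ref{kihon-pr1} by explicitly constructing a suitable sequence of cut-off functions built out of the radial function. Since $M$ has empty cut-locus, the radial function $r(x)=d(o,x)$ is smooth on $M\setminus\{o\}$ and satisfies $|\nabla r|\equiv 1$. Define the primitive
\begin{equation*}
\Psi(r):=\int_0^r \kappa(s)\,\dd s,\qquad r\geq 0,
\end{equation*}
which is smooth, strictly increasing, and tends to $\infty$ by (\ref{kappa-condition}). With $\phi=\phi_\gamma$ the cut-off from (\ref{cutoff-bibun}), set
\begin{equation*}
\chi_m(x):=\phi\bigl(\Psi(r(x))/m\bigr),\qquad m\in\mathbb{N}.
\end{equation*}

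Conditions \textbf{(a1)} and \textbf{(a2)} are then immediate: $0\leq\chi_m\leq 1$ by construction; for any compact $K\subset M$, choosing $m_0(K)\geq \Psi(\max_{x\in K}r(x))$ forces $\Psi(r(x))/m\leq 1$ on $K$ for all $m\geq m_0(K)$, so $\chi_m\equiv 1$ on $K$. Moreover $\supp\chi_m\subset\{x:\Psi(r(x))\leq\gamma m\}$ is compact since $\Psi(r)\to\infty$. Smoothness of $\chi_m$ follows because on some neighborhood of $o$ we have $\Psi(r)/m<1$ (so $\chi_m\equiv 1$ there), while away from $o$ the radial function is smooth.

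The heart of the argument is \textbf{(a3)}. Using $|\nabla r|=1$ and the chain rule,
\begin{equation*}
(\Delta+b)\chi_m=\phi''\!\Bigl(\tfrac{\Psi(r)}{m}\Bigr)\frac{\kappa(r)^2}{m^2}+\phi'\!\Bigl(\tfrac{\Psi(r)}{m}\Bigr)\frac{\kappa'(r)}{m}+\phi'\!\Bigl(\tfrac{\Psi(r)}{m}\Bigr)\frac{\kappa(r)}{m}\bigl(\Delta r+br\bigr).
\end{equation*}
The first term is bounded by $8/((\gamma-1)^2 m^2)$ using (\ref{cutoff-bibun}) and $\kappa\leq 1$; the second is bounded by $2\sup_{r>0}|\kappa'(r)|/((\gamma-1)m)$ using (\ref{kappa-condition}). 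For the third term, note that $\phi'(\Psi(r)/m)$ vanishes unless $1\leq\Psi(r)/m\leq\gamma$, i.e.\ $\Psi(r)\leq\gamma m$. On that set the hypothesis (\ref{main infty-827}) yields $\kappa(r)(\Delta r+br)\geq -C\Psi(r)\geq -C\gamma m$. Since $\phi'\leq 0$, multiplying flips the inequality and gives
\begin{equation*}
\phi'\!\Bigl(\tfrac{\Psi(r)}{m}\Bigr)\frac{\kappa(r)}{m}(\Delta r+br)\leq -C\gamma\,\phi'\!\Bigl(\tfrac{\Psi(r)}{m}\Bigr)\leq \frac{2C\gamma}{\gamma-1}.
\end{equation*}
All three contributions are therefore bounded above by a constant independent of $m$ (and of $x$), establishing \textbf{(a3)}. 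Proposition \ref{kihon-pr1} then yields condition \textbf{(A)}.

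The only delicate point is the sign bookkeeping in the last term: the proof crucially uses that $\phi'\leq 0$ together with the fact that the lower bound on $\Delta r+br$ in (\ref{main infty-827}) degrades at rate $\Psi(r)$, which is exactly the scale against which the cut-off is normalized. The roles of $\kappa$ and $\Psi$ are thus tightly coupled: $\Psi$ is unbounded so the functions exhaust $M$, while $\kappa$ tames the possibly unbounded drift through (\ref{main infty-827}). The remaining verifications---smoothness at $o$, compactness of supports, and the routine bounds on the first two terms---are straightforward once this scaling is chosen correctly.
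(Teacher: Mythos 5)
Your proof is correct and follows essentially the same route as the paper: the same cut-off functions $\chi_m=\phi\bigl(h(r(x))/m\bigr)$ with $h(r)=\int_0^r\kappa(s)\,\dd s$, the same three-term expansion of $(\Delta+b)\chi_m$, and the same use of (\ref{main infty-827}) together with the bound $h(r)\leq\gamma m$ on the support of $\phi'$ (the paper merely fixes $\gamma=4$). Your sign bookkeeping in the drift term is, if anything, slightly more careful than the paper's intermediate inequality.
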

\begin{proof}
We first put $\gamma=4$, for instance, such that the function $\phi$ satisfies
\begin{equation}
-1\leq \phi'(u) \leq 0, \quad \vert \phi''(u)  \vert \leq 1, \quad u \in \mathbb R.
\label{phi-1-estimate}
\end{equation}
We define a sequence of smooth cut-off functions
$\{ \chi_{m} \}_{m=1}^{\infty}$ by
$$\chi_{m}(x):=\phi\Big(\frac{h(r(x))}{m} \Big), \quad x\in M,~m\in \mathbb N,$$
where
$$ h(r)=\int_{0}^{r} \kappa (s) {\dd}s, \quad r \geq 0.$$
We should remark here that (\ref{kappa-condition}) implies $h(r) \nearrow \infty$ as $r\to \infty$.
Since we easily see
$$
\chi_{m}(x)
=
\begin{cases} {\displaystyle{1}} & \text{~if $\displaystyle{r(x)\leq h^{-1}(m)}$},
\vspace{1mm} \\
{\displaystyle{0}} & \text{~if $\displaystyle{r(x)\geq h^{-1}(4m)}$},
\end{cases}
$$
it is sufficient to check (\ref{keyestimate-825})
for all $h^{-1}(m) \leq r(x)\leq  h^{-1}(4m)$. 
Combining a direct calculation with
(\ref{phi-1-estimate}), $0<\kappa \leq 1$, the basic Lipschitz estimate $\vert \nabla r(x) \vert_{T_{x}M} \leq 1$
and condition (\ref{main infty-827}), we obtain
\begin{align*}
\Delta 
&
\chi_{m}(x)+b\chi_{m}(x)
\nonumber \\
 &= \frac{\kappa(r(x))^{2}}{m^{2}}
\phi'' \Big(\frac{h(r(x))}{m}\Big) \vert \nabla {r}(x)
\vert^{2}_{T_{x}M}
+
\frac{\kappa'(r(x))}{m}
\phi' \Big(\frac{h(r(x))}{m}\Big) \vert \nabla {r}(x)
\vert^{2}_{T_{x}M}
\nonumber \\
&\mbox{ }~~+
\frac{\kappa(r(x))}{m}
\phi' \Big(\frac{h(r(x))}{m}\Big) \Delta r(x)
+
\frac{\kappa(r(x))}{m}
\phi' \Big(\frac{h(r(x))}{m}\Big) b r(x)
\nonumber \\
&\leq
\frac{1}{m^{2}}+
\Big( \sup_{r \geq R} \vert \kappa' (r) \vert \Big)-\frac{1}{m}
\big \{ \kappa(r(x)) \cdot \big(\Delta r(x) +br(x)\big) \big \}
\nonumber \\
&\leq C \Big(1+ \frac{h(r(x))}{m} \Big) \leq 5C,
\end{align*}
where we used $h(r(x)) \leq 4m$ for the final inequality.
Thus we have shown {\bf{(a3)}}, and Proposition \ref{kihon-pr1} leads us 
to the desired condition {\bf{(A)}}. This completes the proof.
\end{proof}
As an example of manifolds without a cut-locus, we consider
the case where $M$ has a pole $o\in M$, that is, 
the exponential map $\exp: T_o M \rightarrow M$ is a diffeomorphism.
In this case, we have the following criterion for condition {\bf{(A)}}.
\begin{co} Assume that $M$ has a pole $o\in M$ and there exists a positive constant $C$ such that
\begin{equation}
\mathrm{Ric}(x) \geq -C(1+r(x)^2), \quad x\in M,
\label{Ric-825}
\end{equation}
where $r(x):=d(o,x)$ is the radial function from the pole $o\in M$.
Furthermore assume that there exists a positive constant $C$
such that
 \begin{equation}
 \kappa(r(x)) br(x) \geq -C
 \int_{0}^{r(x)} \kappa (r){\dd}r, \quad x\in M \setminus \{ o \},
 \label{check-827}
 \end{equation}
where the function $\kappa$ is introduced in Proposition
{\rm{\ref{without-cutlocus}}}.
Then condition {\bf{(A)}} holds.
\end{co}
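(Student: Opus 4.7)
The plan is to derive the corollary from Proposition \ref{without-cutlocus} applied with the given $\kappa$ and with the pole $o$ as base point. Since $M$ has a pole at $o$, the exponential map $\exp_o\colon T_o M \to M$ is a diffeomorphism, the cut-locus of $o$ is empty, and $r(x)=d(o,x)$ is smooth on $M\setminus\{o\}$. The condition \eqref{kappa-condition} on $\kappa$ is given, and the $br$-estimate \eqref{check-827} is exactly half of the hypothesis \eqref{main infty-827}. Hence the problem reduces to showing
\begin{equation*}
\kappa(r(x))\,\Delta r(x) \;\geq\; -C'\int_{0}^{r(x)}\kappa(s)\,{\dd}s,\qquad x\in M\setminus\{o\},
\end{equation*}
for some constant $C'>0$.

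To this end I would establish the two-sided estimate $|\Delta r(x)|\leq C_1(1+r(x))$ on $M\setminus\{o\}$. The upper bound $\Delta r\leq C_1(1+r)$ is the classical Laplacian comparison theorem under the Ricci lower bound: writing $\mathrm{Ric}\geq -(n-1)K(r)$ with $K(r)=C(1+r^2)/(n-1)$, one has $\Delta r(x)\leq (n-1)f'(r(x))/f(r(x))$, where $f$ solves the Jacobi equation $f''=Kf$ with $f(0)=0$, $f'(0)=1$; a WKB-type calculation for quadratically growing $K$ gives $f'(r)/f(r)\lesssim 1+r$. For the matching lower bound, set $\phi(r):=-\Delta r$ along a unit-speed radial geodesic from $o$; the Bochner--Riccati inequality $(\Delta r)'\leq -(\Delta r)^2/(n-1)-\mathrm{Ric}(\partial_r,\partial_r)$ translates into $\phi'\geq \phi^2/(n-1)-C(1+r^2)$. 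Whenever $\phi(r_0)$ exceeds $2\sqrt{(n-1)C}\,(1+r_0)$ at some $r_0$, comparison with the blow-up ODE $y'=y^2/(2(n-1))$ forces $\phi$ to become infinite in finite additional distance; but the pole condition implies that no Jacobi field along a radial geodesic from $o$ vanishes on $(0,\infty)$, so $\Delta r$---and hence $\phi$---is smooth and finite throughout $M\setminus\{o\}$, yielding $\phi(r)\lesssim 1+r$.

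Because $\kappa$ is non-increasing, $\int_{0}^{r}\kappa(s)\,{\dd}s\geq r\,\kappa(r)$, so the two-sided bound gives $\kappa(r)|\Delta r(x)|\lesssim (1+r)\kappa(r)\lesssim \bigl(1+\tfrac{1}{r}\bigr)\int_{0}^{r}\kappa(s)\,{\dd}s$ for $r=r(x)$ bounded away from zero. On $\{r(x)<r_0\}$ for a small enough $r_0>0$, the pole expansion $\Delta r(x)\sim (n-1)/r(x)$ near $o$ ensures $\Delta r>0$, so the reduction displayed above is trivial; on $\{r(x)\geq r_0\}$ the preceding estimate closes it. Adding \eqref{check-827} then verifies \eqref{main infty-827}, and Proposition \ref{without-cutlocus} delivers condition {\bf{(A)}}. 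The main technical obstacle will be the lower bound on $\Delta r$: the Ricci lower bound alone gives no information about how negative $\Delta r$ can be, and the pole condition has to enter essentially through the blow-up argument above.
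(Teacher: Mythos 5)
Your proof is correct and follows essentially the same route as the paper: reduce \eqref{main infty-827} to a lower bound $\Delta r(x)\geq -C(1+r(x))$, convert $\kappa(r)r$ into $\int_0^r\kappa$ using that $\kappa$ is non-increasing, and invoke Proposition \ref{without-cutlocus}. The only difference is that the paper simply cites the reverse Laplacian comparison theorem from the appendix of Seeley \cite{See84}, whereas you reprove it via the Riccati blow-up argument (which is in substance the same proof), so your write-up is self-contained but not a different method.
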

\begin{proof}
By \cite[Appendix]{See84},
the {\it{reverse}} Laplacian comparison theorem
\begin{equation}
\Delta r(x) \geq -C r(x), \quad x\in M\setminus \{o\}
\label{reverse-Laplacian}
\end{equation}
holds for some positive constant $C$. Since the function $\kappa: (0,\infty) \to (0,1]$ 
is non-decreasing and smooth, it follows from (\ref{reverse-Laplacian}) that
$$ \kappa(r(x))\Delta r(x) \geq -C\kappa(r(x)) r(x) \geq -C \int_{0}^{r(x)} \kappa(r) {\dd}r.$$
Combining this estimate with (\ref{check-827}), we have
(\ref{main infty-827}). Thus we may apply
Proposition \ref{without-cutlocus} to complete the proof.
\end{proof}
\begin{exm} If $\kappa \equiv 1$, {\rm{(\ref{check-827})}} is equivalent to 
$$br(x) \geq -Cr(x), \quad x\in M\setminus \{ o\}.$$
If there exists a sufficiently large $R>0$ such that
$\kappa(r)=\frac{1}{r}$ and $\kappa(r)=\frac{1}{r\log r}$ for all $r>R$, 
we may read {\rm{(\ref{check-827})}} as 
$$ br(x) \geq -Cr(x) \big(1+\log r(x) \big) \quad \mbox{and }~
br(x) \geq -Cr(x) \log r(x) \big(1+\log \log r(x) \big), 
$$
respectively.
\end{exm}
\subsection{Manifolds with variable lower Ricci curvature bounds}
We consider the case where the Ricci curvature is bounded from below
by a (possibly unbounded) nonpositive function of the radial function
$r(x):=d(o,x)$ from some base point $o\in M$. 
Throughout this subsection, we assume (\ref{Ric-825}).
Thanks to \cite[Theorem 2.1]{BS18}, there exist a smooth exhaustion 
function ${\mathfrak r}: M \to [0, \infty)$ and constants $0<D_{1}<D_{2}$ 
and $D_{3}>0$ such that
\begin{align}
& D_{1} r(x)^{2}\leq {\mathfrak r}(x) \leq D_{2} \max\{1, r(x)^{2}\}, \quad x\in M,
\label{r-tau}
\\
& \vert \nabla {\mathfrak r}(x) \vert 
\leq D_{3} r(x), \quad x\in M \setminus {\overline{B_{1}(o)}},
\label{r-tau2}
\\
&\vert \Delta {\mathfrak r}(x) \vert
\leq D_{3} r(x)^{2}, \quad x\in M \setminus {\overline{B_{1}(o)}}.
\label{r-tau3}
\end{align}
\begin{co}\label{pr1}
Assume {\rm{(\ref{Ric-825})}} for some base point $o\in M$.
Furthermore assume that there exist
two constants $C>0$ 
and $R\geq 1$ 
such that
\begin{equation} b{\mathfrak r}(x)
\geq -Cr(x)^{2} , 
\quad x\in M \setminus {\overline{B_{R}(o)}}.
\label{main infty}
\end{equation}
Then condition {\bf{(A)}} holds. In particular, if 
the vector field $b$ satisfies
\begin{equation}
\vert b(x) \vert_{T_{x}M} \leq C \big (1+r(x) \big), \quad x\in M
\label{linear-drift}
\end{equation}
for some $C>0$, 
{\rm{(\ref{main infty})}} and hence
condition {\bf{(A)}} hold.
\end{co}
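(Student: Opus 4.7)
The plan is to apply Proposition \ref{kihon-pr1} with a sequence of cut-off functions built from the smooth exhaustion function $\mathfrak{r}$ provided by \cite{BS18}. Concretely, fix $\gamma>1$ (say $\gamma=2$) and the associated $\phi=\phi_\gamma$ satisfying (\ref{cutoff-bibun}), and set
\begin{equation*}
\chi_m(x):=\phi\!\left(\frac{\mathfrak r(x)}{m}\right),\qquad x\in M,\ m\in\mathbb N.
\end{equation*}
Conditions \textbf{(a1)} and \textbf{(a2)} are immediate: since $\mathfrak r$ is an exhaustion function by (\ref{r-tau}), $\chi_m\equiv 1$ on $\{\mathfrak r\le m\}$ and $\chi_m\equiv 0$ outside $\{\mathfrak r\le \gamma m\}$, so on every compact $K\subset M$ one has $\chi_m=1$ once $m$ exceeds $\sup_{K}\mathfrak r$.

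The heart of the argument is \textbf{(a3)}. A direct computation gives
\begin{equation*}
(\Delta+b)\chi_m(x)=\frac{1}{m^2}\phi''\!\Big(\tfrac{\mathfrak r(x)}{m}\Big)\,|\nabla\mathfrak r(x)|^2_{T_xM}+\frac{1}{m}\phi'\!\Big(\tfrac{\mathfrak r(x)}{m}\Big)\big(\Delta\mathfrak r(x)+b\mathfrak r(x)\big),
\end{equation*}
and this vanishes outside $\{m\le\mathfrak r\le\gamma m\}$, so it suffices to bound the right-hand side on that annulus. On the set $\{\mathfrak r\le\gamma m\}$, the lower bound in (\ref{r-tau}) yields $r(x)^2\le \gamma m/D_1$, which is the crucial fact that absorbs the $1/m$ factors. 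Using (\ref{r-tau2}), $|\nabla\mathfrak r|^2\le D_3^2 r^2\le D_3^2\gamma m/D_1$, so the second-order term is bounded by a constant uniformly in $m$. For the first-order term, combining the reverse Laplacian estimate (\ref{r-tau3}) with our hypothesis (\ref{main infty}) gives $\Delta\mathfrak r+b\mathfrak r\ge -(D_3+C)r^2$ on $M\setminus\overline{B_R(o)}$; since $\phi'\le 0$ by (\ref{cutoff-bibun}), this is precisely the correct sign to conclude
\begin{equation*}
\frac{1}{m}\phi'\!\Big(\tfrac{\mathfrak r(x)}{m}\Big)\big(\Delta\mathfrak r(x)+b\mathfrak r(x)\big)\le \frac{2}{\gamma-1}(D_3+C)\cdot\frac{r(x)^2}{m}\le \frac{2\gamma(D_3+C)}{(\gamma-1)D_1}.
\end{equation*}
For points with $r(x)\le R$ lying in the annulus, $\Delta\mathfrak r$ and $b\mathfrak r$ are bounded by smoothness and compactness, so the same estimate holds after enlarging the constant. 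Altogether (\ref{keyestimate-825}) is verified, and Proposition \ref{kihon-pr1} delivers condition \textbf{(A)}.

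For the ``in particular'' assertion, observe that (\ref{r-tau2}) and the Cauchy--Schwarz inequality give $|b\mathfrak r(x)|\le |b(x)|_{T_xM}\,|\nabla\mathfrak r(x)|\le D_3 r(x)\,|b(x)|_{T_xM}$ for $x\notin\overline{B_1(o)}$; inserting (\ref{linear-drift}) yields $|b\mathfrak r(x)|\le C D_3 r(x)(1+r(x))\le C'r(x)^2$ for $r(x)\ge 1$, hence (\ref{main infty}) with $R=1$. The main technical obstacle is precisely the verification above: one has to match the quadratic-in-$r$ growth allowed in the hypotheses with the $1/m$ damping coming from differentiating $\chi_m$, and this balance works only because the exhaustion function $\mathfrak r$ grows like $r^2$, which is exactly what (\ref{r-tau}) and (\ref{r-tau2})--(\ref{r-tau3}) guarantee.
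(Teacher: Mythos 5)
Your proposal is correct and follows essentially the same route as the paper: build cut-offs $\chi_m=\phi(\mathfrak r/\cdot)$ from the Bianchi--Setti exhaustion function, verify \textbf{(a1)}--\textbf{(a3)} using (\ref{r-tau})--(\ref{r-tau3}) together with (\ref{main infty}) so that the quadratic growth of $|\nabla\mathfrak r|^2$, $\Delta\mathfrak r$ and $b\mathfrak r$ is absorbed by the damping from differentiating $\phi$, and then invoke Proposition \ref{kihon-pr1}; the second claim is obtained exactly as in the paper via Cauchy--Schwarz and (\ref{r-tau2}). The only (inessential) difference is the normalization $\phi(\mathfrak r(x)/m)$ in place of the paper's $\phi(\mathfrak r(x)/(D_1m^2))$, a reparametrization of the index $m$ that does not change the argument.
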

\begin{proof}
We put $\gamma=D_{2}/D_{1}$ and define a sequence of smooth cut-off functions
$\{ \chi_{m} \}_{m=1}^{\infty}$ by
$$\chi_{m}(x):=\phi\Big(\frac{{\mathfrak r}(x)}{D_{1}m^{2}} \Big), \quad x\in M,~m\in \mathbb N.$$
Recalling (\ref{r-tau}) and noting $\gamma>1$, we easily see
$$
\chi_{m}(x)
=
\begin{cases} {\displaystyle{1}} & \text{~if $\displaystyle{r(x)\leq \gamma^{-1/2}m}$},
\vspace{1mm} \\
{\displaystyle{0}} & \text{~if $\displaystyle{r(x)\geq \gamma^{1/2}m}$},
\end{cases}
$$
and thus $\Delta \chi_{m}(x)+b\chi_{m}(x)=0$ holds for $r(x)< \gamma^{-1/2}m$ and 
$r(x)>\gamma^{1/2}m$.

Hence it is sufficient to check condition {\bf{(a3)}} for  
$\gamma^{-1/2}m \leq r(x)\leq  \gamma^{1/2}m$.
Combining a direct calculation with (\ref{cutoff-bibun}), (\ref{r-tau2}), (\ref{r-tau3})
and (\ref{main infty}), we have
\begin{align}
\Delta \chi_{m}(x)+b\chi_{m}(x) &= \frac{1}{D_{1}^{2}m^{4}}
\phi'' \Big(\frac{{\mathfrak r}(x)}{D_{1}m^{2}} \Big) \vert \nabla {\mathfrak r}(x)
\vert^{2}_{T_{x}M}
+\frac{1}{D_{1}m^{2}} \phi' \Big(\frac{{\mathfrak r}(x)}{D_{1}m^{2}} \Big)\Delta {\mathfrak r}(x)
\nonumber \\
&\mbox{ }~~+
\frac{1}{D_{1}m^{2}}
\phi' \Big(\frac{{\mathfrak r}(x)}{D_{1}m^{2}} \Big) b{\mathfrak r}(x)
\nonumber \\
&\leq \frac{8D_{3}^{2}}{(D_{2}-D_{1})^{2}m^{4}}r(x)^{2}
+\frac{2D_{3}}{(D_{2}-D_{1})m^{2}}r(x)^{2}
\nonumber \\
&\mbox{ }~~+
\frac{2C}{(D_{2}-D_{1})m^{2}}r(x)^{2}.
\label{check-825}
\end{align}
Since $r(x)^{2} \leq  \gamma m^{2}$, the right-hand side (\ref{check-825})
is bounded from above by a positive constant $C$ independent of $m$.
It means that we have shown {\bf{(a3)}}, and thus Proposition \ref{kihon-pr1} leads us 
to the desired condition {\bf{(A)}}.

Besides, it follows from 
(\ref{r-tau2}) and (\ref{linear-drift}) that
\begin{align*}
b{\mathfrak r}(x) &\geq -\vert b(x) \vert_{T_{x}M} \vert \nabla {\mathfrak r}(x) \vert_{T_{x}M}
\nonumber \\
& \geq -C(1+r(x)) D_{3} r(x)
\nonumber \\
& \geq -CD_{3}(r(x)+r(x)) r(x)=-Cr(x)^{2}, \quad x\in M \setminus {\overline{B_{1}(o)}}.
\end{align*}
Thus we have shown (\ref{main infty}) with $R=1$. This completes the proof.
\end{proof}
%
\section{Examples}
In this section we study examples of a sequence of partitions and 
drift vector fields to satisfy conditions 
{\bf{(A)}}, 
{\bf{(A1)$_{\bm{p}}$}},
{\bf{(A2)$_{\bm{p}}$}},
{\bf{(B)}} and {\bf{(C)}} on two typical manifolds: Euclidean spaces and model manifolds. 
\subsection{Euclidean spaces}
We consider the case $M=\mathbb{R}^n$. 
Let $x=(x^{(1)}, \ldots ,x^{(n)})$ be the standard Euclidean coordinates 
and we write 
$$
r(x)=\Big \{ \sum_{i=1}^{n} (x^{(i)})^{2} \Big \}^{1/2}, \quad
b(x)=\sum_{i=1}^{n}b^i(x) \frac{\partial}{\partial x_i}.$$
We take a sequence of partitions $\mathbb{X}_k=\{ X_{\mathbf{i}}^k \}_{{\mathbf{i}} \in \mathbb{Z}^d}$, $k\in \mathbb N$, 
by
\begin{equation*}
X_{(i_1, \ldots , i_n)}^k= \left[ \frac{i_1}{k}, \frac{i_1+1}{k} \right) \times \cdots\times  \left[ \frac{i_n}{k} , \frac{i_n+1}{k} \right),
\quad {\mathbf{i}}=(i_{1}, \ldots, i_{n}).
\end{equation*}
Since $| \mathbb{X}_k |=\frac{\sqrt{n}}{k}$, it is easy to check that each
$ \mathbb{X}_k $ satisies conditions {\bf{(B)}} and {\bf{(C)}}.

We first find a sufficient condition on the vector filed $b$
for conditions 
{\bf{(A1)$_{\bm{p}}$}}
and
{\bf{(A2)$_{\bm{p}}$}}
by applying 
Proposition \ref{pr2} with $\kappa \equiv 1$.
In this case, conditions (\ref{main p}) and 
(\ref{Lp cond1})
are rewritten by 
\begin{equation}
\sum_{i=1}^n \frac{\partial b^i}{\partial x^{(i)}}(x)  \geq -\gamma, \quad x\in \mathbb R^{n}
\label{b-108-2}
\end{equation}
and 
\begin{equation}
\sum_{i=1}^n b^i(x) x^{(i)} \geq -C r(x), \quad x\in \mathbb R^{n}\setminus \{0 \},
\label{b-108-1}
\end{equation}
respectively. For example, we consider a vector field $b$ given by 
\begin{equation}
b^i(x)=c^i_0(x) +c^i_1(x) x^{(i)}+c^i_2(x) (x^{(i)})^3 +\cdots +c^i_{k_i}(x)(x^{(i)})^{2k_i-1},\quad k_{i}\in \mathbb N,~
i=1,\ldots, n,
\label{b-108}
\end{equation}
where $c^i_j=c^{i}_{j}(x)$, $i=1,\ldots,n$, $j=1,\ldots, k_{i}$, are smooth functions on $\mathbb R^{n}$.
Note that we did not use the Einstein summation convention in (\ref{b-108}).
Here we further assume that for each $i=1,\ldots, n$, the functions $c^{i}_{j}(x)$, $j=1,\ldots, k_{i}$, 
are independent of $x^{(i)}$. Then (\ref{b-108-2}) and (\ref{b-108-1}) hold provided that
$$c^i_1(x), \ldots , c^i_{k_i}(x) \geq 0, \quad i=1,\ldots, n,~x\in \mathbb R^{n}$$
and 
$$\big \vert  (c^1_0(x), \ldots, c^n_0(x)) \big \vert_{\mathbb R^{n}} \leq C^\prime, 
\quad 
x\in \mathbb R^{n}$$
with some positive constant $C^\prime$, respectively.

Next, we find condition {\bf{(A)}} by applying Proposition \ref{without-cutlocus}.
In this case,  the condition (\ref{main infty-827}) with $\kappa \equiv1$ is rewritten by 
\begin{equation}
\sum_{i=1}^n b^i(x) x^{(i)} \geq  -Cr(x)^{2} -(n-1), \quad x\in {\mathbb R}^{n}.
\label{R^n(A)}
\end{equation}
For example, we also consider the vector field $b$ given by (\ref{b-108}).
By a direct calculation, we easily see that (\ref{b-108}) satisfies
(\ref{R^n(A)}) provided that
$$\big \vert (c_0^1(x), \ldots, c_0^n(x))\big \vert_{\mathbb R^{n}} \leq C^{\prime}r(x),
\quad x\in \mathbb R^{n}$$
and 
$$
c^i_1(x) \geq -C^{\prime},
\quad
c^i_2(x), \ldots c^i_{k_i}(x) \geq 0,
\quad i=1,\ldots, n,~~x\in \mathbb R^{n}$$ with some 
positive constant $C^\prime$.

Since $\mathbb{R}^n$ is regarded as a model manifold, other examples of the vector field $b$ satisfying 
{\bf{(A)}}, 
{\bf{(A1)$_{\bm{p}}$}}
and
{\bf{(A2)$_{\bm{p}}$}}
can be found in Section \ref{model (A)} below.

\subsection{Model manifolds}
\subsubsection{Basic facts for model manifolds}
For $r_0 \in (0, +\infty]$ and a smooth positive function $\psi =\psi(r)$ on $(0, r_0)$, 
let $(M, g_{\psi})$ be an $n$-dimensional {\it{model manifold}} with weight $\psi$, that is, 
there exists one chart on $M$ that covers all of $M$ and the image of this chart in $\mathbb{R}^n$ is 
$B^{\mathbb{R}^n}(r_0)=\{ x \in \mathbb{R}^n ~;~ |x|_{\mathbb{R}^n} <r_0 \}$. 
The metric $g_{\psi} $ in the polar coordinates $(r, \theta)=(r, \theta^1 , \ldots , \theta^{n-1})$ in the above chart 
has the form
\begin{equation*}
g_{\psi} (r,\theta)={\dd}r^2 +\psi(r)^2g_{\mathbb{S}^{n-1}}(\theta),
\end{equation*}
where $g_{\mathbb{S}^{n-1}}$ is the standard Riemannian metric 
on the $(n-1)$-dimensional unit sphere $\mathbb{S}^{n-1}$ which has the form 
\begin{equation*}
g_{\mathbb{S}^{n-1}}(\theta) =\gamma_{ij}(\theta) {\dd}\theta^i {\dd}\theta^j.
\end{equation*}
To avoid the singularity at the origin $o$, we always assume 
$$\lim_{r \searrow 0} \psi(r)=0, \quad
\lim_{r \searrow 0} \psi^\prime(r)=1, \quad \lim_{r \searrow 0} \psi^{\prime \prime}(r)=0.$$
Moreover, when $r_0<\infty$, we assume 
$$\lim_{r \nearrow r_0} \psi(r)=0, \quad
\lim_{r \nearrow r_0} \psi^\prime(r)=-1, \quad \lim_{r \nearrow r_0} \psi^{\prime \prime}(r)=0$$ 
to ensure the completeness and non-singularity
 of the manifold.
 Some of typical manifolds can be regarded as a model manifold. Indeed, 
 \begin{itemize}
\item{} ${\mathbb R}^{n}$ with $r_0=\infty$ and $\psi(r)=r$;
\vspace{-2mm}
\item{} ${\mathbb S}^{n}$ (without a pole) with $r_0=\pi$
and $\psi(r)=\sin r$, $0< r < \pi$;
\vspace{-2mm}
\item{} $\mathbb{H}^n$ with $r_0=\infty$ and $\psi(r)=\sinh r$ 
\end{itemize}
are model manifolds.
We refer to \cite[Section 3.10]{Gri09} for the precise definition and basic results of model manifolds.

On a model manifold $M$, the Riemannian volume measure $\mathfrak{m}$ is 
given in the polar coordinates by 
\begin{equation*}
{\dd} \mathfrak{m}=\psi(r)^{n-1} {\dd}r {\dd} \theta,
\end{equation*}
where ${\dd}\theta$ stands for the Riemannian volume measure on $\mathbb{S}^{n-1}$. 
The Laplacian has the form
\begin{equation*}
\Delta =\frac{\partial^2}{\partial r^2} + 
(n-1)\frac{\psi'(r)}{\psi(r)}
 \frac{\partial}{\partial r} 
+\frac{1}{\psi(r)^2} \Delta_{\mathbb{S}^{n-1}}.
\end{equation*}
In particular, 
\begin{equation*}
\Delta r=(n-1) \frac{\psi^\prime(r)}{\psi (r)}.
\end{equation*}

\subsubsection{Ricci curvature on model manifolds}
When $n$=2, we can compute the Ricci curvature on a model manifold by using the weight function $\psi$ explicitly.
Indeed, the Christoffel symbol $\Gamma_{ij}^k$ can be given by
\begin{equation*}
\Gamma_{ij}^r
= \left\{ \begin{array}{cl}
-\psi(r) \psi^{\prime} (r) & \mbox{  if } (i,j)=(\theta , \theta) \vspace{2mm}\\
0   & \mbox{  otherwise}
\end{array}
\right. , ~
\Gamma_{ij}^{\theta}
= \left\{ \begin{array}{cl}
 \frac{\psi ^\prime (r)}{\psi (r) }& \mbox{  if } (i,j)=(r , \theta)  \mbox{ or } (\theta, r) \vspace{2mm} \\
0   & \mbox{  otherwise}
\end{array}
\right. .
\end{equation*}
Since the component of Ricci curvature tensor $R_{ij}$ is given by
\begin{equation*}
R_{ij}= \partial_k \Gamma_{ij}^k -\partial_j \Gamma_{ik}^k 
+\Gamma_{ij}^k \Gamma_{km}^m -\Gamma_{im}^k \Gamma_{jk}^m,
\end{equation*}
we obtain
\begin{equation*}
(R_{ij})=\left( 
\begin{array}{cc}
-\frac{\psi^{\prime \prime} (r)}{\psi (r)}  & 0 \\
0 &  -\psi (r) \psi^{\prime \prime} (r) 
\end{array}
\right).
\end{equation*}
Hence, we obtain
\begin{equation}
\mathrm{Ric} \geq -
\frac{\psi^{\prime \prime}(r)}{\psi(r)}.
\label{Ric lower}
\end{equation}

When $\psi$ is polynomial, that is, for some positive constant $C_1$ and for some $\alpha \in \mathbb{R}$
\begin{equation*}
\psi(r)=C_1 r^{\alpha}
\end{equation*}
for all large $r>0$, (\ref{Ric lower}) implies that
\begin{equation*}
\mathrm{Ric} \geq  -\alpha (\alpha-1)r^{-2}
\end{equation*}
for all large $r>0$. 

When $\psi$ is exponential, that is, for some positive constant $C_1$ and some $\alpha, \beta \in \mathbb{R}$, 
\begin{equation*}
\psi(r) =C_1 \exp( \alpha r^{\beta})
\end{equation*}
for all large $r>0$, (\ref{Ric lower}) implies that 
\begin{equation*}
\mathrm{Ric} \geq -
 \alpha \beta r^{\beta -2} \big( (\beta -1) +\alpha \beta r^{\beta} \big).
\end{equation*}
In particular,  if $\alpha >0$ and $\beta>2$, then
\begin{equation*}
\mathrm{Ric} \geq -C^\prime r^{2\beta-2}
\end{equation*}
for some $C^\prime >0$ and  all large $r>0$, which fails the variable lower Ricci bound (\ref{Ric-825}).

\subsubsection{Partition of model manifolds}
In this section, we give an example of the partition of the model manifold $(M, g_{\psi})$ satisfying 
conditions {\bf{(B)}} and 
{\bf{(C)}}. 
First of all, let us construct a partition of $\mathbb{S}^{n-1}$ by induction in dimension. 
We first consider the case $n=2$, that is, $\mathbb{S}^1\simeq [0, 2\pi)$. For any $K\in \mathbb{N}$, we take
a partition consisting of $K$-pieces given by
\begin{equation}
\left[ 0, \frac{2\pi}{K} \right), ~ \left[ \frac{2\pi}{K} , 2\frac{2\pi}{K} \right),  \cdots , \left[ (K-1) \frac{2\pi}{K}, 2\pi \right).
\label{S^1}
\end{equation}
Next, as a partition of $\mathbb{S}^2$, take a \textit{spherical suspension} (cf. \cite[Section 3.6.3]{BBI01})
of each piece in (\ref{S^1})
as a subset of $\mathbb{S}^2$ and decompose it with same interval $\frac{\pi}{K}$ 
in the extended angle. This partition is nothing but a partition of $\mathbb{S}^2$ by $K$ 
longitude lines
and $K$ latitude lines. 
Noting that for any piece of this partition $X \subset \mathbb{S}^2$, 
\begin{equation*}
\mathrm{diam}(X) \leq \frac{3 \pi}{K}, \quad \mathrm{vol}_{\mathbb{S}^2}(X) \leq \frac{2\pi^2}{K^2}.
\end{equation*}
we repeat this procedure to construct a partition of $\mathbb{S}^{n-1}$. 
Then the given partition $\mathbb{Y}_K$
consisting of $K^{n-1}$ elements satisfies for any $Y\in \mathbb{Y}_K$
\begin{equation*}
\mathrm{diam}(Y) \leq \frac{n\pi}{K}, \quad \mathrm{vol}_{\mathbb{S}^{n-1}}(Y) \leq 2 \left( \frac{\pi}{K} \right)^{n-1}.
\end{equation*}

Now we construct a partition of a model manifold $M=(M, g_{\psi})$ as follows:
Let $\Pi:
M\backslash \{ o \} \rightarrow \mathbb{S}^{n-1}$ be the canonical projection, that is, 
\begin{equation*}
\Pi(r, \theta) =\theta, \quad (r,\theta)\in M\backslash \{ o \}.
\end{equation*}
For $l, m \in  \mathbb{N}$, we set 
\begin{equation*}
A_l (m) = B_{\frac{m}{l}}(o) 
\backslash 
B_{\frac{m-1}{l}}(o), \quad
\psi_l(m)=\max_{\frac{m-1}{l} \leq r \leq \frac{m}{l}}\psi(r). 
\end{equation*}
(Note that $A_l(m)=\emptyset$ if $r_0<\infty$ and $\frac{m-1}{l} \geq r_0$.)

Next we define a partition $\{ X_{k}^{l,m} \}_{k=1,\ldots, K^{n-1} }$ of $A_l(m)$ by
\begin{equation*}
X_{k}^{l, m}=\Pi^{-1}
(Y_k) \cap A_l(m), \quad k=1, \ldots, K^{n-1}.
\end{equation*}
We  note that 
\begin{equation*}
\mathrm{diam}(X^{l,m}_k) \leq \frac{1}{l} +\mathrm{diam}(Y_k)\psi_l(m) 
\leq \frac{1}{l} + \frac{n\pi}{K}\psi_l(m)
\end{equation*}
and 
\begin{equation*}
\mathfrak{m}( X_{k,l,m} ) \leq \frac{1}{l} \mathrm{vol}_{\mathbb{S}^{n-1}}(Y_k) \psi_l(m)^{n-1} \leq 
 \frac{2}{l} \left( \frac{\pi}{K} \right)^{n-1}  \psi_l(m)^{n-1}.
\end{equation*}
Now choose $K=K(l,m):=\lceil l\psi_l(m) n \pi \rceil$. Then for all $k=1, \ldots, K(l,m)^{n-1}$ and 
$l, m \in \mathbb{N}$, we obtain 
\begin{equation*}
\mathrm{diam}(X^{l,m}_{k}) \leq \frac{2}{l}, \quad \mathfrak{m}(X^{l,m}_k) \leq \frac{2}{n^{n-1} l^n}.
\end{equation*}
Consequently,  for each $l\in \mathbb{N}$,  $\mathbb{X}_l:= \{ X_{k}^{l,m} \}_{m\in \mathbb{N}, k=1, \ldots,  K(l,m)}$ is the partition of $M$.
It is easy to check conditions {\bf{(B)}} and {\bf{(C)}}.

\subsubsection{A sufficient condition for {\bf{(A)}}, 
{\bf{(A1)$_{\bm{p}}$}} and
{\bf{(A2)$_{\bm{p}}$}}
}
\label{model (A)}
In this section, we study a sufficient condition on a vector field $b$ on the non-compact model manifold 
$(M, g_{\psi})$ for conditions 
(\ref{main p}), (\ref{Lp cond1}) and  (\ref{main infty-827}) with $\kappa \equiv 1$.
For a smooth vector field $b=b(r, \theta)$ on $M$, let $b^r, b^\theta_1, \ldots, b^\theta_{n-1}$ 
be the coefficients of $b$ in the polar coordinates $(r, \theta^{(1)}, \ldots ,\theta^{(n-1)})$. Namely, they satisfy 
\begin{equation*}
b(r,\theta) =b^r(r,\theta)\frac{\partial}{\partial r}+b^\theta_1(r, \theta )\frac{\partial}{\partial \theta^{(1)}}+ \cdots +
b^\theta_{n-1}(r, \theta) \frac{\partial}{\partial \theta^{(n-1)}}.
\end{equation*}
Then conditions
(\ref{main p}) and 
(\ref{Lp cond1}) 
are rewritten by
\begin{align*}
& (n-1) \frac{\psi^\prime (r)}{\psi(r)} b^r(r,\theta) +\frac{\partial b^r}
{\partial r} 
(r, \theta)
+
\mathrm{div}_{\mathbb{S}^{n-1}}( b^\theta(r, \theta)) \geq  -\gamma
\end{align*}
and $b^r(r,\theta) \geq -C$, respectively, where 
\begin{equation*}
\mathrm{div}_{\mathbb{S}^{n-1}}( b^\theta (r, \theta) ) =\frac{1}{\sqrt{ \det (\gamma)}} \sum_{i=1}^{n-1} 
\frac{\partial}{\partial  \theta^{(i)}} \left( 
\sqrt{ \det (\gamma) } b^\theta_i (r, \theta) \right).
\end{equation*}
Also, condition (\ref{main infty-827}) with $\kappa \equiv 1$ is rewritten by
\begin{equation*}
b^r(r,\theta )+(n-1)\frac{\psi^\prime(r)}{\psi(r)} \geq -Cr.
\end{equation*}

We now study a sufficient condition of $b$ in the cases where 
$\psi$ are polynomial and exponential separately.
\vspace{2mm} \\
{\bf{(I) Polynomial case:}}~For a positive constant $C_1$ and some $\alpha \in \mathbb{R}$, suppose
\begin{equation*}
\psi(r)=C_1 r^{\alpha}
\end{equation*}
for all large $r$. 
In this case, 
for the vector filed $b$ satisfying (\ref{main p}), assume that
\begin{equation}
\mathrm{div}_{\mathbb{S}^{n-1}}( b^\theta (r, \theta)) \geq -C^\prime
\label{lower div}
\end{equation}
for some constant $C^\prime$. For example, this condition is true if $b^{\theta}_1 , \ldots  , b^\theta_{n-1}$ 
are constants for all large $r>0$. Then the condition (\ref{main p}) is rewritten by
\begin{equation}
(n-1) \frac{\alpha}{r} b^r +\frac{\partial b^r}{\partial r}
\geq C^\prime-\gamma=:-\gamma^\prime.
\label{model poly (A)'}
\end{equation}
For example, we consider the case 
\begin{equation}
b^r(r,\theta)=c_0(\theta) +c_1(\theta) r+\cdots + c_k(\theta) r^k,
\label{br-11-24}
\end{equation}
where 
$c_0, \ldots, c_k$ are smooth functions on $\mathbb{S}^{n-1}$.
Then conditions (\ref{lower div}) and  
$$
\left( (n-1)\alpha +l \right)  c_{l}(\theta) \geq 0, \quad \theta \in \mathbb{S}^{n-1}, \quad
l=2,\ldots, k
$$ 
imply (\ref{model poly (A)'}) with some $\gamma^\prime$. Hence condition 
{\bf{(A1)$_{\bm{p}}$}}
holds.
Moreover, if $c_k$ in (\ref{br-11-24}) is positive, 
then $b$ satisfies also (\ref{Lp cond1}) with some constant $C$, which implies condition 
{\bf{(A2)$_{\bm{p}}$}}.

On the other hand, condition (\ref{main infty-827}) is rewritten by
\begin{equation*}
b^r(r, \theta) \geq -Cr -(n-1) \frac{\alpha}{r}.
\end{equation*}
For example, 
if 
\begin{equation}
b^r(r, \theta) \geq -C^\prime r
\label{poly (A)}
\end{equation}
for some $C^\prime >0$,
then $b$ satisfies condition (\ref{main infty-827}), whence the condition {\bf{(A)}} holds.
\vspace{2mm} \\
{\bf{(II) Exponential case:}}
For a positive constant $C_1$ and  some $\alpha, \beta \in \mathbb{R}$, suppose
\begin{equation*}
\psi(r)=C_1\exp(\alpha r^{\beta} )
\end{equation*}
for all large $r$. 
To find a vector field $b$ satisfying 
{\bf{(A1)$_{\bm{p}}$}}
and
{\bf{(A2)$_{\bm{p}}$}}, we assume (\ref{lower div}).
Then condition (\ref{main p}) is rewritten by
\begin{equation}
(n-1) \alpha \beta r^{\beta -1} b^r +\frac{\partial b^r}{\partial r}
\geq -\gamma^\prime.
\label{model exp (A)'}
\end{equation}
For example, a vector filed $b$ with (\ref{lower div}) and $b^r(r, \theta)=c(\theta) r$ with a smooth function $c(\theta)$ on $\mathbb{S}^{n-1}$ 
satisfying 
\begin{equation*}
c(\theta)\alpha \beta \geq 0, \quad \theta \in \mathbb{S}^{n-1}
\end{equation*}
satisfies (\ref{model exp (A)'}), and thus condition 
{\bf{(A1)$_{\bm{p}}$}}
holds.
Moreover, if $c(\theta)$ is positive, then $b$ satisfies also (\ref{Lp cond1}), which implies condition 
{\bf{(A2)$_{\bm{p}}$}}.
 
The condition (\ref{main infty-827}) can be rewritten by
\begin{equation}
b^r(r, \theta)  \geq -Cr -(n-1) \alpha \beta r^{\beta-1} .
\label{exponential b1}
\end{equation}
If $\beta \leq 2$, the leading term in the right-hand side of (\ref{exponential b1}) is $-Cr$. Hence, for example, 
 if
\begin{equation}
b^r(r, \theta) \geq -C^\prime r,
\label{exp (A)1}
\end{equation}
then $b$ satisfies the condition (\ref{main infty-827})
for some positive constant $C^\prime$ and then the condition {\bf{(A)}} holds.

If $\beta >2$, then the leading term in the right-hand side of (\ref{exponential b1}) is 
$-(n-1)\alpha \beta r^{\beta -1}$. Hence, for example, $b$ satisfies the condition (\ref{main infty-827}) 
if 
\begin{equation}
b^r (r, \theta) \geq -(n-1) \alpha \beta r^{\beta-1}.
\label{exp (A)2}
\end{equation}
 In particular, the case where $n=2$,  $\alpha>0$ and $\beta >2$, the vector filed $b$ 
with (\ref{lower div}) and  (\ref{exp (A)2}) gives a new example of 
$\mathcal{A}$ with {\bf{(A)}} without the variable lower Ricci bound (\ref{Ric-825}).
%
%
%
%
\vspace{2mm} \\
\noindent
{\bf Acknowledgements.}
The authors are grateful to Professor Atsushi Kasue for useful discussions on the Laplacian 
comparison theorem. They also thank Professors Atsushi Atsuji and Jun Masamune for 
giving valuable comments. 
The first author was partially supported by JSPS Grant-in-Aid for Scientific
Research (C) No. 17K05215, (C) No. 22K03280 and (S) No. 22H04942.
The second author was partially supported by JSPS Grant-in-Aid for Scientific
Research (C) No. 17K05300, (C) No. 20K03639 and (B) No. 21H00988.

\end{document}